\definecolor{purple}{rgb}{0.65, 0, 1}
\definecolor{orange}{rgb}{1,.5,0}
\def\e{\epsilon}
\def\R{\mathbb{R}}
\def\II{{\rm I\kern-0.5exI}}
\def\III{{\rm I\kern-0.5exI\kern-0.5exI}}
\def\argmin{\mathrm{argmin}}
\numberwithin{equation}{section}
\newtheorem{theorem}{Theorem}[section]
\newtheorem{remark}[theorem]{Remark}
\newtheorem{lemma}[theorem]{Lemma}
\newtheorem{definition}[theorem]{Definition}
\newtheorem{proposition}[theorem]{Proposition}
\newtheorem{corollary}[theorem]{Corollary}
\newcommand{\comments}[1]{}
\newcommand{\supp}[0]{\mbox{supp}}
\newcounter{DTheoremCounter}
\newcommand{\pme}{(PME-D)$_m$}
\newcounter{DAppCounter}
\newcounter{claimCounter}
\definecolor{pink}{rgb}{1, 0, 0.7}
\definecolor{purple}{rgb}{0.65, 0, 1}
\begin{document}
\title{Quasi-static evolution and congested crowd transport}
\author{Damon Alexander, Inwon Kim, and Yao Yao}
\address{Dept. of mathematics, UCLA and UW-Madison} 
\date{}
\begin{abstract}
We consider the relationship between Hele-Shaw evolution with drift, the porous medium equation with drift, and a congested crowd motion model originally  proposed by \cite{mrs2}-\cite{mrs1}.  We first use viscosity solutions to show that the porous medium equation solutions converge to the Hele-Shaw solution as $m \to \infty$ provided the drift potential is strictly subharmonic.   Next, using of the gradient flow structure of both the porous medium equation and the crowd motion model, we prove that the porous medium equation solutions also converge to the congested crowd motion as $m\to\infty$. Combining these results lets us deduce that in the case where the initial data to the crowd motion model is given by a \textit{patch}, or characteristic function, the solution evolves as a patch that is the unique solution to the Hele-Shaw problem.  While proving our main results we also obtain a comparison principle for solutions to the minimizing movement scheme based on the Wasserstein metric, of independent interest.

\end{abstract}
\thanks{D. Alexander and I. Kim are supported by NSF DMS-0970072. Y. Yao is supported by NSF-DMS 1104415}
\maketitle
\section{Introduction} 

Let $\Omega_0$ be a compact set in $\R^d$ with locally Lipschitz boundary, and let $\Phi(x): \R^d\to \R$ be a $C^2$ function which satisfies
$$
\Delta\Phi>0\hbox{ in }\R^d. \leqno (\textbf{A1})
$$
For $\Omega_0$ and $\Phi$ as given above, we consider a function $u: \R^d\to \R$, $u(x,t) \geq 0$ solving the following free boundary problem: 
$$
\left\{\begin{array}{lll}
-\Delta u (\cdot,t) = \Delta \Phi &\hbox { in }& \{u > 0\} ;\\ 
V = -\partial_{\nu} u -\partial_{\nu}\Phi &\hbox{ on }& \partial\{u>0\}.
\end{array}\right.\leqno(P)
$$
Here $\nu_{x,t}$ is the outward normal vector of the set $\Omega_t(u):=\{x: u(x,t) >0\}$ at $x\in \Gamma_t(u):=\partial\Omega_t(u)$, and $V$ denotes the outward normal velocity of $\Omega_t(u)$ at $x\in\Gamma_t(u)$. 

In terms of $u$, $\nu = -\nabla u/|\nabla u|$ and thus one can write down the second condition of $(P)$ as 
$$
u_t = |\nabla u|^2 + \nabla u\cdot\nabla\Phi \quad \hbox{ on } \partial\{u>0\},
$$
given that $|\nabla u| \neq 0$ at the boundary point.  Note that the free boundary velocity $V$ may be positive or negative depending on the behavior of $\Phi$ on $\Gamma(t)$.  Consequently $\Omega_t(u)$ may expand or shrink over time (see Figure 1). Indeed formal calculations based on $(P)$ yield that $\Omega_t$ preserves its volume over time. 
The initial data $u(x,0)= u_0$ is the unique function satisfying 
\begin{equation}\label{initial}
-\Delta u_0 = \Delta\Phi \hbox{ in the interior of } \Omega_0, \quad u_0 = 0 \hbox{ on } \Omega_0^C.
\end{equation}

Note that, due to $(\textbf{A1})$, $u_0$ is positive in $\Omega_0$ and thus \eqref{initial} is well-defined. Still, even starting from a smooth domain $\Omega_0$, the solution of $(P)$ can develop finite-time singularities as its support goes through topological changes such as pinching and merging, and thus it is necessary to consider a notion of weak solutions. We will use the notion of viscosity solutions for $(P)$, see section 2 for definitions and properties of $u$.  Let us mention that the usual variational inequality formulation for weak solutions of Hele-Shaw flow,  introduced by \cite{ej}, does not apply here due to the non-monotonicity of solutions in time variable.
 
\begin{figure}
\begin{tikzpicture}
\draw plot [smooth cycle, tension = 0.8] coordinates {(2,0) (4,2) (6,0) (5,-2) };
\draw plot [smooth cycle, tension = 0.8] coordinates {(-1,0) (0,2) (-2, 1) (-4,2) (-4.4, -1) (-2, -2.5) };
\draw [->, thick, color=blue] (-1,0) to (-0.5,-0.2);
\draw [->, thick, color=blue] (0,2) to (-0.25,1.7);
\draw [->, thick, color=blue] (-4,2) to (-3.65,1.6);
\draw (-3.3,2.3) node {\textcolor{blue}{\large{$V = -\partial_\nu u - \partial_\nu \Phi$}}};
\draw (-3,0) node {\large{$u(\cdot,t)>0$}};
\draw (-2,1.6) node {\large{$u=0$}};

\draw [-latex, very thick] (0,0) -- node[pos = 0.45, yshift = 0.3cm, black] {$t\to\infty$} (1.7,0) ;
\draw (4,0) node {\large{supp~$s(x)$}};
\draw (4,-2.3) node {\large{equilibrium profile}};
\draw (4.2,-2.8) node {\large{$s(x) = (C-\Phi(x))_+$}};

\end{tikzpicture}
\caption{Evolution of the positive phase, converging toward the equilibrium}
\end{figure}
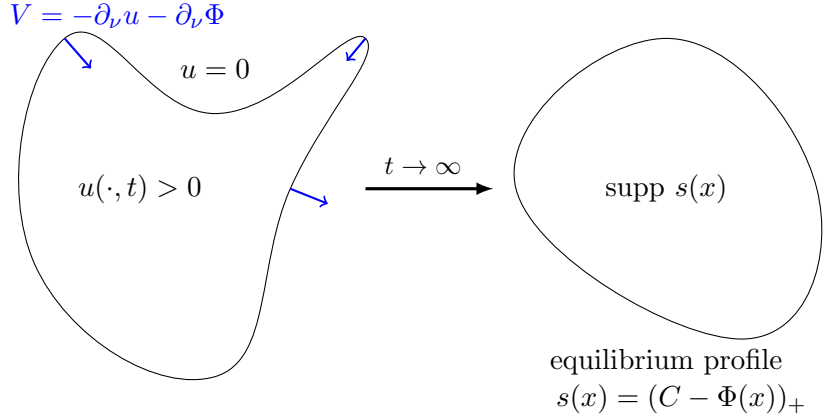

\medskip

In the context of fluid dynamics, the problem $(P)$ describes a flow in porous media. Indeed if we denote by $u=u(x,t)$ the density of a fluid and define the velocity of the fluid as
\begin{equation}\label{fluids}
\vec{U} = -\nabla\Phi - \nabla u,
\end{equation}
where $\nabla\Phi$ is the external velocity field given by $\Phi$,  then \eqref{fluids} and the incompressibility condition 
\begin{equation}\label{incompressible}
\nabla\cdot\vec{U}=0
\end{equation}
yields $(P)$.

 When $\Phi=0$ and there is a  fixed boundary in the positive phase through which the fluid is injected, \eqref{fluids} and \eqref{incompressible}  yield the classical one-phase Hele-Shaw problem \cite{hs}.
 In this article however, our goal is to derive $(P)$ from a model problem in crowd motion with hard congestion, as described below.

\subsection{ A model in congested crowd motion}
 
Let us recall the transport problem with density constraint, introduced in \cite{mrs2}-\cite{mrs1}. Formally the problem can be written as the following: we look for a solution $\rho:\R^d\times [0,\infty) \to [0,\infty)$ satisfying
\begin{equation}\label{transport}
\rho_t +\nabla\cdot (\rho \nabla\Phi) = 0  \hbox { if } \rho<1, \hbox{ and } \rho \leq 1\hbox{ for all times.}
\end{equation}

The density constraint is natural in many settings, and it describes motion of congested individuals.  We refer to the articles \cite{mrs2, mrs1, s} for applications and mathematical formulations of the problem \eqref{transport}.  More rigorously, the problem can be written as 
\begin{equation}\label{weak_eqn}
\rho_t + \nabla\cdot(\rho\mathbf{u})=0, \quad \mathbf{u} = P_{C_\rho} \nabla\Phi,
\end{equation}
where $P$ is the projection operator and $C_\rho$ is the space of $L^2$-admissible velocity fields which do not increase $\rho$ on the saturated zone $\{\rho=1\}$. We refer to \cite{mrs1} for further description of $C_{\rho}$. Due to the low regularity of the velocity field $\mathbf{u}$ and the non-continuous dependence of the operator $P_{C_{\rho}}$ with respect to $\rho$, classical methods to study transport equations do not apply to \eqref{weak_eqn}. Indeed the uniqueness of solutions for \eqref{weak_eqn} is an open question, and is probably false without further conditions on the solution, given its hyperbolic nature.

In \cite{mrs1}, the authors study the connection between the PDE \eqref{weak_eqn} with $\rho_\infty$, which is the gradient flow of the following functional $E_\infty$ with respect to the 2-Wasserstein distance:
\begin{equation}\label{original}
E_{\infty}[\rho]:=\begin{cases}
\int_{\mathbb{R}^d} \rho(x)\Phi(x) dx & \text{ for }\|\rho\|_\infty \leq 1\\
+\infty & \text{ for } \|\rho\|_\infty > 1.
\end{cases}
\end{equation}
Further, they prove that when $\Phi$ is $\lambda$-convex, the gradient flow solution $\rho_\infty$ is a weak solution for \eqref{weak_eqn}. However, the full characterization of the solution and further qualitative properties of the solution remain open due to the lack of available methods to study \eqref{weak_eqn}.  The connection between $\rho_{\infty}$ and $(P)$ has been hinted, but only formally in the context of particle velocity.

\medskip

\begin{figure}
\begin{tikzpicture}\Large
  \matrix (m) [matrix of math nodes,row sep=3em,column sep=12em,minimum width=3em] {
  	 &\mbox{HS-D}\\
	\mbox{PME-D}   &\\
   &\rho_\infty  \\};
  \path[-stealth]
	(m-1-2) edge [double distance = 2pt, <->] node[right] {equal a.e.} (m-3-2)     
	    (m-2-1) edge [thick, ->, >=angle 90]
	    	node [sloped,above] {\small{locally uniformly as $m \to \infty$}}
		node [sloped, below] {\small{Theorem 1.1 (a)}}
	    (m-1-2)
	   
	    (m-2-1) edge [thick, ->, >=angle 90]
	    	node [sloped, above] {\small{ in  $W_2$ distance as $m \to \infty$}} 
	   	node [sloped, below] {\small{Theorem 1.1 (b)}}
	    (m-3-2)
	    ;
\node at (4.3,-0.5)  {\small{Theorem 1.2}};
\end{tikzpicture}
\caption{\label{fig:commutDiag}This diagram is a summary of the results of Theorems~\ref{main} and ~\ref{main00}.  Here $\rho_\infty$ denotes the gradient flow solution in the continuum limit, which in particular is a solution of \eqref{weak_eqn}. }
\end{figure}
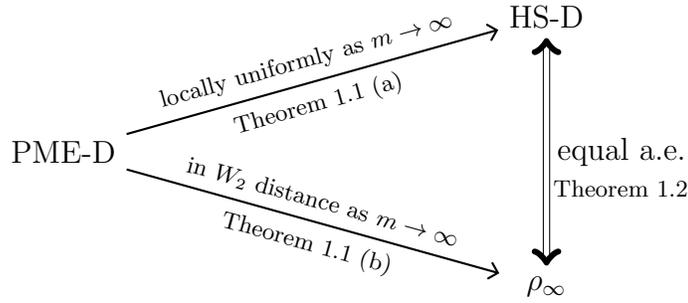

$\circ$ {\it Our contribution: }  In this article, our main focus is on establishing the connection between the free boundary problem $(P)$ and the gradient flow of $E_\infty$ in the setting of {\it patches}, i.e. when the initial data  is given as a characteristic function of a compact set $\Omega_0$, which we denote by $\chi_{\Omega_0}$. Note that since $\Phi$ is assumed to have a positive Laplacian, solutions tend to aggregate and thus we expect that the gradient flow $\rho_\infty(\cdot,t)$ will stay as a characteristic function at all times $t>0$. 

We show that the preservation of patches is indeed the case, and moreover the gradient flow solution $\rho_\infty(\cdot,t)$ indeed coincides with the characteristic function of the set $\Omega_t$, which evolves according to our problem $(P)$ with the initial support $\Omega_0$ (see Theorem~\ref{thm:coincide} below). This result enables us to characterize the evolution of $\rho_{\infty}$ in a unique way and also helps to understand the geometric behavior of $\rho_{\infty}$.  A summary of our results is shown in Figure~\ref{fig:commutDiag}.

In our analysis, the main challenge is the low regularity of $\rho_{\infty}$, since a priori we only know that it is in $C_W([0,T], \mathcal{P}_2(\mathbb{R}^d))$ (see Theorem \ref{ags}(b)). Thus it is rather difficult to directly study the geometric property of $\rho_{\infty}$. Instead of trying to directly show the link between the free boundary problem $(P)$ with the gradient flow $\rho_\infty$,  we use an approximation with degenerate diffusion. It has been formally suggested in \cite{mrs2} and \cite{s} that one could consider approximating the gradient flow of $E_\infty$ by the unconstrained gradient flow problem with the energy
\begin{equation}\label{approx}
E_m[\rho]:= \int \left(\frac{1}{m} \rho^m + \rho\Phi\right) dx.
\end{equation}
 It is well known  (for example, see \cite{o}) that the gradient flow $\rho_m$ associated with $E_m$ solves the porous medium equation with drift
 \begin{equation}\label{pme_drift}
\rho_t  -\nabla \cdot (\nabla (\rho^m)+ \rho\nabla \Phi)=0.
\end{equation}

Let us denote $\rho_m$ as the viscosity solution to \eqref{pme_drift} with initial data $\chi_{\Omega_0}$. We will prove that as $m\to\infty$, $\rho_m$ on the one hand converges to $\chi_{\Omega_t}$ locally uniformly, and on the other hand  converges to $\rho_\infty(\cdot, t)$ in 2-Wasserstein distance. Thus it follows that $\chi_{\Omega_t}$ and $\rho_\infty$ must be equal to each other almost everywhere. The main ingredients of the proof consist of stability results from viscosity solution theory and optimal transport theory, both of which rely strongly on the convexity-type conditions on $\Phi$. We also obtain comparison results and qualitative rates of convergences; see section 1.2 for precise statements. \subsection{Summary of results} 
 
 We are now ready to state our main results. The relevant assumptions, besides $(\textbf{A1})$ in the introduction, are stated in the beginning of section 4. 
 \begin{theorem} \label{main}
Let $\Omega_0$ be a compact set in $\R^d$ with locally Lipschitz boundary, and consider the initial data $u_0$ as given in \eqref{initial}. Then the following holds:

\begin{itemize}
\item[(a)] (Theorem~\ref{convergence}) Assuming \textup{(\textbf{A1})}, there exists a unique family of compact sets $\Omega_t$ in $\R^d$ starting from $\Omega_0$ such that any viscosity solution $u$ of $(P)$ satisfies $\overline{\{u(\cdot,t)>0\}} = \Omega_t$ for all $t>0$. Furthermore, let $\rho_m$ denote the viscosity solution to \eqref{pme_drift} with initial data $\chi_{\Omega_0}$. Then as $m\to \infty$, $\rho_m$ converges to $\bar{\rho}:=\chi_{\Omega_t}$ locally uniformly in $\R^d-\partial\Omega_t$ at each time $t>0$.\\
\item[(b)] (Theorem~\ref{main2}) Assume \textup{(\textbf{A2})} and \textup{(\textbf{A3'})}, and consider $\rho_0\in\mathcal{P}_2(\R^d)$ with $\|\rho_0\|_\infty \leq 1$ and $\int \rho_0 \Phi dx \leq M$. Let $\rho_m(x,t)$ denote the viscosity solution of \eqref{pme_drift} with initial data $\rho_0$. Then there exists $\rho_{\infty}\in C_W([0,T];\mathcal{P}_2(\R^d))$ such that  for any $T>0$, as $m\to\infty$, $\rho_{m}(\cdot,t)$ converges to $\rho_{\infty}(\cdot,t)$ in 2-Wasserstein distance, uniformly in $t$ for   $t\in [0,T]$, with the following convergence rate:
$$\sup_{t\in[0,T]}W_2(\rho_m(t), \rho_\infty(t)) \leq \frac{C(M,T,\|\Delta \Phi\|_\infty)}{m^{1/24}}.$$ 
\end{itemize}
\label{main_theorem}
\end{theorem}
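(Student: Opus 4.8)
\emph{Proof proposal.} The two assertions call for quite different machinery, so I would establish them separately and only at the very end observe that, when specialized to $\rho_0=\chi_{\Omega_0}$, both limits are limits of the \emph{same} sequence $\rho_m$ and must therefore coincide, which is exactly the patch‑persistence statement $\chi_{\Omega_t}=\rho_\infty$ a.e.\ (Theorem~\ref{thm:coincide}). Part (a) is a free‑boundary statement to be handled with half‑relaxed limits of viscosity solutions together with a comparison principle for $(P)$; part (b) is a statement in $(\mathcal P_2(\R^d),W_2)$, to be handled with the $\lambda$‑convex gradient‑flow (EVI) structure of $E_m$ and $E_\infty$ and a quantitative stability estimate.

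\emph{Part (a).} Pass to the pressure variable $p_m:=\tfrac{m}{m-1}\rho_m^{m-1}$, which recasts \eqref{pme_drift} as
\[
\partial_t p_m=(m-1)\,p_m\bigl(\Delta p_m+\Delta\Phi\bigr)+|\nabla p_m|^2+\nabla p_m\cdot\nabla\Phi .
\]
Step 1 (uniform a priori bounds): by the comparison principle for \eqref{pme_drift}, $\|\rho_m(\cdot,t)\|_\infty\le e^{\|\Delta\Phi\|_\infty t}$, and comparison with explicit supersolutions built from the equilibrium profile $(C-\Phi)_+$ confines the support of $\rho_m$ to a fixed compact set on $[0,T]$, uniformly in $m$. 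Step 2 (half‑relaxed limits): set $\overline p:=\limsup^{*}p_m$ and $\underline p:=\liminf_{*}p_m$ and show, by test‑function arguments, that $\overline p$ is a viscosity subsolution and $\underline p$ a viscosity supersolution of $(P)$ — on $\{u>0\}$ the diverging factor $(m-1)p_m$ multiplying $\Delta p_m+\Delta\Phi$ forces $-\Delta u=\Delta\Phi$ in the limit, while on the free boundary the surviving first‑order terms $|\nabla p_m|^2+\nabla p_m\cdot\nabla\Phi$ yield $V=-\partial_\nu u-\partial_\nu\Phi$. Here hypothesis $(\textbf{A1})$ is decisive: strict subharmonicity rules out a mushy region $\{0<\lim\rho_m<1\}$ of positive measure and forces $\rho_m\to1$ on compact subsets of $\{u>0\}$ and $\rho_m\to0$ on compact subsets of its complement (combining $p_m\to u$ with the Step 1 barriers). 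Step 3 (comparison): the comparison principle for $(P)$, proved by doubling variables together with sup‑/inf‑convolutions and again using $(\textbf{A1})$, gives $\overline p\le\underline p$, hence equality; so $p_m$ converges locally uniformly to the unique viscosity solution $u$ of $(P)$, and $\Omega_t:=\overline{\{u(\cdot,t)>0\}}$ gives both the uniqueness of the patch and the locally uniform convergence $\rho_m\to\chi_{\Omega_t}$ away from $\partial\Omega_t$.

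\emph{Part (b).} By Otto's calculus \cite{o}, $\rho_m$ is the $W_2$‑gradient flow of $E_m$; and $E_\infty$ (l.s.c., proper, and — under $(\textbf{A2})$, since the constraint $\{\|\rho\|_\infty\le1\}$ is geodesically convex and $\rho\mapsto\int\rho\Phi$ inherits the $\lambda$‑convexity of $\Phi$ — $\lambda$‑geodesically convex) has a gradient flow $\rho_\infty\in C_W([0,T];\mathcal P_2(\R^d))$; the same $\lambda$‑convexity holds for $E_m$ because additionally $\rho\mapsto\tfrac1m\int\rho^m$ is convex along generalized geodesics, so both flows satisfy an Evolution Variational Inequality with the same $\lambda$. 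The plan is to run the standard EVI stability computation, bounding $\tfrac{d}{dt}\tfrac12W_2^2(\rho_m(t),\rho_\infty(t))+\lambda W_2^2$ by the cross‑term sum $\bigl(E_m[\rho_\infty]-E_m[\rho_m]\bigr)+\bigl(E_\infty[\rho_m]-E_\infty[\rho_\infty]\bigr)=\tfrac1m\int\rho_\infty^m+\bigl(E_\infty[\rho_m]-E_m[\rho_m]\bigr)$, then estimating it and applying Gronwall in $t$. The first piece is $O(1/m)$ since $\|\rho_\infty\|_\infty\le1$; the genuinely hard piece is $E_\infty[\rho_m]-E_m[\rho_m]$, which equals $+\infty$ wherever $\rho_m>1$. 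The crux is therefore a quantitative control of the super‑saturated set $\{\rho_m>1\}$ — both its measure and its overshoot — exploiting that porous‑medium diffusion is violently strong once $\rho>1$ while the compression rate is only $\|\Delta\Phi\|_\infty$; replacing $E_\infty$ by a penalized functional with a large but finite constraint slope and optimizing the interplay of that slope, the overshoot bound, and $1/m$ yields the stated rate $C(M,T,\|\Delta\Phi\|_\infty)\,m^{-1/24}$, whose exponent reflects the several interpolation steps and is presumably not sharp. The comparison principle for the Wasserstein minimizing‑movement scheme — the by‑product advertised in the abstract — enters here, both to propagate the $L^\infty$ bound to the discrete (JKO) level and to identify the JKO limit with the viscosity solution $\rho_m$ of \eqref{pme_drift}.

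\emph{Main obstacles.} In part (a) the delicate point is the viscosity passage to the limit in the strongly degenerate term $(m-1)p_m(\Delta p_m+\Delta\Phi)$: showing that the relaxed limits genuinely solve $(P)$ with the correct free‑boundary velocity and, crucially, \emph{with no mushy region}, which is precisely where $(\textbf{A1})$ is exploited and where the argument is most technical. In part (b) the obstacle is entirely quantitative: the soft $\Gamma$‑convergence $E_m\to E_\infty$ must be upgraded to an explicit algebraic rate, and this hinges on controlling how far and how much $\rho_m$ can exceed the density constraint — the loss from the naive $m^{-1/2}$ one would obtain if $\rho_m\le1$ held down to $m^{-1/24}$ is the price of handling the super‑saturated zone and of chaining the scheme‑level estimates.
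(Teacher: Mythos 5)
Your overall architecture (half-relaxed limits plus comparison for (a); gradient-flow stability for (b); identification of the two limits for the patch statement) matches the paper's, but both parts contain steps that fail as written. In part (a), your Step 3 asserts that the comparison principle for $(P)$ gives $\overline p\le\underline p$ and hence locally uniform convergence of the pressures $p_m$ to the unique viscosity solution of $(P)$. This does not go through: the comparison principle for $(P)$ (Theorem~\ref{thm:cp}) requires \emph{strictly separated} initial data $u(\cdot,0)\prec v(\cdot,0)$, whereas $\overline p$ and $\underline p$ share the same initial datum $u_0$; and the paper explicitly notes that locally uniform convergence of $p_m$ is \emph{not} expected, because the quasi-static problem $(P)$ admits solutions discontinuous in time. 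The paper's actual route is weaker and different: it proves only Hausdorff convergence of the supports, using the $L^1$-contraction of the densities for \eqref{pme_drift} applied to inner and outer approximations $v_0^{\pm,n}\prec u_0\prec v_0^{+,n}$ of the initial data, which squeezes the positivity sets of the extremal sub/supersolution envelopes $U,V$. A second omission: the limsup $u_1$ need not be a subsolution in the required sense because its support does not trace the supports of the $u_m$; the paper must introduce the auxiliary function $\tilde u_1$ (the largest subsolution of $-\Delta v\le\Delta\Phi$ supported in the limit set $\Omega$) and a finite-propagation estimate (Theorem~\ref{pmeBddSpeed}) to push test functions onto actual free boundary points of $u_m$.

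In part (b), your EVI cross-term $E_\infty[\rho_m]-E_\infty[\rho_\infty]$ is $+\infty$ whenever $\rho_m$ overshoots the constraint, and your proposed repair (a penalized functional with large finite slope, then ``optimizing'') is asserted rather than executed; the exponent $1/24$ cannot be recovered without the actual quantitative inputs. What the paper does instead is entirely at the discrete JKO level: (i) the energy inequality $\tfrac1m\int\mu_m^m\le M+1$ plus a Taylor expansion yields the overshoot bound $\int(\mu_m-1)_+\le 2\sqrt{(M+1)/m}$ (Lemma~\ref{lemma:estimate_m_1}); (ii) mollifying the excess mass produces a competitor $\tilde\mu_m$ with $\|\tilde\mu_m\|_\infty\le1$, within $O(m^{-1/4})$ in $W_2$ and with comparable potential energy (Lemma~\ref{lemma:estimate_m_2}); (iii) a contradiction argument using the midpoint of the generalized geodesic between $\tilde\mu_m$ and $\mu_\infty$ gives the one-step bound $W_2(\mu_m,\mu_\infty)\le Cm^{-1/8}$ (Proposition~\ref{prop:one_step_m_infty}); (iv) these are chained over $T/h$ steps via the discrete stability estimate of \cite{ags} and combined with the $O(\sqrt h)$ scheme error, and optimizing $h$ against $m$ produces $m^{-1/24}$. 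You correctly locate the crux (controlling the super-saturated set), but the mechanism that actually controls it — the energy-based $L^1$ overshoot estimate and the $W_2$-cheap truncation — is the missing idea, and without it neither your penalization nor the rate can be justified.
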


Combining Theorem~\ref{main} (a) and (b), we immediately draw the following conclusion for the identification of $\rho_{\infty}$.

\begin{theorem} \label{main00} [Characterization of $\rho_{\infty}$]
Let $\Omega_0$, $\rho_{\infty}$ and $\bar{\rho}$ as given in Theorem \ref{main_theorem}. If \textup{(\textbf{A1})}, \textup{(\textbf{A2})} and \textup{(\textbf{A3'})} hold and if $\rho_0 = \chi_{\Omega_0}$, then $\rho_{\infty} = \bar{\rho}$ a.e.
\label{thm:coincide}
\end{theorem}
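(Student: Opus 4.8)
The plan is to deduce Theorem~\ref{thm:coincide} directly by combining the two convergence statements in Theorem~\ref{main_theorem}, treating it as a matter of uniqueness of limits. Under the standing hypotheses $(\textbf{A1})$, $(\textbf{A2})$ and $(\textbf{A3'})$, and with $\rho_0 = \chi_{\Omega_0}$, both parts of Theorem~\ref{main_theorem} apply to the same sequence $\rho_m$ of viscosity solutions of \eqref{pme_drift}: part (a) gives $\rho_m(\cdot,t) \to \bar\rho = \chi_{\Omega_t}$ locally uniformly on $\R^d \setminus \partial\Omega_t$ for each $t>0$, while part (b) gives $\rho_m(\cdot,t) \to \rho_\infty(\cdot,t)$ in $W_2$, uniformly on compact time intervals. (One must first check that $\rho_0 = \chi_{\Omega_0} \in \mathcal{P}_2(\R^d)$ with $\|\rho_0\|_\infty = 1$ and $\int \rho_0 \Phi\,dx < \infty$, so that the hypotheses of part (b) are met; this is immediate since $\Omega_0$ is compact.)

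First I would fix $t>0$ and pass both limits to the same object. The key step is to show that local uniform convergence off $\partial\Omega_t$, together with the uniform mass bound $\int \rho_m\,dx = |\Omega_0|$ (conservation of mass along \eqref{pme_drift}, or at worst an inequality that one upgrades using tightness from the second moment bound in part (b)), forces $\rho_m(\cdot,t) \to \bar\rho$ in $L^1_{loc}$, hence weakly as measures. Indeed, on any compact set $K$ disjoint from $\partial\Omega_t$ the convergence is uniform, and $|\partial\Omega_t|=0$ — here I would invoke that $\partial\Omega_t = \Gamma_t(u)$ is the free boundary of a viscosity solution, which has Lebesgue measure zero (a standard consequence of the nondegeneracy/regularity theory for $(P)$, or can be arranged from the construction of $\Omega_t$ in Theorem~\ref{convergence}); no mass can concentrate on $\partial\Omega_t$ because of the $L^\infty$ bound $\rho_m \le 1$ that the viscosity solutions inherit. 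Tightness of $\{\rho_m(\cdot,t)\}$ from the uniform second-moment control then promotes $L^1_{loc}$ convergence to convergence in $W_2$ (a probability measure converges in $W_2$ iff it converges weakly and its second moments converge; the latter follows again from the moment bound plus the a.e.\ limit). Therefore $\rho_m(\cdot,t) \to \bar\rho$ in $W_2$ as well.

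Now both $\rho_m(\cdot,t) \to \bar\rho$ and $\rho_m(\cdot,t) \to \rho_\infty(\cdot,t)$ hold in $(\mathcal{P}_2(\R^d), W_2)$, which is a metric space, so limits are unique: $\rho_\infty(\cdot,t) = \bar\rho = \chi_{\Omega_t}$ as elements of $\mathcal{P}_2$, i.e.\ $\rho_\infty(\cdot,t) = \bar\rho$ a.e.\ for each $t>0$. Since this holds for every $t>0$ and both sides lie in $C_W([0,T];\mathcal{P}_2(\R^d))$, the identification holds for all $t\ge 0$; at $t=0$ it is simply the common initial datum $\chi_{\Omega_0}$. This completes the proof.

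The main obstacle is the middle step: reconciling the two \emph{different modes} of convergence. Part (a) is stated only as local uniform convergence away from $\partial\Omega_t$ and says nothing a priori about mass behavior near the free boundary, whereas part (b) lives in $W_2$, which is sensitive to all the mass including what sits near $\partial\Omega_t$. The crux is thus to rule out loss or concentration of mass on $\partial\Omega_t$ as $m\to\infty$ — this is exactly where one needs $|\partial\Omega_t| = 0$ (so the a.e.\ limit $\bar\rho$ has the right total mass $|\Omega_t| = |\Omega_0|$, by volume preservation for $(P)$) together with the uniform bounds $0\le\rho_m\le1$ and the second-moment estimate. Once mass is accounted for, upgrading to $W_2$ and invoking uniqueness of limits is routine. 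If the measure-zero property of $\partial\Omega_t$ is not readily available from Section~2's viscosity theory, an alternative is to argue via the gradient-flow/energy characterization: the $W_2$-limit $\rho_\infty$ necessarily satisfies $\|\rho_\infty(\cdot,t)\|_\infty \le 1$ and, being supported in $\overline{\Omega_t}$ by the local uniform convergence, must have total mass $|\Omega_0|$, forcing $\rho_\infty(\cdot,t) = \chi_{\Omega_t}$ a.e.
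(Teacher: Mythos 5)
Your proposal is correct and follows essentially the same route as the paper, which proves this theorem simply by combining Theorem~\ref{main} (a) and (b): the local uniform convergence of $\rho_m$ to $\chi_{\Omega_t}$ away from $\partial\Omega_t$ and the $W_2$-convergence of $\rho_m$ to $\rho_\infty$ force the two limits to agree a.e. Your additional care in reconciling the two modes of convergence (tightness, the $L^\infty$ bound, and mass accounting near $\partial\Omega_t$) fills in details the paper leaves implicit, and is the right way to do so.
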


As a by-product of our analysis, we also show that a version of comparison principle holds between solutions to the discrete Jordan-Kinderlehrer-Otto (JKO) steepest descent scheme:

\begin{theorem} [Comparison principle, see Theorem \ref{thm:comparison}]  Let $\Phi$ satisfy \textup{(\textbf{A3})}.  For $2<m\leq \infty$, consider the two densities $\rho_{01}\in \mathcal{P}_{2,M_1}(\mathbb{R}^d)$, $\rho_{02} \in \mathcal{P}_{2,M_2}(\mathbb{R}^d)$ ($\mathcal{P}_{2,M_i}$ is as defined in section \ref{subsec:comparison}) with the property $M_1\leq M_2$ and $\rho_{01} \leq \rho_{02}$ a.e. (In the case $m=\infty$, we require in addition that $\|\rho_{0i}\|_\infty \leq 1$ for $i=1,2$).   For  given $h>0$, let $\rho_1, \rho_2$ be the respective minimizers of the following schemes:
\begin{equation}
\rho_i := \underset{\rho \in \mathcal{P}_{2,M_i}(\mathbb{R}^d)} {\argmin}\left[  E_m[\rho] + \frac{1}{2h}W_2^2(\rho, \rho_{0i})\right] \quad\text{ for }i=1,2,
\label{def:rho_i}
\end{equation}
Then $\rho_1 \leq \rho_2$ a.e..
\end{theorem}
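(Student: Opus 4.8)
The plan is to exploit the Euler--Lagrange characterization of the JKO minimizers together with a flow-interchange / comparison argument adapted to the Wasserstein setting. First I would write the optimality conditions for $\rho_i$. Standard first-variation computations (as in Otto's work, and as used throughout the paper) give that there is a Kantorovich potential $\varphi_i$ transporting $\rho_i$ to $\rho_{0i}$ such that, on $\{\rho_i>0\}$,
\[
\frac{m}{m-1}\rho_i^{m-1} + \Phi + \frac{\varphi_i}{h} = \text{const},
\]
while on $\{\rho_i = 0\}$ the left-hand side is $\geq$ the constant; when $m=\infty$ the pressure term $\frac{m}{m-1}\rho_i^{m-1}$ is replaced by the pressure $p_i$ supported on $\{\rho_i=1\}$ coming from the constraint. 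The key structural fact is that each $\rho_i$ solves (in the appropriate weak sense) an obstacle-type elliptic problem $\rho_i = \rho_{0i}\circ(\mathrm{id}+\text{transport})$, or equivalently a one-step implicit Euler discretization of \eqref{pme_drift}; the mass constraints $\mathcal{P}_{2,M_i}$ enter only through admissibility and the condition $M_1\le M_2$ guarantees the schemes are compatible.

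The core of the argument is a contradiction/test-function argument at the level of the discrete scheme. Suppose $\rho_1\le\rho_2$ fails on a set of positive measure. I would consider the competitor pair obtained by "pushing mass down": replace $\rho_1$ by $\rho_1\wedge\rho_2$ and $\rho_2$ by $\rho_1\vee\rho_2$ (truncating back to mass $M_i$ if necessary — this is where $M_1\le M_2$ is used). The internal energy $\int(\frac1m\rho^m+\rho\Phi)$ is unchanged under this rearrangement since $\int f(\rho_1)+f(\rho_2)=\int f(\rho_1\wedge\rho_2)+f(\rho_1\vee\rho_2)$ pointwise. Hence the whole variation is carried by the Wasserstein terms, and it suffices to show
\[
W_2^2(\rho_1\wedge\rho_2,\rho_{01}) + W_2^2(\rho_1\vee\rho_2,\rho_{02}) \;\le\; W_2^2(\rho_1,\rho_{01}) + W_2^2(\rho_2,\rho_{02}),
\]
with strict inequality when $\rho_1\not\le\rho_2$, combined with the fact that $\rho_{01}\le\rho_{02}$ makes $\rho_{01}=\rho_{01}\wedge\rho_{02}$, $\rho_{02}=\rho_{01}\vee\rho_{02}$. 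This is an instance of the sub/super-solution comparison for the Wasserstein JKO step; one proves it by gluing the optimal plans from $\rho_i$ to $\rho_{0i}$ and rerouting mass along the min/max decomposition, using the lattice structure of couplings. An alternative, cleaner route is to differentiate in $h$: show $t\mapsto$ (solution of the scheme with step $t$) preserves the order at first order, then iterate — but the direct competitor argument is more robust.

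The main obstacle I expect is making the min/max rearrangement argument rigorous at the level of the Wasserstein distance: the map $\rho\mapsto\rho\wedge\rho_2$ is not a transport map, so one cannot simply compose plans, and the inequality on $W_2^2$ above requires a genuine construction of a competitor coupling (typically via disintegration of the optimal plans and a measurable selection of how to split the overlap), together with the observation that equality forces $\rho_1\le\rho_2$. Handling the $m=\infty$ case uniformly is an additional subtlety, since there the pressure is only a measure and one must argue that the constraint $\|\rho_i\|_\infty\le1$ is compatible with the min/max truncation — but since $\rho_1\wedge\rho_2$ and $\rho_1\vee\rho_2$ automatically satisfy $\|\cdot\|_\infty\le1$ when $\rho_1,\rho_2$ do, this causes no real trouble. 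I would also need to record that the minimizers $\rho_i$ exist and are unique (standard, by displacement convexity of $E_m$ along generalized geodesics under \textbf{(A3)}), so that the comparison statement is unambiguous.
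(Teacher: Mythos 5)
Your proposal correctly isolates where the difficulty lies, but it does not overcome it: the min/max rearrangement is exactly the step that fails in the Wasserstein setting, and the rest of the argument hinges on it. Two concrete problems. First, the competitors are not admissible: $\int(\rho_1\wedge\rho_2)\,dx = M_1 - \int_{\{\rho_1>\rho_2\}}(\rho_1-\rho_2)\,dx < M_1$ whenever the comparison fails on a set of positive measure, so $W_2(\rho_1\wedge\rho_2,\rho_{01})$ is not even defined (the paper's $W_2$ on $\mathcal{P}_{2,A}$ requires equal masses), and any truncation or mass-restoration destroys the pointwise identity $\int f(\rho_1)+f(\rho_2)=\int f(\rho_1\wedge\rho_2)+f(\rho_1\vee\rho_2)$ that your argument relies on. Second, and more fundamentally, the claimed inequality
\[
W_2^2(\rho_1\wedge\rho_2,\rho_{01}) + W_2^2(\rho_1\vee\rho_2,\rho_{02}) \;\le\; W_2^2(\rho_1,\rho_{01}) + W_2^2(\rho_2,\rho_{02})
\]
is the entire nonlocal content of the theorem; unlike its $L^2$ analogue (which is a pointwise algebraic fact and is why the $L^2$-gradient-flow comparison results of Chambolle and Grunewald--Kim go through), it is not a known lattice property of optimal couplings, and "rerouting mass along the min/max decomposition" is a restatement of what must be proved rather than a proof. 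The Euler--Lagrange conditions in your first paragraph are never actually used downstream.

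The paper's proof takes a genuinely different, local route that avoids global rearrangement altogether. Assuming $\Omega=\{\rho_1>\rho_2\}$ has positive measure, one shows $|T_1^{-1}(\Omega)\setminus T_2^{-1}(\Omega)|>0$ and extracts a set $B$ of positive measure on which $T_1$ sends mass into $\{\rho_2+\delta<\rho_1\le 1/\delta\}$ while $T_2$ sends the \emph{same source mass} into $\{\rho_1\le\rho_2\le 1/\delta\}$. One then perturbs only the piece $\rho_\epsilon=\epsilon\rho_{01}\chi_B$, rerouting it along $T_2$ instead of $T_1$ (and symmetrically for $\rho_{02}$); a conditional-optimality lemma (Lemma~\ref{lemma:part}, asserting that the restriction of an optimal step to a sub-mass is still optimal for the residual functional) plus the Taylor expansion $\frac1m(a+b)^m-\frac1m a^m = a^{m-1}b+O(b^2)$ turns the density gap $\delta$ into a first-order-in-$\epsilon$ contradiction with the minimality of $\rho_1$. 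If you want to salvage your outline, you would need to supply a proof of the $W_2$ lattice-swap inequality with correct mass bookkeeping, which I do not believe is available; otherwise the perturbative scheme above is the viable path.
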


This comparison result is  new in the context of Wasserstein distances and might be of independent interest (see section \ref{subsec:comparison} for more discussions). As a consequence one obtains geometric properties of the discrete solutions such as the confinement property (Corollary \ref{confinement}). 

Lastly, making use of this confinement result, for strictly convex $\Phi$ (but not necessarily uniformly convex), we have the following result concerning the long time behavior of $\rho_{\infty}$ starting from general initial data:
\begin{theorem}[Convergence to the stationary solution, see Theorem~\ref{long_time_convergence}] 
Let $2<m\leq \infty$.  Let $\Phi$ be strictly convex and satisfy \textup{(\textbf{A2})} and \textup{(\textbf{A3')}}.  Assume the initial data $\rho_0 \in \mathcal{P}_2(\mathbb{R}^d)$ has compact support, and in addition satisfies $\|\rho_0\|_\infty \leq 1$ in the case $m=\infty$. For $2<m\leq \infty$, let $\rho_{m}$ be given as the gradient flow for $E_m$ with initial data $\rho_0$, as defined in Theorem \ref{thm:collection}(b). Then as $t\to\infty$, $\rho_{m}(\cdot,t)$ converges to the unique global minimizer $\rho_S$ of $E_m$ exponentially fast in 2-Wasserstein distance.
\end{theorem}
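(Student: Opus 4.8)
\emph{Proof strategy.} The plan is to \emph{localize}. Merely strict convexity of $\Phi$ makes the global displacement–convexity constant of $E_m$ equal to zero, so a direct appeal to the standard exponential–decay estimate for gradient flows is not available. However, once we know that the flow $\rho_m(\cdot,t)$ \emph{and} its target $\rho_S$ both live in a fixed ball $\overline{B_R}$ on which $D^2\Phi$ is uniformly positive definite, $E_m$ is uniformly displacement convex on that ball, and the usual estimate for $\lambda$–geodesically convex functionals with $\lambda>0$ applies.

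First I would invoke the confinement estimate (Corollary~\ref{confinement}), itself a consequence of the Wasserstein comparison principle (Theorem~\ref{thm:comparison}) together with hypotheses (A2) and (A3') (which in particular force $\Phi$ to be coercive with a minimum, say at the origin): there is a radius $R_0>0$, depending only on $\supp\rho_0$ and $\Phi$ but \emph{not} on $m$ or $t$, with $\supp\rho_m(\cdot,t)\subseteq\overline{B_{R_0}}$ for all $t\ge0$; enlarging $R_0$ we may also assume $\supp\rho_S\subseteq\overline{B_{R_0}}$. Fix $R>R_0$. Since $\Phi\in C^2$ has everywhere positive–definite Hessian — this is the form of strict convexity used here — the least eigenvalue of $D^2\Phi$ is a positive continuous function, hence $D^2\Phi\ge\lambda_R I$ on $\overline{B_R}$ for some $\lambda_R>0$.

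Next I would do the displacement–convexity bookkeeping on $\mathcal{P}_2(\overline{B_R})$, which is geodesically convex in $\mathcal{P}_2(\R^d)$ because $B_R$ is convex. The internal term $\tfrac1m\int\rho^m\,dx$ is displacement convex for all $m>1$ by McCann's criterion (so $0$–convex); the potential term $\int\rho\Phi\,dx$ is $\lambda_R$–displacement convex on $\mathcal{P}_2(\overline{B_R})$, since the interpolating measures remain supported in $\overline{B_R}$; and when $m=\infty$ the hard constraint $\{\|\rho\|_\infty\le1\}$ is itself geodesically convex and contributes $0$ (this is where $\|\rho_0\|_\infty\le1$ enters, so that the flow starts at finite energy). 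Hence $E_m$ — with the indicator of $\mathcal{P}_2(\overline{B_R})$ added, and of the $L^\infty$–constraint when $m=\infty$ — is $\lambda_R$–geodesically convex on $\mathcal{P}_2(\R^d)$. The global minimizer $\rho_S$ of $E_m$ (a Barenblatt profile $\rho_S\propto(c-\Phi)_+^{1/(m-1)}$ for $m<\infty$, resp.\ the patch $\chi_{\{\Phi\le c\}}$ for $m=\infty$; cf.\ Theorem~\ref{thm:collection}) is unique by strict convexity of $E_m$ in $\rho$, is compactly supported by coercivity of $\Phi$, lies in $\overline{B_{R_0}}$, and is therefore also the unique minimizer of this localized functional.

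Finally, by the confinement bound the curve $t\mapsto\rho_m(\cdot,t)$ stays in $\mathcal{P}_2(\overline{B_{R_0}})$, strictly interior to $\overline{B_R}$, so it never feels the boundary $\partial B_R$ and thus coincides with the $\mathrm{EVI}_{\lambda_R}$–gradient flow of the localized functional (equivalently, one may first replace $\Phi$ outside $B_R$ by a globally uniformly convex $\tilde\Phi\ge\Phi$ with $\tilde\Phi=\Phi$ on $\overline{B_{R_0}}$, which changes neither the flow nor $\rho_S$). Inserting $\sigma=\rho_S$ into the evolution variational inequality and using $E_m[\rho_S]\le E_m[\rho_m(t)]$ gives $\tfrac12\tfrac{d}{dt}W_2^2(\rho_m(t),\rho_S)\le-\tfrac{\lambda_R}{2}W_2^2(\rho_m(t),\rho_S)$, and Gr\"onwall yields $W_2(\rho_m(t),\rho_S)\le e^{-\lambda_R t/2}\,W_2(\rho_0,\rho_S)$ for all $t\ge0$; the same argument applies verbatim for $m=\infty$. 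I expect the main difficulty to lie squarely in the first two steps: producing a confinement bound uniform in $t$ (and in $m$) that traps the \emph{entire} trajectory strictly inside a ball on which $D^2\Phi$ is uniformly positive definite, so that the effective displacement–convexity constant is a positive $\lambda_R$ rather than $0$. Once that localization is in hand, the exponential decay is essentially a one–line consequence of the EVI.
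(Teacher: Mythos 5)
Your proposal is correct and follows essentially the same route as the paper: confinement of the trajectory via Corollary~\ref{confinement} to a fixed compact set, uniform positivity of $D^2\Phi$ there giving an effective $\lambda>0$, and then the standard exponential estimate against the stationary minimizer $\rho_S$. The paper invokes the Wasserstein contraction of Theorem~\ref{thm:collection}(c) with the localized $\lambda$ rather than writing out the EVI, but these are the same mechanism; your extra care in justifying the localization (replacing $\Phi$ by a globally uniformly convex $\tilde\Phi$ agreeing on the confinement set) is a reasonable way to make rigorous a step the paper states more briefly.
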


\subsection{An outline of the paper}
 
  In section 2  we introduce the notion of viscosity solutions for $(P)$ and state basic properties of solutions. This part is  largely parallel to \cite{k}. In section 3 we show Theorem~\ref{convergence}. A key ingredient in this section is Theorem~\ref{pmelimit}, which identifies properties of the half-relaxed  limits of $\rho_m$ as $m\to\infty$. We point out that such convergence is previously known without the presence of the drift (\cite{gq},\cite{k}), but the presence of the drift and the resulting non-monotonicity of the support $\{\rho(\cdot,t)>0\}$ causes new challenges. In particular the weak formulation used in \cite{gq} based on variational inequalities no longer applies, and thus we proceed with the viscosity solutions approach similar to those taken in \cite{k}.  The argument presented in Theorem~\ref{pmelimit} is  of independent interest: it presents a strong stability argument which would apply to a general class of non-monotone free boundary problems.  Let us point out that the assumption $(\textbf{A1})$ not only justifies $(P)$ but also ensures the non-generacy of solutions of $(P)$ near the free boundary which leads to stability properties (see e.g. the proof of Theorem~\ref{pmelimit}.)

In section 4  we introduce the corresponding discrete-time schemes with free energy $E_m$ and $E_\infty$ respectively, and we study the convergence of the discrete solutions (and continuous gradient flow solutions) as $m\to\infty$.  There are new difficulties in handling the singular limit $m\to\infty$, since the discrete solutions $\rho_{m,h}$  corresponding to free energy \eqref{approx} are not necessarily less than $1$.  Lemma ~\ref{lemma:estimate_m_2} ensures that $\rho_{m,h}$ can be approximated with a density less than $1$  which is close to the original solution in $W_2$ distance and has similar energy $E_m$.  This approximation as well as estimates between $\rho_{m,h}$ and $\rho_{\infty,h}$ obtained in Proposition~\ref{prop:one_step_m_infty} enable us to prove Theorem~\ref{main2}.  Finally,  by combining the uniform convergence results obtained in Theorem~\ref{convergence} and Theorem~\ref{main2}, we conclude with Theorem~\ref{thm:coincide}. Let us mention that the $\Gamma$-convergence approach (\cite{dm} - \cite{ser}) may apply here to derive the convergence of $\rho_m$ to $\rho_{\infty}$ in 2-Wasserstein distance. On the other hand our approach is more quantitative and thus provides, for example, convergence rates in terms of $m$.

Finally, in section 5, for any fixed $2<m\leq \infty$, we present a comparison principle between solutions $\rho_{m,h}$ of the discrete-time scheme corresponding to free energy $E_m$ when $\Phi$ is semi-convex (Theorem~\ref{thm:comparison}). As mentioned above this result  is new for the (discrete) gradient flow solutions in the setting of Wasserstein distances.  As applications of the comparison principle, we discuss some confinement results and the long time behavior of $\rho_{m}$ for convex $\Phi$ in section 5.2 (Theorem~\ref{long_time_convergence}).

\subsection{Remarks on possible extensions}

 For simplicity of the presentation we did not consider the most general setting our approach could handle.  Below we discuss several situations where our approach (partially) extends.
\begin{itemize} 
\item[1.] Our approach would apply, with little modification, to the problem confined in a domain $\Sigma\subset \R^d$ with Neumann boundary data. On the other hand our approach would not apply, at least in its direct form, if one puts an exit (e.g. Dirichlet) condition on parts of $\partial\Sigma$. The challenge is in showing the convergence of discrete solutions, due to the fact that the $\lambda$-convexity of the associated energy no longer holds. On the other hand, the analysis in section 2 and 3 should still go through to yield that the solution of \eqref{pme_drift} converges to the solution of $(P)$ in domain $\Sigma$, with corresponding boundary conditions. \\

\item[2.]  In the case that $\Delta \Phi$ is not necessarily positive, and for general initial data $0\leq \rho_0 \leq 1$, the results in sections 4 and 5 are still valid and one can conclude that the solutions $\rho_m$ of \eqref{pme_drift} uniformly converges to a limiting profile $\rho_{\infty}$ in $2$-Wasserstein distance. In this case, the jammed region $\{\rho_{\infty}(\cdot,t)=1\}$ no longer satisfies finite speed of propagation and may nucleate at times. Due to this reason further characterization of $\rho_{\infty}$ beyond as a weak solution of \eqref{weak_eqn} remains open.\\

\item[3.] As mentioned above, without ($\textbf{A1}$), the unique characterization of the continuum limit $\rho_\infty$ given as the limit of discrete-time solutions  remains open.   In this case we suspect that a rather unstable {\it mushy region} $\{0<\rho_{\infty} < 1\}$ would develop in the limit $m\to\infty$, generating non-uniqueness of $\rho_{\infty}$. A similar difficulty arises in the analysis of  \cite{perthame} where singular limits of degenerate reaction diffusion equations are considered.

\end{itemize}

\section{ On the continuum solutions}

In section 2 and 3 we assume that $\Phi$ satisfies $(\textbf{A1})$. As mentioned before,  we do not expect classical solutions to exist globally either for $(P)$ or \eqref{pme_drift}. Hence to investigate qualitative behavior of solutions we begin by introducing the notion of weak solutions for $(P)$, in our case  the {\it viscosity solutions}. This notion of solutions is particularly useful when we are interested in the stability properties of interface problems. Let us point out that solutions of $(P)$ may be discontinuous due to the quasi-static nature of the evolution, and due to the singularity of the free boundary. Therefore in the definition of viscosity solutions we need to consider semi-continuous functions, in contrast to the definitions of viscosity solutions in section 3. We introduce a definition using comparison with smooth functions similar to the one in \cite{k} and \cite{Pozar}.

\begin{definition}\label{subsolution}
A nonnegative upper-semicontinuous function u defined in $Q := \R^d \times [0,\infty)$ is a viscosity subsolution of $(P)$ with compactly supported initial data $u_0$ if the following hold:
\begin{itemize}
\item[(a)] $u = u_0$ at $t = 0$ and  $\{u_0>0\} = \overline{\{u(x,t) > 0\}}\cap\{t=0\}$;
\item[(b)] $\{u>0\} \cap\{t\leq \tau\} \subset \overline{\{u>0\}\cap\{t<\tau\}}$ for every $\tau>0$ ;
\item[(c)] For every $\phi \in C^{2,1}(Q)$ that has a local maximum of $u - \phi$ in $\overline{\{u>0\}} \cap \{t\leq t_0\}$ at $(x_0,t_0)$,\\
	
\begin{itemize}
	\item[(i)]  if $(x_0, t_0) \in \{u > 0 \}$, $-\Delta \phi(x_0,t_0) \le \Delta \Phi(x_0)$.
	\item[(ii)] if $(x_0, t_0) \in \partial \{u>0\}$, $u(x_0, t_0) = 0$, and if $|\nabla \phi(x_0,t_0)| \ne 0$, then
\[\min(-\Delta \phi - \Delta \Phi, \phi_t - |\nabla\phi|^2-\nabla \phi \cdot \nabla \Phi)(x_0, t_0) \le 0.\] 
	\end{itemize}

\end{itemize}
\end{definition}

Note that the condition (c)(ii) is to ensure that limits of viscosity solutions are viscosity solutions, since the boundary can collapse in a limit and then boundary points of the limiting functions becomes interior points of the limit.

\begin{definition}
A nonnegative lower-semicontinuous function $v$ defined in $Q := R^d \times [0,\infty)$ is a viscosity supersolution of $(P)$ with initial data $v_0$ if the following hold:
\begin{itemize}
\item[(a)] $v = v_0$ at $t = 0$.
\item[(b)] For every $\phi \in C^{2,1}(Q)$ that has a local minimum zero of $v - \phi$ in $\R^d\times (0,t_0]$  at $(x_0, t_0)$,\\

	\begin{itemize}
	\item[(i)]  if $(x_0, t_0) \in \{v > 0\}$, $-\Delta \phi(x_0,t_0) \ge \Delta \Phi(x_0)$.
	\item[(ii)] if $(x_0, t_0) \in \partial \{v>0\}, v(x_0, t_0) = 0$ and if 
\begin{align}\label{bdryTouchingCond}
  |\nabla \phi(x_0, t_0)| \ne 0 \mbox{ and } \{\phi > 0\} \cap \{v >0 \} \cap B(x_0,t_0) \ne \emptyset\mbox{ for some ball } B
\end{align} then
\[\max(-\Delta \phi - \Delta \Phi, \phi_t - |\nabla\phi|^2-\nabla \phi \cdot \nabla \Phi)(x_0, t_0) \ge 0. \]
	\end{itemize}
\end{itemize}
\end{definition}
The condition \eqref{bdryTouchingCond} is to ensure that $\phi$ touches $v$ from below in a non-degenerate way.

Let us define, for a function $h$ in $Q$, the upper and lower semi-continuous envelopes of $h$:
\begin{equation}\label{envelopes}
h^*(x,t): = \lim_{\e\to 0} \sup_{\{|x-y|, |t-s| \leq \e\}} h(y,s), \quad h_*(x,t):= \lim_{\e\to 0} \inf_{\{|x-y|, |t-s| \leq \e\}} h(y,s).
\end{equation}

\begin{definition}
$u$ is a viscosity solution of $(P)$ with initial data $u_0$ if $u_*$ and $u^*$ are respectively viscosity sub- and supersolutions of $(P)$ with initial data $u_0$.
\end{definition}

We will discuss several properties of viscosity solutions which will be used in the main theorem of the article. 

\subsection{Inf- and Sup-convolutions}

Next we introduce regularizations for viscosity solutions of $(P)$, which is by now standard  for free boundary problems (see e.g. \cite{cv}). Given a viscosity subsolution $u$ and $r > 0$, we define
\begin{equation}\label{sup_convolution}
\overline{u}_r = \sup_{B_r(x,t)} u(y,\tau) \quad\hbox{ for } t\geq r
\end{equation}
and likewise given a viscosity supersolution $v$, and $r, \delta > 0$ with $\delta << r$, we define
\begin{equation}\label{inf_convolution}
\underline{v}_r = \inf_{B_{r - \delta t}(x,t)} v(y,\tau) \quad\hbox{ for } t\geq r.
\end{equation}
These are called the sup- and inf- convolutions, respectively, and serve to smooth out viscosity solutions to help analyze the speed of the free boundary.
The following properties of $\overline{u}_r$ and $\underline{v}_r$ are direct consequences of their definitions.
\begin{lemma}\label{convolutions}
\begin{itemize}
\item[(a)]$\overline{u}_r$ is a viscosity subsolution of $(P)$.  Moreover, at each point $(x_0,t_0)\in\partial\{\overline{u}_r>0\}$ there exists a space-time {\it interior ball} $B$ such that 
$$
B\subset\{\bar{u}_r>0\}\hbox{ and }  \overline{B} \cap \{\bar{u}_r=0\} = \{(x_0,t_0)\}. 
$$
\item[(b)]$\underline{v}_r$ is a viscosity supersolution of $(P)$.  Moreover, at each point $(x_0,t_0)\in\partial\{\underline{v}_r>0\}$ there exists a space-time {\it exterior ball} $B$ such that 
$$
B\subset\{\underline{v}_r=0\}\hbox{ and }  \overline{B} \cap \overline{\{\underline{v}_r>0\} }= \{(x_0,t_0)\}. 
$$
\end{itemize}
\end{lemma}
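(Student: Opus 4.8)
\emph{Proof proposal.} My plan is to verify, straight from the definitions of Section~2, that $\overline{u}_r$ is a viscosity subsolution and $\underline{v}_r$ a viscosity supersolution of $(P)$, and then to read off the one-sided ball statements from the structure of a supremum (resp.\ infimum) over space-time balls of a single radius. For (a), conditions (a),(b) of Definition~\ref{subsolution} are immediate from \eqref{sup_convolution}, since $\{\overline{u}_r>0\}$ is, up to its boundary, the open $r$-neighborhood of $\{u>0\}$ in space-time. The substance is condition (c). Given $\phi\in C^{2,1}(Q)$ for which $\overline{u}_r-\phi$ has a local maximum over $\overline{\{\overline{u}_r>0\}}\cap\{t\le t_0\}$ at $(x_0,t_0)$, upper semicontinuity of $u$ and compactness of $\overline{B_r(x_0,t_0)}$ furnish a point $(y_0,\tau_0)$ in that ball realizing $\overline{u}_r(x_0,t_0)=u(y_0,\tau_0)$ (chosen on $\partial\{u>0\}$ when $\overline{u}_r(x_0,t_0)=0$, which is possible since then $u\equiv0$ on $\overline{B_r(x_0,t_0)}$ while $\overline{\{u>0\}}$ meets that ball). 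Putting $e:=(x_0-y_0,\,t_0-\tau_0)$ and $\psi(y,\tau):=\phi\big((y,\tau)+e\big)$, the inclusion $(y,\tau)\in\overline{B_r((y,\tau)+e)}$ (valid as $|e|\le r$) shows $u-\psi$ has a local maximum at $(y_0,\tau_0)$ over $\overline{\{u>0\}}\cap\{t\le\tau_0\}$; feeding this into Definition~\ref{subsolution}(c) for $u$ and using that $\nabla,\Delta,\partial_t$ are unchanged by the translation $e$ gives (c)(i)/(c)(ii) for $\overline{u}_r$ at $(x_0,t_0)$, the only caveat being that $\nabla\Phi,\Delta\Phi$ emerge evaluated at $y_0$ rather than $x_0$. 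Since $\Phi\in C^2$ and $|x_0-y_0|\le r$, I would absorb this by passing to a $C^2$-close perturbation of $\Phi$ (equivalently, replacing $\Phi$ by its own sup-convolution before forming $\overline{u}_r$), which is harmless since $r\downarrow0$ in every later use.

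For the interior ball: if $(x_0,t_0)\in\partial\{\overline{u}_r>0\}$ then $\overline{u}_r(x_0,t_0)=0$, i.e.\ $\mathrm{dist}\big((x_0,t_0),\{u>0\}\big)=r$; picking a nearest point $(z_0,s_0)\in\overline{\{u>0\}}$, setting $\vec n:=\big((z_0,s_0)-(x_0,t_0)\big)/r$, and taking $B:=B_{\rho}\big((x_0,t_0)+\rho\vec n\big)$ for any $\rho\in(0,r)$, the triangle inequality gives $B\subset\{\overline{u}_r>0\}$ and $(x_0,t_0)\in\partial B$, while its equality case forces $\overline{B}\cap\{\overline{u}_r=0\}=\{(x_0,t_0)\}$.

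Part (b) is the mirror image: one checks the supersolution definition for $\underline{v}_r$ by translating a test function that touches $\underline{v}_r$ from below at $(x_0,t_0)$ to a minimizing point $(y_0,\tau_0)\in\overline{B_{r-\delta t_0}(x_0,t_0)}$, and at a free-boundary point of $\underline{v}_r$ the infimum structure returns an \emph{exterior} ball by the same triangle-inequality construction. The new ingredient is the time-shrinking radius $r-\delta t$ in \eqref{inf_convolution}: because $(P)$ runs forward in time, an infimum over a fixed-radius space-time ball would violate part (ii) of the supersolution definition, and letting the radius decrease at a small rate $\delta$ is precisely what makes the shifted test function still touch $v$ from below while preserving the non-degenerate touching requirement \eqref{bdryTouchingCond}.

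I expect this last point in (b) to be the main obstacle: choosing $\delta$ large enough that the translate of $\phi$ meets $\{v>0\}$ near $(y_0,\tau_0)$ in the way \eqref{bdryTouchingCond} demands. This requires controlling the normal speed of the free boundary of $v$, which is available only through the velocity law in $(P)$ and the $C^2$-bound on $\Phi$, and it is here that the strict sign $\Delta\Phi>0$ of (\textbf{A1}) --- and the attendant nondegeneracy of solutions of $(P)$ near the free boundary noted in the introduction --- enters. The analogous clause for $\overline{u}_r$, the $\min$ in Definition~\ref{subsolution}(c)(ii), is softer and needs only the translation bookkeeping described above.
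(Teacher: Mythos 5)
The paper does not actually prove this lemma: it is asserted as a ``direct consequence'' of \eqref{sup_convolution}--\eqref{inf_convolution}, with an implicit pointer to the standard sup-/inf-convolution machinery of \cite{cv} and \cite{k}. Your proposal reconstructs exactly that standard argument, and it is essentially correct: realizing the supremum (infimum) at a point $(y_0,\tau_0)$ of the closed ball, translating the test function by $(x_0,t_0)-(y_0,\tau_0)$ to obtain a touching of $u$ (of $v$) at $(y_0,\tau_0)$, and extracting the interior/exterior ball from the equality case of the triangle inequality all go through as you describe (for the exterior ball in (b) the region $\{|(w,\sigma)-(z_0,s_0)|+\delta\sigma<r\}\subset\{\underline{v}_r=0\}$ is convex with smooth boundary, so it still contains a genuine ball tangent at $(x_0,t_0)$). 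The caveat you flag about $\nabla\Phi,\Delta\Phi$ being evaluated at $y_0$ rather than $x_0$ is indeed the one place where the statement is not a tautology: what the translation argument literally yields is that $\overline{u}_r$ (resp.\ $\underline{v}_r$) is a sub-(super-)solution of $(P)$ with the drift terms perturbed by the oscillation of $\nabla\Phi$ and $\Delta\Phi$ over distance $r$, an $O(r)$ error since $\Phi\in C^2$. That is how the lemma is used downstream (the errors are absorbed in the proof of Theorem~\ref{thm:cp} as $r,\delta\to0$, the $\delta$-shrinkage of \eqref{inf_convolution} supplying the strictness that dominates them), so your ``perturb $\Phi$'' remark is the right idea, but the honest statement of the conclusion is ``subsolution up to $O(r)$ errors'' rather than the exact claim.

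Your closing speculation, however, is off target. Verifying \eqref{bdryTouchingCond} for the translated test function needs neither $(\textbf{A1})$ nor any control on the normal speed of $\partial\{v>0\}$: $|\nabla\psi|=|\nabla\phi|\ne0$ is preserved by translation, and $\{\psi>0\}$ meets $\{v>0\}$ near $(y_0,\tau_0)$ simply because $\psi$ lies below $v$ there and has nonvanishing gradient. Likewise $\delta$ is chosen small ($\delta\ll r$), not large; its purpose is to make $\underline{v}_r$ a strict supersolution for the later crossing argument, not to rescue \eqref{bdryTouchingCond}. The assumption $(\textbf{A1})$ enters only in Lemma~\ref{v_r_fact} (the quadratic barrier) and in the nondegeneracy arguments of Section~3, not in this lemma.
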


Let $e_{n+1}$ denote the vector $(0,...,1)$ in $Q$. The following two lemmas will prove useful in our analysis later. The first lemma can be proven with a parallel proof to that of Lemma 2.5 in \cite{k} and thus we omit the proof.  The second lemma is more interesting and involves ruling out the case of local total collapse of the solution, that is, the solution completely vanishing at a given time.  The proof relies on $(\textbf{A1})$ to build a quadratic barrier subsolution.

\begin{lemma}[$\{\bar{u}_r>0\}$ cannot expand with infinite speed] \label{u_r_fact}
Suppose $(x_0,t_0)\in\partial\{\overline{u}_r > 0\}$. Then the corresponding interior ball cannot have its outward normal as $e_{n+1}$ at $(x_0,t_0)$.
\end{lemma}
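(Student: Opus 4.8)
The plan is to argue by contradiction: suppose $(x_0,t_0)\in\partial\{\overline{u}_r>0\}$ and the space-time interior ball $B$ guaranteed by Lemma \ref{convolutions}(a) has outward normal $e_{n+1}$ at $(x_0,t_0)$. Geometrically this means $B$ is a ball sitting strictly below the hyperplane $\{t=t_0\}$, tangent to it at $(x_0,t_0)$, with $B\subset\{\overline{u}_r>0\}$ and $\overline{B}\cap\{\overline{u}_r=0\}=\{(x_0,t_0)\}$; in particular $\overline{u}_r$ is positive on a whole ball $B_\eta(x_0)\times\{t_0-s\}$ for small $s>0$ but vanishes at $(x_0,t_0)$ itself. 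The idea is that such a configuration forces the support to collapse ``from the side'' with infinite speed, which contradicts finite speed of propagation for the subsolution; equivalently, it contradicts the fact that $\overline{u}_r$ should be positive in a neighborhood of $(x_0,t_0)$ in the time slice $t_0$.

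The key steps are as follows. First, translate the interior-ball condition into a quantitative statement: by the definition of the sup-convolution \eqref{sup_convolution}, $\overline{u}_r(x_0,t_0)=0$ means $u\equiv 0$ on $B_r(x_0,t_0)$; combined with the interior ball, $u$ is positive on a region that ``retreats'' past $(x_0,t_0)$ only at time $t_0$. Second, I would invoke the comparison/stability structure of viscosity subsolutions together with the sup-convolution property: since $\overline{u}_r$ is a subsolution of $(P)$ (Lemma \ref{convolutions}(a)), near a boundary point with a space-time interior ball the boundary moves with finite normal velocity, governed by $V \le -\partial_\nu \overline{u}_r - \partial_\nu\Phi$. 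An interior ball whose outward normal is purely $e_{n+1}$ would correspond to a boundary point where the free boundary velocity in the \emph{spatial} directions is infinite, i.e.\ the support collapses instantaneously along the hyperplane $\{t=t_0\}$, which is incompatible with the subsolution inequality because $-\partial_\nu\overline{u}_r-\partial_\nu\Phi$ is bounded (the gradient of $\overline{u}_r$ is finite, being locally controlled, and $\Phi\in C^2$). The cleanest way to make this rigorous is to construct an explicit smooth test function $\phi$—a shrinking-ball barrier of the form $\phi(x,t)=a\big(r(t)^2-|x-\xi(t)|^2\big)_+$ with $r(t)$, $\xi(t)$ chosen so that $\{\phi>0\}$ has the interior ball $B$ as an interior tangency and moves slower than $(P)$ allows—and check that $u-\phi$ (or $\overline{u}_r-\phi$) attains a local max at $(x_0,t_0)$ with $|\nabla\phi(x_0,t_0)|\ne 0$, so that Definition \ref{subsolution}(c)(ii) applies; the normal-$e_{n+1}$ hypothesis makes $\phi_t(x_0,t_0)$ and $|\nabla\phi(x_0,t_0)|$ decouple in a way that forces both terms in $\min(-\Delta\phi-\Delta\Phi,\ \phi_t-|\nabla\phi|^2-\nabla\phi\cdot\nabla\Phi)$ to be strictly positive, contradicting the subsolution inequality. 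Assumption $(\textbf{A1})$ enters to control the interior term $-\Delta\phi-\Delta\Phi$ and to guarantee non-degeneracy of $\overline{u}_r$ near the free boundary, which is what lets us choose the barrier with $|\nabla\phi|\ne 0$ at the contact point.

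Since the statement is flagged as having ``a parallel proof to that of Lemma 2.5 in \cite{k},'' I would carry out exactly the analogue of that argument: set up the retreating-ball barrier adapted to the drift term $\nabla\Phi$, verify the touching and gradient-nondegeneracy conditions, and derive the contradiction from Definition \ref{subsolution}(c)(ii). The main obstacle, and the only place where real care is needed, is the barrier construction near a point of \emph{spatial} collapse: one must ensure the test surface stays below $\overline{u}_r$ on a full space-time neighborhood while its free boundary has the prescribed interior tangency with normal $e_{n+1}$, and simultaneously check that the drift contribution $\nabla\phi\cdot\nabla\Phi$—which has no sign—does not spoil the strict inequality; this is handled by localizing to a small enough ball where $\nabla\Phi$ is nearly constant and choosing the barrier's curvature large relative to $\|\Delta\Phi\|_{L^\infty}$ on that ball, exactly as in the drift-free case but with the extra bounded lower-order term absorbed into the constants.
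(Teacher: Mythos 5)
The paper never writes this proof out (it defers to Lemma 2.5 of \cite{k}), so the right benchmark is the proof it \emph{does} give for the companion statement, Lemma \ref{v_r_fact}. Measured against that, your proposal has a gap at its logical core. You argue that the interior ball tangent to $\{t=t_0\}$ means the support of $\overline{u}_r$ ``collapses instantaneously,'' and that this ``contradicts finite speed of propagation for the subsolution'' because $-\partial_\nu \overline{u}_r-\partial_\nu\Phi$ is bounded. But the subsolution condition is one-sided: $\min(-\Delta\phi-\Delta\Phi,\ \phi_t-|\nabla\phi|^2-\nabla\phi\cdot\nabla\Phi)\le 0$ only bounds the outward normal velocity from \emph{above}, i.e.\ it forbids infinitely fast expansion; it is perfectly consistent with $V=-\infty$, and nothing in Definition \ref{subsolution} (whose conditions are only tested on $\overline{\{u>0\}}$) prevents a subsolution's support from shrinking or vanishing abruptly. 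That is exactly why this lemma is titled ``cannot \emph{expand} with infinite speed'' while the supersolution counterpart is ``cannot \emph{shrink}.'' The flaw surfaces in your implementation: any admissible test function $\phi\ge\overline{u}_r$ with $\phi(x_0,t_0)=0$ that must be positive on an interior ball sitting below $\{t=t_0\}$ necessarily has $\phi_t(x_0,t_0)\le 0$, so forcing $\phi_t-|\nabla\phi|^2-\nabla\phi\cdot\nabla\Phi>0$ would require $\nabla\phi\cdot\nabla\Phi<-|\nabla\phi|^2$ at the contact point, which you cannot arrange since the direction of $\nabla\phi$ is dictated by the geometry of $\{\overline{u}_r>0\}$, not chosen freely. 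Your assertion that the normal-$e_{n+1}$ hypothesis ``forces both terms in the $\min$ to be strictly positive'' is therefore unsubstantiated, and for the $\phi_t$ term it points the wrong way; you also never establish that a smooth $\phi$ dominating $\overline{u}_r$ on its whole positivity set near $(x_0,t_0)$ with the required derivatives exists at all.

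The step that is actually missing is the transfer from the convolution back to $u$, which is the engine of the paper's proof of Lemma \ref{v_r_fact} and of the argument in \cite{k}. You correctly record that $\overline{u}_r(x_0,t_0)=0$ forces $u\equiv 0$ on $B_r(x_0,t_0)$, but you never use it. The interior ball furnished by Lemma \ref{convolutions}(a) is the ball of radius $r$ centered at a closest point $(z_0,\tau_0)\in\overline{\{u>0\}}$ to $(x_0,t_0)$, and the hypothesis that its normal is $e_{n+1}$ pins $(z_0,\tau_0)$ on the temporal axis through $(x_0,t_0)$ at distance exactly $r$. This converts the ball condition on $\overline{u}_r$ into a configuration for $u$ itself: $u$ vanishes identically on a full space--time ball while $\overline{\{u>0\}}$ touches one of its temporal poles, i.e.\ the support of $u$ crosses a parabolic space--time region in a single instant. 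That configuration is then excluded by a first-crossing-time comparison of $u$ with an explicit classical strict barrier on a small rescaled cylinder around the transferred point --- the mirror image of steps 2--5 in the proof of Lemma \ref{v_r_fact}, where $(\textbf{A1})$ is what makes the quadratic barrier admissible (compare also the construction in Theorem \ref{pmeBddSpeed}). Your sketch gestures at a barrier but places it at $(x_0,t_0)$ against $\overline{u}_r$, rather than at the transferred point against $u$, so the argument as proposed does not close.
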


\begin{lemma} [$\{\underline{v}_r>0\}$ cannot shrink with infinite speed]\label{v_r_fact}
Suppose $(x_0,t_0)\in\partial\{\underline{v}_r > 0\}$. Then the corresponding exterior ball cannot have its outward normal as $-e_{n+1}$ at $(x_0,t_0)$. 
\end{lemma}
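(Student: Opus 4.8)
The plan is to argue by contradiction and construct an explicit quadratic barrier subsolution that the supersolution $\underline{v}_r$ must lie above, thereby preventing the exterior ball at $(x_0,t_0)$ from having outward normal $-e_{n+1}$ — which is precisely the statement that $\{\underline{v}_r>0\}$ vanishes locally "from above" with infinite speed. Suppose, for contradiction, that at $(x_0,t_0)\in\partial\{\underline{v}_r>0\}$ the exterior ball $B$ guaranteed by Lemma~\ref{convolutions}(b) has outward normal $-e_{n+1}$; geometrically this means there is a spatial ball $B_\rho(x_0)$ and a small time interval $(t_0-\sigma, t_0)$ on which $\underline{v}_r(\cdot,t)>0$ somewhere near $x_0$ for $t<t_0$, but $\underline{v}_r \equiv 0$ on $B_\rho(x_0)$ at $t=t_0$ (and just after). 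So the positive phase of $\underline{v}_r$ has a "total collapse" at the spatial point $x_0$ at time $t_0$.

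The key construction is the following. By $(\textbf{A1})$, $\Delta\Phi \ge 2\lambda > 0$ on a neighborhood of $x_0$ for some $\lambda>0$. I will build, for a small parameter $\delta>0$, a smooth function $\phi_\delta(x,t)$ of the form
\[
\phi_\delta(x,t) = a(t)\bigl(R(t)^2 - |x-x_0|^2\bigr)_+ \quad\text{or, more flexibly,}\quad \phi_\delta(x,t) = \frac{\lambda}{2d}\bigl(R(t)^2 - |x-x_0|^2\bigr),
\]
restricted to where it is positive, where $R(t)$ is chosen to be increasing in backward time near $t_0$ (so the support is nonempty for $t<t_0$ and shrinks to the point $x_0$ as $t\uparrow t_0$), at a rate slower than the maximal free-boundary speed allowed by $(P)$. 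One checks: (i) $-\Delta\phi_\delta = \lambda < \Delta\Phi$ in the interior, so the interior (elliptic) subsolution inequality is satisfied; (ii) on the free boundary $\{|x-x_0| = R(t)\}$, since $R(t)$ is decreasing in forward time, the normal velocity $V$ of $\{\phi_\delta>0\}$ is negative, and we verify $\phi_{\delta,t} - |\nabla\phi_\delta|^2 - \nabla\phi_\delta\cdot\nabla\Phi \le 0$ there by taking $R(t_0)=0$ and $|R'|$ sufficiently small — the $|\nabla\phi_\delta|^2$ term is a definite positive quantity on $\{|x-x_0|=R(t)\}$ as long as $R(t)>0$, while $\phi_{\delta,t}$ can be made as small as we like; (iii) $\phi_\delta$ has vanishing (or strictly negative, after a small downward shift) trace on the spatial ball where $\underline{v}_r = 0$ at $t=t_0$. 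Hence $\phi_\delta$ is a (smooth, classical) subsolution of $(P)$ on a space-time neighborhood of $(x_0,t_0)$ lying in $\{t\le t_0\}$.

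Now I compare $\phi_\delta$ against $\underline{v}_r$. At time $t = t_0 - \sigma$ (slightly in the past) and on the lateral parabolic boundary of the neighborhood, the positive set of $\underline{v}_r$ contains the support of $\phi_\delta$ — here I use that the exterior ball has normal exactly $-e_{n+1}$, so the positive phase of $\underline{v}_r$ near $x_0$ for $t$ just below $t_0$ covers a full spatial neighborhood shrinking only as $t\uparrow t_0$ — and by choosing the amplitude of $\phi_\delta$ small, $\phi_\delta \le \underline{v}_r$ on the parabolic boundary. Since $\underline{v}_r$ is a viscosity supersolution (Lemma~\ref{convolutions}(b)) and $\phi_\delta$ is a classical subsolution, a comparison argument between $\phi_\delta$ and $\underline{v}_r$ — of the same kind used throughout this viscosity framework, testing the first touching point — forces $\phi_\delta \le \underline{v}_r$ throughout the neighborhood, in particular at points $(x,t_0)$ with $|x-x_0|$ small but positive where $\phi_\delta > 0$. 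But $\underline{v}_r(\cdot, t_0) \equiv 0$ on $B_\rho(x_0)$ by our collapse assumption, a contradiction. Therefore no such exterior ball with normal $-e_{n+1}$ can exist.

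The main obstacle, and the place where $(\textbf{A1})$ is genuinely used, is step (ii): getting the quadratic barrier to be an \emph{honest} subsolution of the parabolic free-boundary condition while simultaneously having its support collapse to a point at $t_0$. The tension is that a rapidly collapsing support forces $\phi_{\delta,t}$ to be large and positive near the boundary, which fights the inequality $\phi_{\delta,t} \le |\nabla\phi_\delta|^2 + \nabla\phi_\delta\cdot\nabla\Phi$; one must balance the collapse rate $|R'(t)|$ against the lower bound on $|\nabla\phi_\delta|$ (which itself degenerates as $R(t)\to 0$). The strictly positive Laplacian assumption is what lets the barrier have a definite, non-degenerate gradient on its free boundary (non-degeneracy of solutions of $(P)$, as remarked after Lemma~\ref{convolutions}), making the balancing possible; choosing the amplitude $a(t)$ and radius $R(t)$ with $R(t) \sim c(t_0-t)^{1/2}$ or similar, and $\delta$ appropriately, should close the estimate. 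Care is also needed to ensure the comparison principle applies near a boundary-collapse point, but this is exactly the scenario the condition \eqref{bdryTouchingCond} in the definition of supersolution, together with Lemma~\ref{v_r_fact}'s sibling Lemma~\ref{u_r_fact}, is designed to handle.
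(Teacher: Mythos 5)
Your overall strategy --- argue by contradiction, build a quadratic barrier from $(\textbf{A1})$, and run a first-crossing comparison against the supersolution --- is the same as the paper's, but the execution has a fatal flaw. You design the barrier $\phi_\delta$ so that its support collapses to the single point $x_0$ at exactly $t=t_0$ (you take $R(t_0)=0$). Then $\phi_\delta(x,t_0)=\frac{\lambda}{2d}\bigl(0-|x-x_0|^2\bigr)\le 0$ for all $x$, so the conclusion $\phi_\delta\le \underline{v}_r$ is perfectly consistent with $\underline{v}_r(\cdot,t_0)\equiv 0$ near $x_0$, and the contradiction you invoke at ``points $(x,t_0)$ with $\phi_\delta>0$'' is vacuous: no such points exist. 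Indeed this clashes with your own item (iii), where you require the barrier to have vanishing trace at $t_0$. A barrier that imitates the forbidden instantaneous collapse cannot obstruct it. What is needed is a barrier that \emph{persists}: one whose free boundary moves with finite (indeed arbitrarily small) speed and which therefore remains strictly positive on an open set at the collapse time, precisely where the supersolution has already vanished. The paper's $\varphi=\alpha(1-t/5-x^2/2)$ does exactly this, with $\varphi(0,1)=4\alpha/5>0$ at the point where the rescaled supersolution $\omega$ is zero. Relatedly, the ``tension'' you describe in step (ii) is misdirected: a shrinking support makes $\phi_{\delta,t}<0$ near the free boundary, which helps, not hinders, the subsolution inequality $\phi_{\delta,t}\le|\nabla\phi_\delta|^2+\nabla\phi_\delta\cdot\nabla\Phi$; the genuine difficulty was never there.

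There is a second gap in your geometric setup. The exterior ball of $\{\underline{v}_r>0\}$ with outward normal $-e_{n+1}$ constrains only the \emph{zero} set of $\underline{v}_r$; it does not yield your claim that ``the positive phase of $\underline{v}_r$ near $x_0$ for $t$ just below $t_0$ covers a full spatial neighborhood,'' which you need in order to place the barrier below the supersolution on the parabolic boundary. The paper obtains the necessary room by passing back to $v$ itself: by the definition \eqref{inf_convolution}, positivity of $\underline{v}_r$ at points approaching $(x_0,t_0)$ from below forces $v>0$ on an entire space-time ball $B_1$ of radius $r-\delta t_0$ centered at $(x_0,t_0)$, with $v$ vanishing at the top point $(x_0,t_1)$ of that ball; the crossing argument is then run against a rescaling of $v$ on a cylinder whose bottom lies strictly inside $B_1$, where $v\ge\epsilon>0$ by lower semicontinuity. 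Once you (i) replace the collapsing barrier by a persistent, slowly shrinking one and (ii) justify the positivity region by returning to $v$ via the inf-convolution, your argument becomes the paper's proof.
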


\begin{proof} 
1. Suppose that $\{\underline{v}_r>0\}$ has an exterior ball with outward normal $-e_{n+1}$ at a point $(x_0,t_0)$.  Then at $(x_0,t_1)$, $v$ will have an interior ball $B_1$ centered at $(x_0,t_0)$ where $t_1 - t_0 = r - \delta t_0$ and $B_1$ has outward normal $e_{n+1}$ at $(x_0,t_1)$.
\newline\newline
2.  Fix a number $\lambda$ satisfying 
\[\lambda < \frac{1}{5 \max_{B_2(x_0,t_0)} |\nabla \Phi|},\quad \lambda< 1, \quad \lambda << r - \delta t_0.\] 
3.  We define
\[\omega(x,t) := v\left(\lambda x +x_0, \lambda^2 (t - 1)+t_1\right)\]
This serves to map the cylinder
\[C_0 := \{(x,t) : |x - x_1| < \lambda, t_1 - \lambda^2 < t < t_1\}\]
 to the cylinder $C := \{|x| < 1\}\times [0,1]$.  Then $\omega$ is a viscosity solution of a re-scaled version of $(P)$:
\[
\left\{\begin{array}{lll}
  \Delta \omega + \lambda^2 \Delta \Phi_1= 0 & \hbox{ in }& \{\omega > 0\};\\
   V= -\partial_{\nu}\omega -\lambda\partial_{\nu}\Phi_1 & \hbox{ in }& \partial\{\omega > 0\},
\end{array}\right.
\]
where $\Phi_1$ is a rescaled and recentered version of $\Phi$.  By our choice of $\lambda$, the bottom of $C$ is strictly contained in $B_1$, and so by lower semi-continuity we can find $\epsilon > 0$ satisfying $\omega > \epsilon$ at $t = 0$.  
\newline\newline
4.  We construct our barrier.  Define
\[\varphi:= \alpha(1 - t / 5 - x^2/2)\]
where we choose $\alpha>0$ so that $ \alpha < \min(\inf_{C} \Delta \Phi_1, \epsilon)$.  Then $-\Delta \varphi = \alpha < \Delta \Phi_1$, and on the bottom of $C$, $\varphi < \epsilon < \omega$.  On the sides of $C$, $\varphi < 0 < \omega$, so $\varphi < \omega$ on the parabolic boundary of $C$. However, $\varphi(0,1) = 4\alpha/5 > 0 = \omega(0,1)$, so they eventually cross.
\newline\newline
5.  We examine their crossing.  To this end, we define $T$ to be the first crossing time of $\varphi$ and $\omega$:
\[ T := \inf \{t \ge 0 | \mbox{ there exists } x \in C \mbox { s.t. } \omega(x,t) -\varphi(x,t) < 0\}\]
Then we can find a sequence $(x_n, t_n)$ with $t_n \downarrow T$ and 
\[\omega(x_n,t_n) - \varphi(x_n,t_n) \le 0\]
  Now we are in a compact set so we can suppose that $x_n \to \bar{x} \in C$, and since $\omega$ is lower semi-continuous, we must have that 
\[\omega(\bar{x},T) - \varphi(\bar{x},T) = -\beta \le 0\]
Then $(\bar{x},T)$ must be in the parabolic interior of $C$.  

The fact that this is a local minimum of $\omega - \phi$ follows since it is the first time $\omega$ and $\phi$ cross.  We are done now because $-\Delta \varphi(\bar{x},T) = \alpha < \Delta \Phi_1(\bar{x},T)$ and
\begin{align*}
  \varphi_t - |\nabla \varphi|^2 - \lambda \nabla \varphi \cdot \nabla \Phi_1 &= -\alpha / 5-\alpha^2 x^2 - \lambda \alpha x \cdot \nabla \Phi_1 < 0
\end{align*}
where the final inequality comes from our assumption on $\lambda$.
\end{proof}

\subsection{Comparison principle} 

The central property of the viscosity solution theory is in the comparison principle, which we state below. The proof is mostly parallel to that of \cite{k}, and thus we only sketch the outline of the proof.

We say two functions $u,v:\R^d\to \R^+$ are {\it strictly separated}, denoted by $u\prec v$,  if 
$$
u <v\hbox{ in }\overline{\{u>0\}}\quad\hbox{ and } \overline{\{u>0\}}\hbox{ is a compact subset of } \{v>0\}.
$$

\begin{theorem}\label{thm:cp}
Let $u$ and $v$ be respectively viscosity sub- and supersolutions of $(P)$. If $u(\cdot,0) \prec v(\cdot,0)$ then $u(\cdot,t) \leq v(\cdot,t)$ for all $t>0$.
\end{theorem}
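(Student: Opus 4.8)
The plan is to follow the standard viscosity-solution comparison argument for Hele-Shaw-type free boundary problems, as in \cite{k}, with the main modifications coming from the non-monotonicity of the support and from assumption $(\textbf{A1})$. First I would reduce the statement to a strict comparison between the sup-convolution $\overline{u}_r$ of $u$ and the inf-convolution $\underline{v}_r$ of $v$: by Lemma~\ref{convolutions} these are again sub- and supersolutions of $(P)$, with the crucial extra feature that every free boundary point of $\overline{u}_r$ carries a space-time interior ball and every free boundary point of $\underline{v}_r$ carries a space-time exterior ball. Since $u(\cdot,0)\prec v(\cdot,0)$, for $r$ small we still have $\overline{u}_r(\cdot,r)\prec \underline{v}_r(\cdot,r)$ (with a quantitative margin), and it suffices to prove $\overline{u}_r \le \underline{v}_r$ for all $t\ge r$ and then let $r\to 0$. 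To get a strict ordering I would also shift: replace $\overline{u}_r$ by $(1-\sigma)\overline{u}_r(x, t-\tau)$ or dilate the spatial variable slightly, so that strict separation is preserved initially and any first touching is a genuine, quantitatively strict crossing; the shift is removed at the end.

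The core is the contradiction argument. Suppose the ordering fails and let
\[
t_0 := \inf\{t\ge r : \overline{u}_r(\cdot,t)\not\prec \underline{v}_r(\cdot,t)\}
\]
be the first crossing time; by semicontinuity and the initial strict separation, $t_0>r$ and there is a touching point $(x_0,t_0)$ where $\overline{u}_r(x_0,t_0)=\underline{v}_r(x_0,t_0)=:a\ge 0$ with $\{\overline{u}_r>0\}\subset\{\underline{v}_r>0\}$ up to $t_0$. There are two cases. \textbf{Interior case:} if $a>0$, then near $(x_0,t_0)$ both functions are positive and $(P)$ reduces to the elliptic identity $-\Delta(\cdot)=\Delta\Phi$ in the positive set; $w:=\underline{v}_r-\overline{u}_r$ is then superharmonic-from-below / subharmonic in the appropriate viscosity sense with a zero interior minimum, and the strong maximum principle (applied in space at time $t_0$, using that the right-hand sides coincide) forces $w\equiv 0$ on a neighborhood, which propagates back in time and contradicts $t_0$ being the first crossing — this is where assumption $(\textbf{A1})$, i.e. $\Delta\Phi>0$, guarantees nondegeneracy so the solution does not vanish identically and the comparison is not vacuous. \textbf{Free boundary case:} if $a=0$, then $(x_0,t_0)\in\partial\{\overline{u}_r>0\}\cap\partial\{\underline{v}_r>0\}$. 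Here I use the interior ball $B$ for $\overline{u}_r$ at $(x_0,t_0)$: its outward normal is some space-time vector $(\nu_x,\nu_t)$, and by Lemma~\ref{u_r_fact} it is not $e_{n+1}$, while by Lemma~\ref{v_r_fact} the exterior ball for $\underline{v}_r$ does not have outward normal $-e_{n+1}$; hence the common boundary has a well-defined finite, nonvertical normal velocity at $(x_0,t_0)$ and $|\nabla\overline{u}_r|, |\nabla\underline{v}_r|$ are comparable and nonzero there (nondegeneracy from $(\textbf{A1})$ again). One then builds a smooth test function $\phi$ out of the interior ball $B$ (a radial barrier vanishing on $\partial B$, solving $-\Delta\phi\le\Delta\Phi$ inside with the correct normal speed), touching $\overline{u}_r$ from above at $(x_0,t_0)$ along $\partial\{\overline{u}_r>0\}$; since $\overline{u}_r \le \underline{v}_r$ just before $t_0$, the same $\phi$ (suitably perturbed, using the condition \eqref{bdryTouchingCond}) touches $\underline{v}_r$ from below at $(x_0,t_0)$. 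Applying the subsolution inequality (c)(ii) for $\overline{u}_r$ and the supersolution inequality (b)(ii) for $\underline{v}_r$ to this $\phi$ yields
\[
\min(-\Delta\phi-\Delta\Phi,\ \phi_t-|\nabla\phi|^2-\nabla\phi\cdot\nabla\Phi)(x_0,t_0)\le 0 \le \max(-\Delta\phi-\Delta\Phi,\ \phi_t-|\nabla\phi|^2-\nabla\phi\cdot\nabla\Phi)(x_0,t_0),
\]
and by choosing $\phi$ so that $-\Delta\phi-\Delta\Phi<0$ strictly (possible by $(\textbf{A1})$ and shrinking the ball), the min is controlled by the velocity term, giving $\phi_t-|\nabla\phi|^2-\nabla\phi\cdot\nabla\Phi\le 0$ from the subsolution side; the strict-separation shift introduced earlier makes the corresponding supersolution inequality strict in the opposite direction, a contradiction.

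I expect the main obstacle to be the free boundary case, and specifically the bookkeeping needed because the support is non-monotone: one must ensure the touching point $(x_0,t_0)$ is neither a point where $\{\overline{u}_r>0\}$ expands vertically (handled by Lemma~\ref{u_r_fact}) nor where $\{\underline{v}_r>0\}$ shrinks vertically (handled by Lemma~\ref{v_r_fact}), so that the normal velocity is finite and the barrier construction is legitimate; in the classical Hele-Shaw setting monotonicity kills one of these automatically, whereas here both lemmas are genuinely needed. A secondary technical point is verifying the touching condition \eqref{bdryTouchingCond} for the supersolution test — i.e. that the barrier's positive set actually overlaps $\{\underline{v}_r>0\}$ near $(x_0,t_0)$ — which again follows from the exterior-ball structure from Lemma~\ref{convolutions}(b) together with the fact, from $(\textbf{A1})$, that $\underline{v}_r$ does not degenerate to zero gradient at the free boundary. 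The interior case is comparatively routine once nondegeneracy is in hand. Finally, letting $r\to 0$ and removing the $\sigma,\tau$ shifts recovers $u(\cdot,t)\le v(\cdot,t)$ for all $t>0$; I would cite \cite{k} for the routine parts of these limiting arguments.
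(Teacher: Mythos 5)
Your overall strategy coincides with the paper's: regularize by the sup- and inf-convolutions of Lemma~\ref{convolutions}, track the first crossing time $t_0$, use Lemmas~\ref{u_r_fact} and~\ref{v_r_fact} to rule out vertical normals of the interior/exterior balls at the touching point, and defer the barrier constructions to \cite{k}. Your explicit treatment of the interior touching case (excluded by the elliptic maximum principle, since at fixed time both functions solve $-\Delta(\cdot)=\Delta\Phi$ in the subsolution's positive set) is a point the paper leaves implicit in ``proceed as in \cite{k}'', and it is correct; the $(1-\sigma)$, $\tau$ shift is an admissible substitute for the strictness the paper extracts from the shrinking radius $r-\delta t$ in \eqref{inf_convolution}.

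The one step that does not work as written is in your free boundary case: from ``$\overline{u}_r\le\underline{v}_r$ just before $t_0$'' you conclude that the test function $\phi$ built to touch $\overline{u}_r$ from above also touches $\underline{v}_r$ from below. That inference is backwards: $\phi\ge\overline{u}_r$ together with $\overline{u}_r\le\underline{v}_r$ gives no ordering between $\phi$ and $\underline{v}_r$, and at $(x_0,t_0)$ the two functions are pinched together with equal value $0$, so a single smooth $\phi$ squeezed between them with $|\nabla\phi|\neq0$ need not exist; testing both viscosity inequalities with one $\phi$ is precisely what one cannot do in general. The paper instead compares $\overline{u}_r$ and $\underline{v}_r$ against \emph{separately constructed} radial barriers adapted to the interior ball $B_1$ and the exterior ball $B_2$, which yields the quantitative gradient estimates \eqref{claim0}, namely $|\nabla\overline{u}_r|\ge m_1+\mu$ and $|\nabla\underline{v}_r|<m_2+\mu$ with $\mu=\nabla\Phi(x_0,t_0)\cdot\nu$; the contradiction then comes from combining these with $m_1\ge m_2$ and the ordering $\overline{u}_r\le\underline{v}_r$ on the common interior spatial ball at $t=t_0$ (a Hopf-type comparison of normal derivatives at $x_0$), not from a common test function. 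With that step replaced by the two-barrier gradient comparison, your outline matches the paper's proof.
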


\textbf{Sketch of the proof}

1. Due to the fact that $u_0 \prec v_0$, applying Definition 2.1 (a)-(b) and the semi-continuities of $u$ and $v$, we have $\bar{u}_r(\cdot,r)\prec \underline{v}_r(\cdot,r)$ for sufficiently small $r>0$.

\medskip

2. We claim that $\bar{u}_r \leq \underline{v}_r$ for all times bigger than $r$, which yields our theorem. Hence suppose not, and define
$$
t_0:= \sup\{t:u_r(\cdot,s) \prec v_r(\cdot,s) \hbox{ for } s\leq t \} <\infty.
$$
One can then proceed as in \cite{k}, using the above lemmas to exclude the possibility that $\bar{u}_r$ and $\underline{v}_r$ cross over each other discontinuously in time,  to show that at $t=t_0$, there is a point $x_0$ such that 
$$
(x_0,t_0)\in\partial\{\bar{u}_r>0\}\cap\partial\{\underline{v}_r>0\}.
$$
Moreover,  there exists an interior ball $B_1$ to $\{\bar{u}_r>0\}$ and an exterior ball $B_2$ to $\{\underline{v}_r>0\}$ at $(x_0,t_0)$ such that
$$
\overline{B_1}\cap \overline{B_2}\cap\{t\leq t_0\} = (x_0,t_0).
$$

Let $(\nu,m_1)$ be the interior normal to the interior ball $B_1$ and $(\nu,m_2)$ be the exterior normal to the exterior ball $B_2$ at $(x_0,t_0)$, with $|\nu| = 1$. Due to the Lemmas ~\ref{u_r_fact} and \ref{v_r_fact},  $m_1$ and $m_2$ are both finite. In particular at $t=t_0$ both the sets $\{\bar{u}_r>0\}$ and $\{\underline{v}_r>0\}$ have the interior space ball $B_1\cap\{t=t_0\}$ with interior normal $\nu$.   Since $\bar{u}_r$ crosses $\underline{v}_r$ from below at $(x_0,t_0)$, we have $m_1 \geq m_2$. Moreover at $(x_0,t_0)$, the support of $\bar{u}_r$ propagates faster than normal velocity $m_1$, and the support of $\underline{v}_r$ slower than normal velocity $m_2$. Formally speaking, we would like to claim from the fact that $\bar{u}_r$ and $\underline{v}_r$ are respectively sub- and (strict) supersolutions of $(P)$ that 
\begin{equation}\label{claim0}
|\nabla\bar{u}_r| \geq m_1+ \mu \hbox{ and } |\nabla\underline{v}_r| < m_2+\mu, \hbox{ where } \mu= \nabla\Phi(x_0,t_0) \cdot \nu.
\end{equation}
From the claim, we deduce a contradiction since $\bar{u}_r\leq \underline{v}_r$ at $t=t_0$ and $m_1 \geq m_2$.

\medskip

3.  To prove \eqref{claim0} in the viscosity sense, we can use appropriate barriers to compare with $\bar{u}_r$ and $\underline{v}_r$, to measure the growth of these functions at $x_0$. This part of the proof is parallel to that of Theorem 2.2 in \cite{k}. Indeed the barriers corresponding to our problem $(P)$ are constant multiples of the ones constructed in Appendix A of \cite{k}. 
\hfill$\Box$

\begin{remark}\textup{
Let us point out that, due to the restriction on the strict separation of the initial data, the above comparison principle does not immediately yield the uniqueness of the solutions for $(P)$. Later in the paper we will derive the uniqueness result (see Theorem~\ref{convergence}),  by showing that $L^1$-contraction holds between the characteristic functions of  the positive sets of the viscosity solutions. }
\end{remark}

\section{Approximation by degenerate diffusion with drift}

As in section 2 we continue to assume $(\textbf{A1})$. Let $\rho$ be a weak, continuous solution of \eqref{pme_drift}, as given in \cite{v}. We define the pressure variable $u$ by 
\begin{equation}\label{pressure}
u:= \frac{m}{m-1} \rho^{m-1}.
\end{equation}
 Then $u$ formally solves 
$$
  u_t = (m-1)u(\Delta u + \Delta \Phi) + |\nabla u|^2 + \nabla u \cdot \nabla \Phi.
  \leqno\mbox{(PME-D)}_m
$$

In \cite{cv} (for $\Phi=0$) and in \cite{kl} it was shown that $u$ is a viscosity solution of (PME-D)$_m$.
For completeness we review the definitions.  First we define a {\it classical solution} of  (PME-D)$_m$  as a nonnegative function $u \in C^{2,1}(\overline{\{u > 0\}})$ that 
\begin{itemize}
\item[(a)] solves (PME-D)$_m$ in $\{u > 0\}$,
\item[(b)] has a free boundary $\Gamma = \partial\{u > 0\}$ which is a $C^{2,1}$ hypersurface, and
\item[(c)] $\Gamma$ evolves with the outer normal velocity $ |\nabla u|+ \eta \cdot \nabla \Phi,$ where $\eta$ is the inward normal of $\Gamma$.
\end{itemize}

We then use the classical solutions as test functions to define viscosity solutions of (PME-D)$_m$.
\begin{definition}
A non-negative continuous function u defined in $Q := \R^d \times (0,\infty)$ is a viscosity subsolution of  (PME-D)$_m$ if for every $\phi \in C^{2,1}(Q)$ that has a local maximum zero of $u - \phi$ in $\{t \le t_0\}$ at $(x_0,t_0)$,
\[(\phi_t - (m-1)\phi(\Delta \phi + \Delta \Phi) - |\nabla \phi|^2 - \nabla \phi \cdot \nabla \Phi)(x_0,t_0) \le 0.\]
\end{definition}

\begin{definition}
A continuous function $v: Q\to \R_+$ is a viscosity supersolution of (PME-D)$_m$ if:
\begin{itemize}
\item[(a)] For every $\phi \in C^{2,1}(Q)$ that has a local minimum zero of $v - \phi$ in $\{v > 0\} \cap \{t \le t_0\}$ at $(x_0, t_0)$,
\[\big(\phi_t - (m-1)\phi(\Delta \phi + \Delta \Phi) - |\nabla \phi|^2 - \nabla \phi \cdot \nabla \Phi\big)(x_0,t_0) \ge 0.\]
\item[(b)] Any classical  solution of (PME-D)$_m$ that lies below $v$ at time $t_1 \ge 0$ cannot cross $v$ at a later time.
\end{itemize}
Finally, $u$ is a viscosity solution of  (PME-D)$_m$ with compactly supported initial data $u_0$ if it is both a viscosity subsolution and supersolution of (PME-D)$_m$ and both $u(\cdot,t)$ and $\{u(\cdot,t)>0\}$ uniformly converge to $u_0$ and $\{u_0>0\}$ as $t\to 0$, respectively, in uniform norm and in Hausdorff distance.
\end{definition}

Let us point out that the above definitions, based on comparison with classical solutions, are essentially in the same spirit as the definition of viscosity solutions of $(P)$ introduced in section 2. 
\subsection{Properties of $u_m$ at the free boundary} 

We remark that the definition of viscosity supersolutions of (PME-D)$_m$ only applies in $\{v > 0\}$ in order to make the viscosity solution notion be equivalent to the idea of weak solutions.  This has the consequence of needing extra effort to analyze the behavior at the free boundary, which is provided by the following lemma.  Its proof is analogous to Lemma 1.7 in \cite{k}, with the difference in the construction of barriers.  

\begin{lemma}\label{PMEDfact}
Let $v$ be a viscosity supersolution (subsolution) of (PME-D)$_m$, and suppose that $\phi$ is a smooth function where $v- \phi$ has a local minimum (maximum) zero in $\overline{\{v > 0\}}$ at $(x_0, t_0) \in \partial\{v > 0\}$ with $t_0>0$.  If $\phi$ satisfies \eqref{bdryTouchingCond} at $(x_0, t_0)$, then
\begin{equation}\label{inequality}
(\phi_t - |\nabla \phi|^2 - \nabla \phi \cdot \nabla \Phi)(x_0, t_0) \ge (\le)\quad  0.
\end{equation}
\end{lemma}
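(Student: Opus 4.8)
\emph{Proof proposal.} The plan is a barrier argument by contradiction, parallel to the proof of Lemma~1.7 in \cite{k}; the only genuinely new ingredient is a barrier adapted to the drift $\Phi$. It suffices to treat the supersolution case: the subsolution case is entirely analogous (one pushes a classical supersolution against $u$ from above, which a viscosity subsolution cannot cross from below, a standard consequence of the test-function definition), and we would omit it. Recall also that when $v-\phi$ has a local minimum zero at $(x_0,t_0)$ as stated, we have $v=\phi=0$ there; thus the term $(m-1)\phi(\Delta\phi+\Delta\Phi)$ is absent at $(x_0,t_0)$, which is why the reduced inequality \eqref{inequality} is the right target.

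Suppose, for contradiction, that $\bigl(\phi_t - |\nabla\phi|^2 - \nabla\phi\cdot\nabla\Phi\bigr)(x_0,t_0) = -2\delta < 0$. Since $|\nabla\phi(x_0,t_0)|\neq 0$ and $\phi,\Phi$ are smooth, there is a space-time neighbourhood $\mathcal{N}$ of $(x_0,t_0)$ on which $|\nabla\phi|\geq c_0>0$ and $\phi_t - |\nabla\phi|^2 - \nabla\phi\cdot\nabla\Phi\leq -\delta$. Hence in $\mathcal{N}$ the zero set $\Gamma_\phi(t):=\{\phi(\cdot,t)=0\}$ is a smooth hypersurface whose normal velocity, measured outward from $\{\phi>0\}$, equals $\phi_t/|\nabla\phi|$, and is therefore strictly smaller than the ``Hele--Shaw velocity'' $|\nabla\phi| + \eta\cdot\nabla\Phi$ ($\eta$ the inward unit normal of $\{\phi>0\}$) by at least $\delta/C$, where $C$ depends on $\sup_{\mathcal N}|\nabla\phi|$. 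Next I would extract from \eqref{bdryTouchingCond} the inclusion $\{\phi>0\}\cap B\subseteq\{v>0\}$: take the ball $B\ni(x_0,t_0)$ provided there, shrunk so that $\{\phi>0\}\cap B$ is connected; this set cannot contain a point $(z,s)\in\partial\{v>0\}$, since then $v(z,s)=0$ while $(z,s)\in\overline{\{v>0\}}$, forcing $v(z,s)\geq\phi(z,s)>0$ by the local-minimum property; as $\{\phi>0\}\cap B$ also meets $\{v>0\}$, connectedness yields the inclusion, and on it $v\geq\phi$. Now shrink $\mathcal{N}$ so that $\mathcal{N}\subset B$.

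The core step is to construct a compactly supported classical solution $w$ of (PME-D)$_m$ on a short interval $[t_1,t_0]$ with $0<t_1<t_0$ close to $t_0$, such that: (i) $\{w(\cdot,t_1)>0\}$ is a thin cap contained in $\{\phi(\cdot,t_1)>0\}\cap\mathcal{N}$ with $w(\cdot,t_1)\leq\phi(\cdot,t_1)$ there, so by the inclusion above $w(\cdot,t_1)\leq v(\cdot,t_1)$ everywhere; and (ii) the front $\partial\{w>0\}$ propagates with the classical normal speed $|\nabla w|+\eta\cdot\nabla\Phi$, with $|\nabla w|$ chosen just below $|\nabla\phi(x_0,t_0)|$, so that the velocity gap $\delta/C$ makes this front overtake $\Gamma_\phi$ and forces $x_0\in\{w(\cdot,t_0)>0\}$, i.e. $w(x_0,t_0)>0$ (choosing the cap thickness smaller than, say, $(t_0-t_1)\delta/(2C)$). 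For the profile one takes a travelling-wave pressure, linear to leading order in the direction $\nu_0=\nabla\phi(x_0,t_0)/|\nabla\phi(x_0,t_0)|$, plus a quadratic correction in the transverse variables absorbing the front curvature and the lower-order term $(m-1)w(\Delta w+\Delta\Phi)$ after freezing $\nabla\Phi(x_0)$ and $\Delta\Phi(x_0)$; the errors coming from the non-constancy of $\Phi$ are $O(\mathrm{diam}\,\mathcal N)$ and hence negligible against $\delta$ once $\mathcal{N}$ is small. (Alternatively, one may invoke short-time solvability of the smooth free boundary problem (PME-D)$_m$ with data the cap $\{w(\cdot,t_1)>0\}$, cf. \cite{cv,kl}; if that produces only a classical subsolution, replace it by the genuine classical solution with the same trace at $t_1$, which still dominates it and hence still reaches $x_0$ by time $t_0$.) With $w$ in hand, property (b) in the definition of viscosity supersolution of (PME-D)$_m$ — a classical solution below $v$ at $t_1$ stays below $v$ — gives $w\leq v$ on $[t_1,t_0]$, whence $0<w(x_0,t_0)\leq v(x_0,t_0)=0$, the desired contradiction.

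The main obstacle is precisely this barrier construction. In contrast to \cite{k}, the barrier must carry the transport term $\nabla w\cdot\nabla\Phi$ and the bulk term $(m-1)w(\Delta w+\Delta\Phi)$, so no translation- or scaling-invariant profile is an exact solution; one must design the curved profile above and control its error terms uniformly on the shrinking neighbourhood $\mathcal{N}$, which is where $(\textbf{A1})$ and the non-degeneracy $|\nabla\phi|\geq c_0$ enter, together with the $C^2$ bound on $\Phi$. A secondary, purely quantitative point is to match the cap thickness against the velocity gap $\delta/C$ and the time $t_0-t_1$, so that the cap fits strictly inside $\{\phi(\cdot,t_1)>0\}\subseteq\{v(\cdot,t_1)>0\}$ while the front still arrives at $(x_0,t_0)$.
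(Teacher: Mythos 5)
Your proposal is correct and follows essentially the same route as the paper: a contradiction argument built on a classical barrier which starts below $\phi$ (hence below $v$) at a slightly earlier time and whose free boundary advances with speed strictly exceeding that of $\{\phi=0\}$, so that it crosses $v$ at $(x_0,t_0)$ and violates the supersolution property. The two points you flag as the main obstacles are handled in the paper by (i) first passing to the inf-convolution of $v$ and removing the regularization at the end, and (ii) delegating the barrier construction to Lemma~\ref{appendix}, which produces a classical \emph{subsolution} with prescribed front speed and gradient from a perturbed Barenblatt profile rather than an exact classical solution.
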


\begin{proof}

First note that the subsolution case by definition is trivial as discussed above, since $\phi(x_0,t_0) = 0$.  Thus we proceed to the supersolution case.  We may set $t_0 = 0$ after a translation.  

Let us fix constants $r, \delta > 0$ and prove the lemma for the inf-convolution of $v$,
\[W(x,t) = \inf_{B_{r -\delta t}(x,t)} v (y,\tau)\]
Then the lemma follows by taking $\delta\to 0$ and then $r\to 0$.

Now suppose that for a smooth $\phi$, $W- \phi$ has a local minimum in $\overline{\{W > 0\}}$ at $(x_0,0) \in \partial\{W > 0\}$, with $\phi$ satisfying \eqref{bdryTouchingCond}. By perturbing $\phi$ we may assume that the minimum is strict.  Let $H$ be the hyperplane tangent to $\{\phi>0\}$ at $(x_0,0)$, with $(\nu,\gamma)$ the inward normal to $H$ with $|\nu| = 1$. Note that $\gamma>-\infty$ from Corollary 2.16 in \cite{kl}.  Let $\alpha = |\nabla\phi|(x_0,0)= \phi_\nu(x_0,0)>0$.  Towards a contradiction, we assume that \eqref{inequality} fails, and so it follows that for some $\sigma>0$
\begin{equation}\label{gammaupperbound}
\gamma=V_\phi = \frac{\phi_t}{\phi_\nu}(x_0,0) < (\alpha-\sigma)+ \nu \cdot \nabla \Phi(x_0).
\end{equation}
Hence $\gamma$ is finite.
Moreover we have
\begin{equation}\label{order}
W(x,t)\geq \phi(x,t) \hbox{ in } B_\eta(x_0)\times [-\eta,0] \quad \hbox{ for } \eta <<1.
\end{equation}

Due to the regularity of $\phi$, there exists a space ball $B_0$ interior to the set $\{x: \phi(x,0)>0\}$ with $x_0\in\partial B_0$. We define $\gamma_1$ as follows:
\[\gamma_1 := 
\left\{
     \begin{array}{lr}
      \gamma + \sigma / 4  & \mbox{ if } \gamma \ge \nabla \Phi(x_0)\cdot \nu\\
      \alpha/2 +  \nabla \Phi(x_0)\cdot \nu  & \mbox{otherwise}
     \end{array}
   \right.
\]
Then we use the result of Lemma~\ref{appendix} to find a classical subsolution $S$ of $(PME-D)_m$ in a neighborhood $B_\eta(x_0) \times [-\eta, \eta]$ that firstly has initial support inside $B_0$, secondly has advancing speed $\gamma_1$ at $(x_0,0)$, and lastly has a parameter $0<\epsilon<\min(\sigma/4, \alpha/4)$ such that $S$ satisfies 
\begin{equation}\label{almost}
\gamma_1 \geq  |\nabla S| + \nu\cdot \nabla \Phi  - \e \hbox{ at } (x_0,0).
\end{equation}
This condition  helps us to show that it initially lies under $\phi$.

 We now claim that $S$ lies under $W$ in $B_{\eta}(x_0) \times [-\eta,0]$ for sufficiently small $\eta$ , which will yield the desired contradiction to the fact that $S$ is a subsolution and $W$ is a supersolution, since $S$ will cross $W$ at $(x_0,0)$.

Due to \eqref{gammaupperbound} and \eqref{almost}, we have 
\begin{align}\label{below}
  |\nabla S| (x_0,0)&< 
\left\{
     \begin{array}{lr}
      \alpha - \frac{\sigma}{2}  & \mbox{ if } \gamma \ge \nabla \Phi(x_0)\cdot \nu\\
       3\alpha/4 & \mbox{ otherwise  }
     \end{array}
   \right. \\
& < \alpha = |\nabla \phi|(x_0,0).
\end{align}

On the other hand, observe that the support of $S$ propagates with the normal speed faster than that of $\phi$ at $(x_0,0)$ due to \eqref{almost}. Due to the regularity of $\phi$ and $S$ and their ordering at $t=0$ it then follows that 
\begin{equation}\label{support:order}
\overline{\{S>0\}}\subset \{\phi>0\} \hbox{ in } B_{\eta}(x_0)\times [-\eta,0]
\end{equation}
if $\eta$ is sufficiently small. From the above two inequalities it follows that $S \leq \phi$ in $B_{\eta}(x_0)\times [-\eta,0]$ if $\eta$ is sufficiently small. We can now conclude using the fact that $\phi\leq W$ in that neighborhood.

\end{proof}

\subsection{Characterization of the half relaxed limits of $u_m$ as $m\to\infty$}

\medskip

Let $\Omega_0$ and $u_0$ as given in the introduction, and let  $u_m$ be the unique viscosity solution to (PME-D)$_m$ with the initial data $u_0$. Recall that $u_m$ is given as the pressure variable of $\rho_m$ by \eqref{pressure}, where $\rho_m$ assumes the corresponding initial data $(\frac{m-1}{m}u_0)^{1/(m-1)}$. Let us then define
\begin{eqnarray*}
  u_1(x,t) &=& \inf_{n \ge 0} \sup_{\substack{ m \ge n \\|(x,t) - (y,s)| < 1/n }} u_m(y,s); \\  
  u_2(x,t) &=& \sup_{n \ge 0} \inf_{\substack{ m \ge n \\|(x,t) - (y,s)| < 1/n }} u_m(y,s). \\
\end{eqnarray*}
Note that the $\{u_m\}$ are uniformly bounded in $m$, as a consequence with comparison with stationary solutions of the form $(C-\Phi(x))_+$ with sufficiently large $C>0$. Hence $u_1$ and $u_2$ are both finite.
 
 Since we cannot guarantee that the support of $u_1$ traces those of $u_m$, we need to define an auxilliary function. Let us define the function 
$$
  \eta(x,t):=\limsup_{\substack{ m \to \infty \\(y,s) \to (x,t) }} \chi_{\{\supp(u_m)\}}(y,s),
$$
and the closure of the support of $\eta$:
$$
\Omega =\overline{\{(x,s): \eta(x,s)>0\}},\quad \Omega(t):= \Omega\cap\{s=t\}.
$$
Finally, let us define the largest subsolution of the Poisson equation $-\Delta w = \Delta\Phi$ supported in $\Omega$:
$$
\tilde{u}_1:= [\sup\{ v:\R^d\times(0,\infty)\to \R\hbox{ such that }  -\Delta v \leq \Delta \Phi \hbox{ and } v =0 \hbox{ outside of } \Omega\}]^*.
$$
Here $f^*$ denotes the upper semicontinuous envelope of $f$, as defined in \eqref{envelopes}. 

Note that then $\tilde{u}_1=0$ outside of $\Omega$ and for each $t>0$, $\tilde{u}_1$ satisfies
$$-\Delta\tilde{u}_1(\cdot,t) \leq \Delta\Phi \hbox{ in }\R^d, \quad -\Delta\tilde{u}_1(\cdot,t) = \Delta\Phi \hbox{ in the interior of } \Omega(t).
$$

 This auxiliary function $\tilde{u}_1$ is indeed the new component of the proof compared to the corresponding theorem in \cite{k}. We point out that $\tilde{u}_1$ is positive in the interior of $\Omega(t)$ due to $(\textbf{A1})$.

\begin{theorem}\label{pmelimit}
Let $u_1$, $u_2$ and $\tilde{u}_1$ be as given above, and let $\Phi$ satisfy $(\textbf{A1})$.
Then $\tilde{u}_1$ is a viscosity subsolution of $(P)$ in $Q$, and $u_2$ is a viscosity supersolution of $(P)$ in $Q$ with initial data $u_0$.
\end{theorem}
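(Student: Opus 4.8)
The plan is to verify the two assertions separately, each by testing against smooth test functions as required by the viscosity solution definitions of section 2, and in both cases transferring information from the approximating family $u_m$ via the half-relaxed limits. For the subsolution claim on $\tilde u_1$: by construction $\tilde u_1$ solves $-\Delta \tilde u_1(\cdot,t) \le \Delta\Phi$ everywhere and $=\Delta\Phi$ in the interior of $\Omega(t)$, so condition (c)(i) of Definition~\ref{subsolution} holds immediately at interior points, and conditions (a), (b) follow from the definition of $\Omega$ as (the closure of the support of) the relaxed limit $\eta$ of the supports $\chi_{\supp u_m}$ together with the convergence of $u_m$ to $u_0$ at $t=0$. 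The only real content is the free boundary condition (c)(ii): if $\phi\in C^{2,1}$ touches $\tilde u_1$ from above at a boundary point $(x_0,t_0)\in\partial\{\tilde u_1>0\}$ with $|\nabla\phi(x_0,t_0)|\ne 0$, I must show $\min(-\Delta\phi-\Delta\Phi,\ \phi_t-|\nabla\phi|^2-\nabla\phi\cdot\nabla\Phi)(x_0,t_0)\le 0$. Suppose not; then $-\Delta\phi > \Delta\Phi$ near $(x_0,t_0)$ (so $\phi$ is a strict supersolution of the elliptic equation, forcing $\{\tilde u_1>0\}$ to stay strictly inside $\{\phi>0\}$ on a backward-in-time neighborhood by maximality of $\tilde u_1$) and simultaneously the boundary of $\{\phi>0\}$ moves strictly slower than $|\nabla\phi|^2+\nabla\phi\cdot\nabla\Phi$ would allow. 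The strategy is to build, for large $m$, a classical subsolution $S_m$ of (PME-D)$_m$ — of the type produced by the barrier lemma cited in the proof of Lemma~\ref{PMEDfact} — which sits below $u_m$ initially (using that $(x_0,t_0)\in\Omega$, i.e. nearby points lie in $\supp u_m$ for a sequence $m\to\infty$, and a nondegeneracy estimate coming from $(\textbf{A1})$ that keeps $u_m$ bounded below on the positive set) yet whose support overtakes $\{\phi>0\}$, hence overtakes $\Omega$, contradicting the definition of $\Omega$ as the relaxed limit of the supports. The quadratic lower barrier built from $(\textbf{A1})$ (as in Lemma~\ref{v_r_fact}) is what guarantees $u_m$ does not locally collapse, so that $\Omega(t_0)$ genuinely carries the limiting positive phase near $x_0$.

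For the supersolution claim on $u_2$: $u_2$ is a half-relaxed (lower) limit of the $u_m$, and the $u_m$ are viscosity solutions (hence supersolutions) of (PME-D)$_m$. The standard half-relaxed limits method for viscosity solutions says such a limit is a supersolution of the ``limiting equation.'' Concretely, if $\phi\in C^{2,1}$ touches $u_2$ from below with a strict local min zero in $\R^d\times(0,t_0]$ at $(x_0,t_0)$, then there are points $(x_m,t_m)\to(x_0,t_0)$ and $m\to\infty$ where $u_m-\phi$ (suitably perturbed) has a local min, and at those points the supersolution inequality for (PME-D)$_m$ reads $\phi_t - (m-1)\phi(\Delta\phi+\Delta\Phi) - |\nabla\phi|^2 - \nabla\phi\cdot\nabla\Phi \ge 0$ (or, at free-boundary points, the weaker $\phi_t - |\nabla\phi|^2-\nabla\phi\cdot\nabla\Phi\ge 0$ from Lemma~\ref{PMEDfact} once \eqref{bdryTouchingCond} is checked — and \eqref{bdryTouchingCond} is exactly the nondegeneracy hypothesis appearing in Definition of supersolutions of $(P)$). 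Splitting into the two cases of Definition of supersolution of $(P)$: if $(x_0,t_0)\in\{u_2>0\}$, then $\phi(x_0,t_0)>0$ and $u_m(x_m,t_m)\to u_2(x_0,t_0)>0$, so the $(m-1)\phi(\Delta\phi+\Delta\Phi)$ term is the dominant one; dividing by $(m-1)$ and sending $m\to\infty$ forces $-\Delta\phi(x_0,t_0)\ge\Delta\Phi(x_0)$, which is condition (b)(i). If $(x_0,t_0)\in\partial\{u_2>0\}$ with $v(x_0,t_0)=0$ and \eqref{bdryTouchingCond} holds, then Lemma~\ref{PMEDfact} applied along the sequence gives $\phi_t - |\nabla\phi|^2 - \nabla\phi\cdot\nabla\Phi \ge 0$ at $(x_0,t_0)$, so the max in (b)(ii) is $\ge 0$. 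The initial data claim, $u_2(\cdot,0)=u_0$, follows from the uniform (in $m$) convergence of $u_m(\cdot,t)$ and of its support to $u_0$, $\{u_0>0\}$ as $t\to 0$ — which in turn uses a barrier argument and $(\textbf{A1})$ to get equicontinuity near $t=0$.

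The main obstacle, as the text itself flags, is the subsolution claim for $\tilde u_1$ — specifically proving the free boundary inequality (c)(ii). The difficulty is that $\tilde u_1$ is defined abstractly as an envelope of Poisson subsolutions supported in $\Omega$, and $\Omega$ is the relaxed limit of the supports of $u_m$, so there is a priori no quantitative control linking ``$\phi$ touches $\tilde u_1$ at a free boundary point'' to ``$u_m$ has a free boundary moving too slowly there.'' Bridging this requires (i) a nondegeneracy/positivity estimate showing $u_m$ stays bounded below away from zero on compact subsets of the interior of $\Omega(t)$, uniformly in $m$ — this is where $(\textbf{A1})$ (strict subharmonicity of $\Phi$) is essential, via the quadratic barriers of the type in Lemma~\ref{v_r_fact} — and (ii) a careful construction of the classical (PME-D)$_m$ subsolution barrier whose normal speed strictly exceeds that of $\phi$ at $(x_0,t_0)$ while still lying below $u_m$ initially, so that its support propagates out of $\{\phi>0\}\supset\Omega$, contradicting maximality in the definition of $\Omega$. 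Managing the $m$-dependence of these barriers (the $(m-1)u\Delta u$ term) uniformly, and handling the subtle case distinction in the definition of $\gamma_1$ seen in Lemma~\ref{PMEDfact} (whether the formal normal velocity lies above or below $\nabla\Phi\cdot\nu$), is the technical heart of the argument.
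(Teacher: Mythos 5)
Your supersolution half is essentially the paper's argument: stabilize the touching point to get minima $(x_m,t_m)\to(x_0,t_0)$ of $u_m-\phi$ lying in $\overline{\{u_m>0\}}$, use the $(m-1)\phi(\Delta\phi+\Delta\Phi)$ term to extract $-\Delta\phi\ge\Delta\Phi$ at interior points, and invoke Lemma~\ref{PMEDfact} when $(x_m,t_m)$ lands on $\partial\{u_m>0\}$. That part is fine.

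The subsolution half has a genuine gap, and the gap sits exactly where you located the difficulty. Your plan is to build, for large $m$, a classical \emph{subsolution} $S_m$ of (PME-D)$_m$ lying below $u_m$ whose support escapes $\{\phi>0\}$, and you justify placing it under $u_m$ by ``a nondegeneracy estimate coming from $(\textbf{A1})$ that keeps $u_m$ bounded below on the positive set.'' No such uniform-in-$m$ lower bound is available near the free boundary: $(x_0,t_0)$ lies on $\partial\Omega(t_0)$, $u_m$ is continuous and vanishes on $\partial\{u_m>0\}$, so on any neighborhood of $(x_0,t_0)$ the $u_m$ are not bounded away from zero, and a fast-moving porous-medium barrier needs a large pressure gradient (hence a large barrier) at its interface, which is incompatible with sitting below $u_m$ there. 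The direction of the comparison is also backwards: under the contradiction hypothesis $\phi_t-|\nabla\phi|^2-\nabla\phi\cdot\nabla\Phi>0$, the set $\{\phi>0\}$ advances \emph{faster} than the speed $|\nabla\phi|-\partial_\nu\Phi$, while anything dominated by $\phi$ near the contact point has gradient at most $|\nabla\phi|$ and hence moves \emph{slower}; so a subsolution below $u_m\lesssim\phi$ will fall behind $\partial\{\phi>0\}$, not overtake it. The correct contradiction is that the support of $u_m$ \emph{cannot catch up} to $(x_0,t_0)$, even though $(x_0,t_0)\in\Omega$ forces it to.

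The two ingredients that make this rigorous, and which your proposal omits, are: (i) Theorem~\ref{pmeBddSpeed} (the bounded-jump/finite-speed property of $\supp u_m$, uniform in $m$), which guarantees that along a subsequence $\supp u_{m_k}$ meets every cylinder $C_\delta\cap\{t<t_0\}$ — i.e., the support genuinely approaches $(x_0,t_0)$ from strictly earlier times rather than nucleating at $t_0$; and (ii) the use of a small spatial translate $\xi(x,t)=\phi(x+\gamma\nu,t)$ of the test function itself as the comparison object. One checks $u_{m_k}<\xi$ on the relevant part of the parabolic boundary of $C_r$, while $\xi<0$ at points of $\supp u_{m_k}$ near $(x_0,t_0-\beta)$, so $u_{m_k}$ must cross $\xi$ from below at a first time $\tau_{m_k}<t_0$ in the parabolic interior; at that crossing the strict inequalities $\min(-\Delta\xi-\Delta\Phi,\;\xi_t-|\nabla\xi|^2-\nabla\xi\cdot\nabla\Phi)>\alpha/3$ contradict the viscosity subsolution property of $u_{m_k}$ in the interior case and Lemma~\ref{PMEDfact} in the boundary case. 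No quantitative lower bound on $u_m$ is needed anywhere. Relatedly, condition (b) of Definition~\ref{subsolution} for $\tilde u_1$ also rests on Theorem~\ref{pmeBddSpeed}, not merely on the definition of $\Omega$ as a relaxed limit of supports.
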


\begin{proof}  First note that $u_2$ is lower semicontinuous by its definition. Likewise, $u_1$ is upper semicontinuous.

\textit{A. $u_2$ is a supersolution: }

1. Suppose we have a smooth function $\phi$ and $u_2 - \phi$ has a local minimum at $(x_0, t_0)$ in \\$\overline{\{u_2 > 0\}}\cap \{t \le t_0\}$.  By adding $\epsilon(t-t_0) - \epsilon(x-x_0)^2 +c$ to $\phi$ one may assume that the minimum is zero, and is strict in $C_r\cap \overline{\{u_2 > 0\}}$, where $C_r := B_r(x_0) \times [t_0 - r, t_0]$ for  small $r>0$.  

If $(x_0, t_0)$ is in $\{u_2 > 0\}$, by lower-semicontinuity of $u_2$, we can make $r$ smaller and assume that $C_r \subset \{u_2 > 0\}$.  On the other hand if $(x_0, t_0) \in \partial \{u_2 > 0\}$, we can assume that \eqref{bdryTouchingCond} holds for $\phi$.  In particular, $|\nabla \phi| \ne 0$ so that $u_2 - \phi >0$ in $C_r \cap \{u_2 > 0\}^c$ away from $(x_0, t_0)$.  Thus in either case we can find that $u_2 - \phi$ has a strict local mininum zero in all of $C_r$.\\

2. We now claim the following:  if $r$ is sufficiently small, along a subsequence $u_m - \phi$ has a minimum at points $(x_m, t_m) \in C_r$ with $(x_m, t_m) \to (x_0, t_0)$ and $(x_m, t_m) \in \overline{\{u_m > 0 \}}$.  

To show this, define $(x_m, t_m) = \mbox{argmin}_{C_r} \;(u_m - \phi)$; we can assume that the sequence only ranges over $m$ that achieve the infimum of $u_2$ at $(x_0, t_0)$.  Let $(x',t')$ be a limit point of $\{(x_m, t_m)\}_m$.

First let us show that upon further refinement of our sequence we have $(x_m,t_m) \in \overline{\{u_m > 0 \}}$.   Clearly this is true if $(x_0,t_0)\in\{u_2>0\}$, and thus suppose $(x_0, t_0) \in \partial\{u_2 > 0\}$ and $(x_m,t_m)$ lies outside of the support of $u_m$.  Then we can assume that \eqref{bdryTouchingCond} holds for $\phi$, so in particular we can assume that $|\nabla \phi| \ne 0$ in $C_r$. This rules out the possibility that $(x_m,t_m)$ lies in the interior of $C_r$. Also, in this case $\phi(x_0, t_0) = 0$, and so we can find $\alpha > 0$ so that $\phi < -\alpha < 0$ on $\partial C_r \cap \{u_2 > 0 \}^c$.  This rules out the possibility that $(x_m,t_m)$ lies on the boundary of $C_r$. Thus we conclude that  $(x_m, t_m) \in \overline{\{u_m > 0\}}$ for sufficiently large $m$.

Next let us verify that $(x',t') = (x_0, t_0)$.  By definition for arbitrary $(y,s)$ in $C_r$ 
\begin{align}\label{thingweregonnatakelimof}
  (u_m-\phi)(y,s)  \ge  (u_m - \phi)(x_m,t_m).
\end{align}
Since $(x_m,t_m) \to (x',t')$, for each $n$ there is $M(n)$ so that $|(x_m,t_m) - (x',t')| < 1/n$ if $m \ge M$, and we may assume that $M(n) \ge n$.  Then
\[\inf_{m \ge M(n)} u_m(x_m,t_m) \ge  \inf_{\substack{ m \ge M(n) \\|(x',t') - (y,s)| < 1/n }} u_m(y,s) \ge \inf_{\substack{ m \ge n \\|(x',t') - (y,s)| < 1/n }} u_m(y,s).\]
Taking $\sup_n$ on both sides we find $\liminf_{m\to \infty} u_m(x_m,t_m) \ge u_2(x',t')$.
Then, taking $\liminf$ of both sides of ~\eqref{thingweregonnatakelimof} as $(y,s) \to (x_0, t_0)$ and $m \to \infty$, we find
\[(u_2-\phi)(x_0,t_0) \ge (u_2 - \phi)(x',t')\]
which contradicts that $(x_0,t_0)$ is the strict minimum of $u_2 - \phi$ in $C_r$.  This proves our claim.
\newline\newline
3. To finish showing that $u_2$ is a viscosity supersolution, take $\phi$ and $(x_m,t_m)$ as given above. When $(x_0, t_0) \in \{u_2 > 0\}$, a straightforward computation using the the properties of $u_m$ as viscosity solutions of (PME-D)$_m$ gives
\[-\Delta \phi(x_0,t_0) \ge  \Delta \Phi(x_0,t_0)\]
as needed. Next suppose $(x_0, t_0) \in \partial \{u_2 > 0\}$, and that \eqref{bdryTouchingCond}  holds for $\phi$.  Suppose towards a contradiction that there is $\alpha > 0$ so that
\begin{equation}\label{observation00}\max(-\Delta \phi - \Delta \Phi, \phi_t - |\nabla\phi|^2-\nabla \phi \cdot \nabla \Phi)(x_0, t_0) = -\alpha < 0. 
\end{equation}
Let us define $\phi_m := \phi + C(m)$ so that $(u_m - \phi_m)(x_m,t_m) = 0$.  
Since $(x_m,t_m)\to (x_0,t_0)$, \eqref{observation00} yields that 
\[\big((\phi_m)_t - (m-1)\phi_m(\Delta \phi_m + \Delta \Phi) - |\nabla \phi_m|^2 - \nabla \phi_m \cdot \nabla \Phi\big)(x_m,t_m) < 0,\]
which contradicts with the fact that $u_m$ is viscosity solution of (PME-D)$_m$. Thus we have $(x_m,t_m)\in\partial\{u_m>0\}$. But then the inequality 
\[
 ((\phi_m)_t - |\nabla \phi_m|^2 - \nabla \phi_m \cdot \nabla \Phi)(x_m,t_m) < -\alpha / 2<0.
\]
contradicts Lemma \ref{PMEDfact}, which applies since $\phi$ is smooth and so satisfies \eqref{bdryTouchingCond} at $(x_m, t_m)$ for large $m$.

\medskip

\textit{B. $\tilde{u}_1$ is a subsolution}

The subsolution part of our theorem is harder to prove, since a smooth test function touching $\tilde{u}_1$  at a free boundary point $(x_0,t_0)$ from above in $\overline{\Omega}$ cannot be extended smoothly to outside of $\Omega$ so that the order is preserved. Thus the proof of B requires a careful study of the behavior of the free boundary of $\tilde{u}_1$, which is achieved by studying the properties of $u_1$ and $u_m$. First note that Definition~\ref{subsolution} (b) is satisfied due to Theorem~\ref{pmeBddSpeed}. We proceed to show the property given in Definition~\ref{subsolution} (c). 

1.  It is straightforward from the definition of $\Omega(t)$ that 
 \begin{equation}\label{order0}
 \{u_1(\cdot,t)>0\} \subset \Omega(t).
 \end{equation}
Parallel arguments to the supersolution case yield that $-\Delta u_1 \le \Delta\Phi$ in $\{u_1 > 0\}$ in the viscosity sense. Thus   it follows that 
\begin{equation}\label{order}
u_1 \leq \tilde{u}_1.
\end{equation}
 Suppose that we have a smooth function $\phi$ and $\tilde{u}_1 - \phi$ has a strict local maximum at $(x_0, t_0)$ in $\Omega \cap \{t \le t_0\}$. As mentioned before  $\tilde{u}_1$ satisfies $-\Delta\tilde{u}_1(\cdot,t) \leq \Delta\Phi$, indeed with equality in the interior of $\Omega(t)$. Thus to check that $\tilde{u}_1$ is a subsolution, it is enough to consider the case when $x_0\in \partial\Omega(t_0)$ and  $\tilde{u}_1(x_0,t_0)=0$. Note that in this case ~\eqref{order} yields that $u_1-\phi$ also has a local maximum at $(x_0,t_0)$ in $\Omega \cap \{t \le t_0\}$.
Now suppose towards a contradiction that
\[\alpha := \min(-\Delta \phi - \Delta \Phi, \phi_t - |\nabla \phi|^2 - \nabla \phi \cdot \nabla \Phi)(x_0, t_0) > 0.\]
Then since $\phi$ and $\Phi$ are smooth, it follows that for a small $r > 0$ 
$$
\min(-\Delta \phi - \Delta \Phi, \phi_t - |\nabla \phi|^2 - \nabla \phi \cdot \nabla \Phi) > 2\alpha / 3 \quad\hbox{ in } C_r:=B_r(x_0) \times [t_0 - r, t_0].
$$

2.  Let $\Gamma$ be the parabolic boundary of $C_r$. We claim that $\Gamma \cap \Omega \subset \{\phi \ge \delta_0\}$ for some $\delta_0 > 0$.
\newline
To see this, suppose the claim is false: this means that we can find $(y,s) \in \Gamma \cap \Omega \cap \{\phi > 0\}^c$.  But then $\phi(y,s) \le 0$ and so $(\tilde{u}_1 - \phi)(y,s) \ge -\phi(y,s) \ge 0$ which violates the assumption that $\tilde{u}_1 - \phi$ is strictly negative in $\Gamma \cap C_r$.

3. Now we proceed to show that $u_m < \phi$ on the relevant part of the parabolic boundary, that is, there exists some $\e>0$ independent of $m$ such that 
\begin{equation}\label{order10}
u_m < \phi - \e \hbox{ on }   \Gamma \cap \supp(u_m) \hbox{ for sufficiently large } m.
\end{equation}
 To show \eqref{order10},  suppose not.  Then we can find $(x_k,t_k) \in \Gamma \cap \supp(u_{m_k})$ where $u_{m_k}(x_k,t_k) \ge \phi(x_k,t_k) - \frac{1}{k}$, and by compactness we can assume $(x_k,t_k) \to (x',t') \in \Gamma\cap\Omega$. 
  Then we have that for each $n$, there is $K(n)$ so that $|(x_k,t_k) - (x',t')| < 1/n$ and $m_k \ge k$ if $k \ge K(n)$, where we can assume $K(n) \ge n$.  Then
\[\sup_{k \ge K(n)} u_{m_k}(x_k,t_k) \le \sup_{\substack{ k \ge K(n) \\|(x',t') - (y,s)| < 1/n }} u_{k}(y,s) \le \sup_{\substack{k \ge n \\|(x',t') - (y,s)| < 1/n }} u_{k}(y,s).\]
Taking the infimum over both sides, we find
\[u_1(x',t') \ge \limsup_{k \to \infty} u_{m_k}(x_k,t_k) \ge \limsup_{k \to \infty} \phi(x_k,t_k) = \phi(x',t') > \delta_0,\]
which contradicts that $u_1 - \phi < 0$ on $\Gamma \cap \Omega$.
\medskip

\begin{figure}
\centerline{\includegraphics[width=4in]{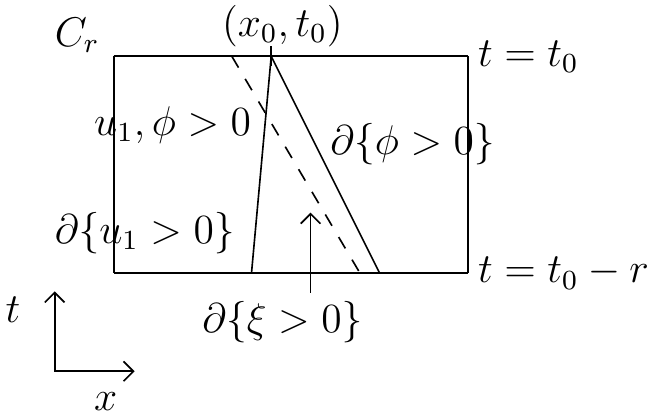}}
\caption{The motivation for $\xi$: it crosses $u_1$ at an earlier time}
\label{xiCrossesEarlier}
\end{figure}

4. Now  we define
\[\xi(x,t) = \phi(x+\gamma \nu, t), \hbox{ where } \nu = - \frac{\nabla \phi}{ | \nabla \phi|}(x_0,t_0).\]
Here $\gamma > 0$ is chosen small enough to satisfy first that 
\[
\min(-\Delta \xi - \Delta \Phi, \xi_t - |\nabla \xi|^2 - \nabla \xi \cdot \nabla \Phi) \geq 2\alpha/3+O(\gamma) >\alpha / 3\hbox{ in }C_r.
\]
  and secondly that for $m$ large, $u_m < \xi$ on $\Gamma \cap \supp(u_m)$ (which is possible since $\xi - \phi = O(\gamma)$ and $u_m - \phi$ is bounded away from zero on $\Gamma \cap \supp(u_m)$).
  \newline\newline
This justifies the following definition:
\[\tau_m := \sup\{t: (u_m - \xi)(x,t) < 0 \mbox{ for all } x \in \overline{\{u_m(\cdot, t)>0\}} \cap C_r\}.\]
Then $\tau_m$ will be the first crossing time of $u_m$ and $\phi$, provided they cross (since $u_m$ is continuous, we need not worry about jumps inside its support).  

5. We now wish to show that, along a subsequence, $u_m$ crosses $\xi$ in $C_r$.  To do this, we first prove that there is a subsequence $\{m_k\}$ so that
\[C_\delta \cap \{t < t_0\} \cap \supp(u_{m_k}) \ne \emptyset \mbox{ for all } \delta > 0.\] 

To show this first observe that, since $u_1(x_0,t_0) = 0$, there exists $M$ so that 
 \begin{align}
    \sup_{\substack{ m \ge M \\|(x_0,y_0) - (y,s)| < 1/M }} u_m(y,s) < 1. \label{boundOnu}
 \end{align}
Now assume towards a contradiction that there is a $\delta$ where our claim fails; we can take $\delta$ small enough so that $\delta < M^{-1}$.  We use Theorem~\ref{pmeBddSpeed} to derive our contradiction.  First, we use the theorem to find positive numbers $r_{\max}, T$ that depend on $K = 1$, the behavior of $\Phi$ near $(x_0,t_0)$, and dimension, and we may assume $T < \delta / 8$.  Now set $r_0$ a number smaller than $\min(r_{\max}, \delta/8)$ and $\mu =\min(T/8,r_0/8)$.  By definition of $\Omega$, we can find $(x',t')$ within distance $\mu$ of $(x_0,t_0)$ where $\eta(x',t) = 1$.  Thus we can find a subsequence $\{m_k\}_{k = 1}^\infty$ all bigger than $M$ and points $(y_{m_k},s_{m_k}) \in \supp \;u_{m_k}$ within distance $\mu$ of $(x',t')$.  

Consider a specific $m_k$.  Then by the assumption that $u_{m_k} = 0$ in $C_\delta \cap \{t < t_0\}$, since $r_0$ and $T_0$ are chosen much smaller than $\delta$, we find $u_{m_k} = 0$ in $B_{r_0}(x') \cap \{t = t_0 - T/2\}$.  Further, by \eqref{boundOnu}, 
$$u_{m_k} \le 1\hbox{ on the parabolic boundary of } B_{2r_0}(x') \times [t',t'+T].
$$
  Thus we apply Theorem~\ref{pmeBddSpeed} to find that 
\[u_{m_k} = 0 \in B_{r_0/4}(x') \times [t_0 -T/2,t_0+T/2]\]
and by the size of $r_0$, we have that $y_{m_k} \in B_{r_0/4}(x')$ and $s_{m_k} < t_0 + T/2$.  This yields $u_{m_k} = 0$ in a neighborhood of $(y_{m_k},s_{m_k})$.  This is a contradiction, so we find our claim holds for every $m_k$, giving us our desired subsequence.

6.  We will use the subsequence from the previous step to show that $\tau_{m_k} < t_0$. Indeed note that, for $|\alpha|, \beta > 0$,
\begin{eqnarray*}
  \xi(x_0+\alpha, t_0 - \beta)
& = & -\gamma  |\nabla \phi(x_0,t_0)| + \alpha \cdot \nabla \phi(x_0, t_0)  \\
&&- \beta \phi_t(x_0, t_0) + O(|\alpha|^2 + \beta^2 + \alpha \beta + \gamma^2).
\end{eqnarray*}
Thus there exists $\delta=\delta(\gamma) > 0$ so that if $|\alpha|^2 +\beta^2 < \delta^2$ then $\xi(x_0 + \alpha, t_0 - \beta) < 0$.  Due to the previous claim we can now find points $(y_{m_k},s_{m_k}) \in C_\delta \cap \{t < t_0\} \cap \supp(u_{m_k})$, hence
\[(u_{m_k}-\xi)(y_{m_k},s_{m_k}) > 0.\]
Thus $\tau_{m_k} \le s_{m_k} < t_0$. 

7.  Consequently there exists a crossing point $(x_{m_k}, \tau_{m_k}) \in C_r \cap \{t < t_0\} \cap \supp(u_{m_k})$ where $(u_{m_k}-\xi)(x_{m_k},\tau_{m_k}) = 0$.  Further, since $u_{m_k} - \xi < 0$ on $\Gamma\cap \text{supp }(u_{m_k})$ from step 3, we have that $(x_{m_k}, \tau_{m_k})$ is on the parabolic interior of $C_r$. Then we have that
\[\min(-\Delta \xi - \Delta \Phi, \xi_t - |\nabla \xi|^2 - \nabla \xi \cdot \nabla \Phi)(x_{m_k},\tau_{m_k}) > \alpha / 3\]
which forces that $(x_{m_k},\tau_{m_k}) \in \partial\{u_{m_k} > 0 \}$.  But then the inequality 
\[\left[\xi_t - |\nabla \xi|^2 - \nabla \xi \cdot \nabla \Phi\right](x_{m_k},\tau_{m_k}) > 0 \]
contradicts Lemma \ref{PMEDfact}, which applies since $\xi$ is smooth and thus satisfies \eqref{bdryTouchingCond}  at $(x_{m_k}, \tau_{m_k})$.

{\textit C. $\tilde{u}_1, u_2$ converges to $u_0$ at $t=0$}

It is not hard to check via comparison with radial barriers of (PME)$_m$, based on the local Lipschitz geometry of $\partial\Omega_0$,  that $\Omega(t)$ and $\{u_2(\cdot,t)>0\}$ converges to $\Omega_0$ in Hausdorff distance as $t\to0^+$.  From this fact and that $u_0$ solves $-\Delta u_0=\Delta\Phi$ in the interior of $\Omega_0$,  we have $\lim_{\tau\to 0}\tilde{u}_1(\cdot,\tau) = u_0$ from the definition of $\tilde{u}_1$. On the other hand $u_2$ satisfies $-\Delta u_2 >\Delta \Phi$ in $\{u_2>0\}\cap\{t>0\}$ and thus we have $\liminf_{\tau\to 0} u_2 (\cdot,\tau) \geq u_0$. Since $u_2 \leq \tilde{u}_1$ by definition,  it follows that $u_2(\cdot,\tau)$ converges to $u_0$ as $\tau\to 0$ as well. 
\end{proof}

\subsection{Convergence of $u_m$ as $m\to\infty$}
Now let us fix a compact set $\Omega_0$ in $\R^d$ with Lipschitz boundary, 
and let $u_0$ be as given in \eqref{initial}.  Let $u_m$ be the viscosity solution of (PME)$_m$ with initial data $u_0$. If we knew that $\{u_m\}$ locally uniformly converges to a function $u$ as $m\to\infty$, then Theorem~\ref{pmelimit} would yield that $u$ is a viscosity solution of $(P)$. Unfortunately we do not know whether such convergence is true: due to the quasi-static nature of $(P)$, $u$ may not be continuous over time and this may complicate the convergence of $u_m$.  Thus we take the alternative approach to show the convergence of the support of $\{u_m>0\}$ (see Theorem~\ref{convergence} (b)). The proof relies on the fact that $\{u_m\}$ has a stability property obtained from the $L^1$ contraction of the corresponding density function $\rho_m$ given by \eqref{pressure}. Using this stability as well as the comparison principle (Theorem~\ref{thm:cp}) we will obtain the support of $u_m$ converges to that of the unique solution $u$ of $(P)$. From this result we then obtain the uniform convergence of $\rho_m$ to the characteristic function of $\Omega_t$ away from the boundary of $\Omega_t$ (Corollary~\ref{convergence2}).

\begin{theorem}\label{convergence}
Take $\Omega_0$ and $u_0$ as given above and let $\Phi$ satisfy $(\textbf{A1})$. Then the following hold:
\begin{itemize}
\item[(a)] There exists a unique evolution of compact sets $\{\Omega_t\}_{t>0}$ such that any viscosity solution $u$ of $(P)$ satisfies $\Omega_t= \overline{\{u(\cdot,t)>0\}}$ for each $t>0$.\\
\item[(b)] For each $t>0$, the Hausdorff distance $d_H(\Omega_t,\overline{\{u_m(\cdot,t)>0\}})$ goes to zero as $m\to\infty$, and $\limsup_{m\to\infty} u_m(\cdot,t) $ is uniformly bounded.
\end{itemize} 
\end{theorem}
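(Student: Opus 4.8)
The plan is to build the evolution $\{\Omega_t\}_{t>0}$ directly from the half-relaxed limits of $\{u_m\}$ constructed above, to trap the positive set of an arbitrary viscosity solution of $(P)$ (and that of $u_m$ for large $m$) between two perturbations of it via the comparison principle of Theorem~\ref{thm:cp}, and then to pinch those perturbations together using the $L^1$-contraction and mass conservation of the densities $\rho_m$, upgraded to a geometric estimate with the help of $(\textbf{A1})$. The uniform bound in (b) is the easy part: since $\Omega_0$ is compact and $u_0$ is bounded, $u_0\le (C-\Phi)_+$ for $C$ large, and $(C-\Phi)_+$ is a stationary classical solution of (PME-D)$_m$, so the comparison principle for (PME-D)$_m$ gives $0\le u_m(\cdot,t)\le (C-\Phi)_+$ for all $m,t$; this also confines every $\{u_m(\cdot,t)>0\}$ to the fixed compact set $\{\Phi<C\}$, so all the sets below are automatically compact.

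\emph{The sandwich.} For a compact Lipschitz set $K$, let $u_0^K$ be the initial pressure from \eqref{initial}, $u_m^K$ the corresponding (PME-D)$_m$ solution, and let $\tilde u_1^K,u_2^K,\Omega^K(t)$ be produced by the construction preceding Theorem~\ref{pmelimit}. By Theorem~\ref{pmelimit}, $\tilde u_1^K$ is a subsolution and $u_2^K$ a supersolution of $(P)$ with initial data $u_0^K$; by $(\textbf{A1})$, $\tilde u_1^K>0$ in $\mathrm{int}\,\Omega^K(t)$, so $\overline{\{\tilde u_1^K(\cdot,t)>0\}}=\Omega^K(t)$, while $\{u_2^K(\cdot,t)>0\}\subseteq\Omega^K(t)$ and $\supp u_m^K$ clusters onto $\Omega^K(t)$ in the sense built into the definition of $\Omega^K$. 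Set $\Omega_t:=\Omega^{\Omega_0}(t)$. Given any viscosity solution $u$ of $(P)$ with initial data $u_0$ (so its u.s.c.\ envelope is a subsolution and its l.s.c.\ envelope a supersolution), pick compact Lipschitz sets $\Omega_0^{-\e}\subset\subset\Omega_0\subset\subset\mathrm{int}\,\Omega_0^{+\e}$ with $|\Omega_0^{+\e}\setminus\Omega_0^{-\e}|\to 0$ as $\e\to 0$. The maximum principle for \eqref{initial} gives $u_0^{\Omega_0^{-\e}}\prec u_0\prec u_0^{\Omega_0^{+\e}}$, so Theorem~\ref{thm:cp} applies to the pairs $(\tilde u_1^{\Omega_0^{-\e}},\text{ l.s.c.\ envelope of }u)$ and $(\text{u.s.c.\ envelope of }u,\ u_2^{\Omega_0^{+\e}})$ and yields, for every $t>0$,
\[\Omega^{\Omega_0^{-\e}}(t)\ \subseteq\ \overline{\{u(\cdot,t)>0\}}\ \subseteq\ \Omega^{\Omega_0^{+\e}}(t).\]
Running the same argument with the comparison principle for (PME-D)$_m$ (using $\chi_{\Omega_0^{-\e}}\le\chi_{\Omega_0}\le\chi_{\Omega_0^{+\e}}$) and Theorem~\ref{pmelimit} traps $\overline{\{u_m(\cdot,t)>0\}}$ between the same two sets once $m$ is large.

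\emph{Closing the sandwich, and the main obstacle.} It remains to show $\Omega^{\Omega_0^{+\e}}(t)$ and $\Omega^{\Omega_0^{-\e}}(t)$ coincide as $\e\to 0$. Writing $\rho_m^{\pm\e}$ for the density of $u_m^{\Omega_0^{\pm\e}}$, mass conservation gives $\int\rho_m^{\pm\e}(\cdot,t)=|\Omega_0^{\pm\e}|$, the $L^1$-contraction for (PME-D)$_m$ gives $\|\rho_m^{+\e}(\cdot,t)-\rho_m^{-\e}(\cdot,t)\|_{L^1}\le|\Omega_0^{+\e}\setminus\Omega_0^{-\e}|$ uniformly in $m,t$, and comparison gives $\rho_m^{-\e}\le\rho_m^{+\e}$. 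By $(\textbf{A1})$ the pressures $u_m^{\pm\e}$ grow non-degenerately off their free boundaries, so $\rho_m^{\pm\e}\to 1$ on fixed interior compacta while the fringes $\{0<\rho_m^{\pm\e}<1/2\}$ stay thin; feeding this into the uniform $L^1$-bound forces $d_H(\Omega^{\Omega_0^{+\e}}(t),\Omega^{\Omega_0^{-\e}}(t))\to0$ as $\e\to0$, uniformly on compact time intervals, which closes the sandwich and simultaneously gives the uniqueness in (a), the identity $\overline{\{u(\cdot,t)>0\}}=\Omega_t$, and the Hausdorff convergence in (b). The hard part is precisely this last step: converting the \emph{soft} $L^1$-contraction of the densities into \emph{geometric} (Hausdorff) control of the supports requires the quantitative non-degeneracy and finite-speed-of-propagation barriers (as in section~2 and Step~5 of the proof of Theorem~\ref{pmelimit}, via Theorem~\ref{pmeBddSpeed}) that are available only because of the strict subharmonicity $(\textbf{A1})$; a secondary, more technical, difficulty is that Theorem~\ref{thm:cp} only compares strictly separated data, which is what forces the double $\e$-perturbation and the requirement that $\e\mapsto\Omega^{\Omega_0^{\pm\e}}(t)$ be continuous uniformly in $t$.
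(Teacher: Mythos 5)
Your overall architecture --- trapping $\overline{\{u(\cdot,t)>0\}}$ and $\overline{\{u_m(\cdot,t)>0\}}$ between the half-relaxed limits built from strictly separated perturbations $\Omega_0^{\pm\e}$ of the initial support, via Theorem~\ref{thm:cp}, Theorem~\ref{pmelimit} and the $L^1$-contraction --- is exactly the paper's, as is the uniform bound via the stationary barriers $(C-\Phi)_+$. The divergence, and the problem, is in your closing step.

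You propose to close the sandwich by proving $d_H\bigl(\Omega^{\Omega_0^{+\e}}(t),\Omega^{\Omega_0^{-\e}}(t)\bigr)\to 0$, i.e.\ by upgrading the $L^1$ bound $\|\rho_m^{+\e}-\rho_m^{-\e}\|_{L^1}\le|\Omega_0^{+\e}\setminus\Omega_0^{-\e}|$ to Hausdorff control of the supports. This is the step you yourself call ``the hard part,'' and it is asserted rather than proven: converting small Lebesgue measure of $\Omega^{+\e}(t)\setminus\Omega^{-\e}(t)$ into small Hausdorff distance requires a uniform interior density (clean-ball) estimate for $\Omega^{+\e}(t)$ near its free boundary, and the tools you cite do not supply it --- Theorem~\ref{pmeBddSpeed} and the barriers of Step~5 of Theorem~\ref{pmelimit} control how fast the support can \emph{expand} (finite propagation speed / no jumps outward), which is an upper bound on the support, not a lower density bound preventing the positivity set from becoming thin. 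The paper avoids this upgrade entirely. It defines $U$ and $V$ as the sup of subsolutions (resp.\ the l.s.c.\ envelope of the inf of supersolutions) over strictly separated data, so that $\tilde u_1^{-,n}\le U\le V\le u_2^{+,n}$, and then uses only the \emph{measure} estimate \eqref{contraction_2} to get $|\{V(\cdot,t)>0\}\setminus\{U(\cdot,t)>0\}|=0$. Since $\{V(\cdot,t)>0\}$ is open (here $(\textbf{A1})$ enters, through positivity of the solution of \eqref{initial}-type elliptic problems in the interior), a measure-zero difference forces $\{U(\cdot,t)>0\}$ to be dense in $\{V(\cdot,t)>0\}$, hence $\overline{\{U(\cdot,t)>0\}}=\overline{\{V(\cdot,t)>0\}}=:\Omega_t$; this gives (a) directly. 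The Hausdorff convergence in (b) is then not obtained from any $L^1$-to-$d_H$ conversion but is read off from the identity $\Omega_t=\overline{\{u_2(\cdot,t)>0\}}=\overline{\{\tilde u_1(\cdot,t)>0\}}$ together with the very definitions of the half-relaxed limits $u_2$ and $\Omega(t)$ (every cluster point of $\supp u_m(\cdot,t)$ lies in $\Omega(t)$, and every point where $u_2>0$ is approximated by points of $\supp u_m$). If you want to keep your route, you must actually prove the non-degeneracy/density estimate you invoke; otherwise you should replace the closing step by the measure-plus-openness argument.
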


\begin{proof}
1. The proof is based on the $L^1$-contraction property (see e.g. section 3.5 of \cite{v}), which states that for two weak solutions $\rho_1, \rho_2$ of \eqref{pme_drift}, the $L^1$ norm of their differences decreases in time. In terms of the pressure variable $p_i = \frac{m}{m-1}\rho_i^{m-1}$, this reads
\begin{equation}\label{contraction}
\|(p_1^{1/(m-1)}-p_2^{1/(m-1)})(\cdot,t)\|_{L^1(\R^d)} \leq \|(p_1^{1/(m-1)}-p_2^{1/(m-1)})(\cdot,0)\|_{L^1(\R^d)}.
\end{equation}

Let us fix the initial data $u_0$ and $v_0$ so that $u_0\prec v_0$. For each $m$, let $u_m$ and $v_m$ be respectively the viscosity solutions of \pme with their respective initial data $u_0$ and $v_0$. Let us consider $u_1$, $u_2$, $\tilde{u}_1$ as given in Theorem~\ref{pmelimit}, and let $v_1, v_2, \tilde{v}_1$ denote the corresponding  functions  given in Theorem~\ref{pmelimit} defined with $\{v_m\}$ instead of $\{u_m\}$.

Since $u_0\prec v_0$, Theorem~\ref{thm:cp} applies to $\tilde{u}_1$ and $v_2$, and so using Theorem ~\ref{pmelimit} yields that 
 $$
 u_1\leq \tilde{u}_1 \leq v_2.
 $$ On the other hand, \eqref{contraction} yields that
 $$
 \|(u_m^{1/(m-1)}-v_m^{1/(m-1)})(\cdot,t)\|_{L_1} \leq \| (u_0^{1/(m-1)} - v_0^{1/(m-1)})(\cdot,0)\|_{L^1}.
 $$
 
The above inequality and the fact that $u_m \leq v_m$ and $\tilde{u}_1 \leq v_2$ imply that
\begin{equation}\label{contraction_2}
|\{v_2(\cdot,t)>0\} - \{\tilde{u}_1(\cdot,t)>0\}|\leq \limsup_{m\to\infty}  \|(u_m^{1/(m-1)}-v_m^{1/(m-1)})(\cdot,t)\|_{L_1} \leq |\{v_0>0\}-\{u_0>0\}|.
\end{equation}

2.   Take $u_0$ as given above, and let us consider
$$
V(x,t):= (\inf\{ v: v\hbox{ is a viscosity supersolution of } (P) \hbox{ with } u_0 \prec v(\cdot,0)\})_*
$$
and
$$
U(x,t):= \sup\{ u: u \hbox{ is a viscosity subsolution of } (P) \hbox{ with } u(\cdot,0)\prec u_0\}.
$$
Here $f_*$ denotes the lower semicontinuous envelop of $f$, as defined in \eqref{envelopes}.
Due to Theorem~\ref{thm:cp}, $U$ ($V$) then has the property of being below (above) any viscosity supersolution (subsolution) of $(P)$ with initial data $u_0$.
\medskip

 Let us consider a sequence of initial data $v^-_{0,n}$ and $v^+_{0,n}$ such that 
 \begin{itemize}
 \item[(a)] $v^{-,n}_0 \prec u_0 \prec v^{+,n}_0$ for each $n$;
 \item[(b)] $v^{\pm}_{0,n}$ uniformly converges to $u_0$ and $\{v^{\pm}_{0,n}>0\}$ converges $\{u_0>0\}$ uniformly in Hausdorff distance.
 \end{itemize}

  Such $v^{\pm}_{0,n}$ can be constructed using the fact that $\partial\Omega_0$ is locally Lipschitz. Now let $u^{\pm,n}_1$ and $u^{\pm,n}_2$ be the corresponding versions of $\tilde{u}_1$  and $u_2$ with the initial data $v^{\pm,n}_0$. Then due to Theorem~\ref{pmelimit} and  the definition of $U$ and $V$  we have 
 $$
 \tilde{u}^{-,n}_1 \leq U, \quad V\leq u^{+,n}_2 \hbox{ for any } n.
 $$

 Using these approximations of initial data, the fact that $\{V(\cdot,t)>0\}$ is open, and \eqref{contraction_2}, we conclude that 
 \begin{equation}\label{closures}
\Omega_t:=\overline{\{V(\cdot,t)>0\}} = \overline{\{U(\cdot,t)>0\}}.
 \end{equation}
 Now for any viscosity solution $u$ of $(P)$ with initial data $u_0$, we have $U \leq u \leq V$. Thus $\overline{\{u(\cdot,t)>0\}} = \Omega_t$, and we showed (a).

\medskip

3. By  Theorem~\ref{pmelimit} and the definition of $U$ and $V$,  we have 
$$
U\leq u_2  \leq (\tilde{u}_1)_* \leq V.
$$
Hence we have 
$$
\Omega_t=\overline{\{u_2(\cdot,t)>0\}}= \overline{\{\tilde{u}_1(\cdot,t)>0\}}.
$$ 
The above inequality and the fact that $\tilde{u}_1$ is a viscosity subsolution of $(P)$ with initial data $u_0$ yield (b).
\end{proof}
In terms of $\rho_m =  (\frac{m-1}{m}u_m)^{1/(m-1)}$ the convergence results can be stated as follows:

\begin{corollary}\label{convergence2}
Let $(\Omega_t)_{t>0}$ be the family of compact sets in $\R^d$ as given in Theorem~\ref{convergence}, and let  $\rho_m$ solve \eqref{pme_drift} with  initial data $\rho_m(\cdot,0) = (\frac{m-1}{m} u_0)^{1/(m-1)}$. Then  for each $t>0$,
\begin{itemize}
\item[(a)] $\limsup \rho_m \leq 1$;
\item[(b)] $\overline{\{\rho_m(\cdot,t)>0\}}$ uniformly converges to $\Omega_t$ in  Hausdorff distance; 
\item[(c)] $\rho_m(\cdot,t)$ locally uniformly converges to $1$ in $Int(\Omega_t)$, and to 0 in $(\Omega_t)^C$.
\end{itemize}
The same result holds for $\rho_m$ with initial data $\chi_{\Omega_0}$.
\end{corollary}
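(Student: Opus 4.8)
\emph{Proof strategy.} Everything follows from the relation $\rho_m=(\tfrac{m-1}{m}u_m)^{1/(m-1)}$ combined with Theorem~\ref{convergence}. Since $u_m\ge0$, one has $\{\rho_m(\cdot,t)>0\}=\{u_m(\cdot,t)>0\}$, so part (b) is exactly Theorem~\ref{convergence}(b). For part (a), recall the $u_m$ are bounded uniformly in $m$ by some $C_0=C_0(\Phi,\Omega_0)$ (comparison with the stationary profiles $(C-\Phi)_+$); hence $0\le\rho_m\le(\tfrac{m-1}{m}C_0)^{1/(m-1)}\to1$ as $m\to\infty$, uniformly in $(x,t)$, which gives $\limsup_{m\to\infty}\rho_m\le1$. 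The exterior part of (c) is immediate as well: for $K\subset(\Omega_t)^C$ compact, the Hausdorff convergence in (b) gives $\overline{\{u_m(\cdot,t)>0\}}\cap K=\emptyset$ for all large $m$, hence $\rho_m(\cdot,t)\equiv0$ on $K$.

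The substantive point is the interior part of (c). I would reduce it to the claim that $u_2$ (the function of Theorem~\ref{pmelimit}, i.e. the lower half-relaxed limit of the $u_m$) is strictly positive at every point of $\mathrm{Int}(\Omega_t)$: for then $u_2(x_0,t)=c>0$ forces $u_m>c/2$ on a space-time neighbourhood of $(x_0,t)$ for large $m$, so on any compact $K\subset\mathrm{Int}(\Omega_t)$ we get $u_m(\cdot,t)\ge c_K>0$ for large $m$, whence $\rho_m(\cdot,t)\ge(\tfrac{m-1}{m}c_K)^{1/(m-1)}\to1$ uniformly on $K$; together with $\limsup\rho_m\le1$ this is exactly local uniform convergence to $1$. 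By Theorem~\ref{pmelimit}, $u_2$ is a viscosity supersolution of $(P)$, so $-\Delta u_2(\cdot,t)\ge\Delta\Phi>0$ in $\{u_2(\cdot,t)>0\}$, while Theorem~\ref{convergence} gives $\overline{\{u_2(\cdot,t)>0\}}=\Omega_t$; thus $\{u_2(\cdot,t)>0\}$ is open and dense in $\mathrm{Int}(\Omega_t)$, and it remains to exclude interior zeros of $u_2(\cdot,t)$. I would do this using $(\textbf{A1})$ in two complementary ways: first, a positive constant $s$ is a subsolution of \pme (the defining operator evaluated at $\phi\equiv s$ equals $-(m-1)s\,\Delta\Phi<0$), which together with finite speed of propagation (Theorem~\ref{pmeBddSpeed}) keeps $u_m$ bounded below inside any internal hole of its positive set; second, $(\textbf{A1})$ is exactly what forces $\tilde u_1$ to be strictly positive throughout $\mathrm{Int}(\Omega(t))=\mathrm{Int}(\Omega_t)$ (recorded just before Theorem~\ref{pmelimit}), so a positive lower bound available at a nearby point of $\{u_2(\cdot,t)>0\}$ can be transferred to $x_0$ via comparison with small quadratic barriers of the type built in the proof of Lemma~\ref{v_r_fact}, contradicting $u_2(x_0,t)=0$.

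This last step --- excluding internal degeneracies of the limit, i.e. proving $\{u_2(\cdot,t)>0\}=\mathrm{Int}(\Omega_t)$ --- is the only place where genuine work is required, and it is precisely where $(\textbf{A1})$ is indispensable (without it one expects a persistent mushy region $0<\rho_\infty<1$, as noted in the introduction). Finally, for the last assertion of the corollary: the support evolution $\Omega_t$ in Theorem~\ref{convergence} depends on the initial data only through $\Omega_0$, and the densities $(\tfrac{m-1}{m}u_0)^{1/(m-1)}$ and $\chi_{\Omega_0}$ share the positive set $\mathrm{Int}(\Omega_0)$ (up to a null set, by $(\textbf{A1})$) and both converge in $L^1$ to $\chi_{\Omega_0}$; hence either by rerunning the arguments of Theorems~\ref{pmelimit}--\ref{convergence} verbatim, or by combining the $L^1$-contraction \eqref{contraction} between the two flows with the sandwiching by strictly separated initial data from the proof of Theorem~\ref{convergence}, the supports $\overline{\{\rho_m(\cdot,t)>0\}}$ still converge to $\Omega_t$, and (a)--(c) hold for initial data $\chi_{\Omega_0}$ as well.
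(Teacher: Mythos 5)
The paper gives no written proof of this corollary: it is presented as an immediate restatement of Theorem~\ref{convergence} in the density variable. Your treatment of (a), (b) and the exterior half of (c) is correct and is what the paper intends: the positivity sets of $\rho_m$ and $u_m$ coincide, the uniform bound $u_m\le C_0$ gives $\rho_m\le(\tfrac{m-1}{m}C_0)^{1/(m-1)}\to 1$, and Hausdorff convergence of the supports kills $\rho_m$ on compact subsets of $(\Omega_t)^C$. You are also right that the interior half of (c) is the only substantive point and that it reduces to a lower bound for $u_m$ on compact subsets of $\mathrm{Int}(\Omega_t)$; in fact a weaker bound suffices, since $\rho_m\ge 1-\epsilon$ as soon as $u_m\ge\tfrac{m}{m-1}(1-\epsilon)^{m-1}$, so any lower bound decaying subexponentially in $m$ would do.

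The argument you sketch for that step has a genuine gap. The remark that a positive constant is a classical subsolution of (PME-D)$_m$ does not control an internal hole of $\{u_m>0\}$: comparison on a space-time cylinder requires the ordering on the entire parabolic boundary, including the initial time slice of the hole, which is exactly what is unknown. Likewise, the quadratic barrier of Lemma~\ref{v_r_fact} propagates positivity forward in time only from a full spatial ball on which the solution is bounded below by some $\epsilon>0$ at a slightly earlier time, and essentially only to the center of that ball. What Theorem~\ref{convergence} provides is that $\{u_2(\cdot,t)>0\}$ is open (by lower semicontinuity) and dense in $\Omega_t$; density does not produce such a ball around an alleged interior zero $x_0$, so the ``transfer'' step is not justified. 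The inequalities available in the paper ($\tilde u_1^{-,n}\le U\le u_2$, positivity of $\tilde u_1^{-,n}$ in $\mathrm{Int}(\Omega_t^{-,n})$, and $|\Omega_t\setminus\Omega_t^{-,n}|\to0$ from \eqref{contraction_2}) only yield $u_2>0$ almost everywhere in $\mathrm{Int}(\Omega_t)$, which is not enough for locally uniform convergence. Closing this requires an actual non-degeneracy argument (e.g.\ an expanding subsolution of the type built in Lemma~\ref{appendix}, planted in a nearby ball of $\{u_2>0\}$ and shown, using $(\textbf{A1})$, to sweep over $x_0$ in short time), which neither your sketch nor the paper supplies. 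Your handling of the final assertion via the $L^1$-contraction \eqref{contraction} is adequate for $L^1$ statements, but, as you yourself note, the Hausdorff and locally uniform claims for initial data $\chi_{\Omega_0}$ require rerunning the viscosity arguments, since $\chi_{\Omega_0}$ and $(\tfrac{m-1}{m}u_0)^{1/(m-1)}$ are not pointwise ordered.
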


This concludes our analysis on the limiting profile of $\rho_m$. In the next two sections we study the gradient flow solution $\rho_{\infty}$ of the crowd transport equation \eqref{transport}. Among other things, we show that $\rho_m$ converge to $\rho_{\infty}$ as $m\to\infty$ in the Wasserstein distance (see Theorem ~\ref{main2}), and hence $\rho_\infty$ must coincide with $\chi_{\Omega_t}$.

\section{Convergence of the gradient flow solution as $m\to\infty$}
\label{sec:m_to_infty}

\subsection{Definition of the gradient flow solution and the discrete scheme}
For section 4 we introduce more assumptions:
\begin{itemize}
\item[(\textbf{A2})] $\displaystyle\inf_{\R^d} \Phi$ is finite, and without loss of generality we assume $\displaystyle\inf_{\R^d} \Phi = 0$.
\item[(\textbf{A3})] $\Phi$ is semi-convex, i.e. there exists $\lambda\in\R$ such that $D^2\Phi (x)\geq \lambda (Id)_{d\times d}$ for all $x\in\R^d$.\\

\item[(\textbf{A3'})] In addition to (\textbf{A3}), $\|\Delta \Phi\|_\infty \leq C$ for some finite $C$.
\end{itemize}

 The semi-convexity assumption (\textbf{A3}) guarantees the well-posedness of the discrete-time JKO solution.  When we prove convergence results as $m\to\infty$, we will replace (\textbf{A3}) by the stronger assumption (\textbf{A3'}). It ensures that $\Delta\Phi(x)$ cannot be too large, which makes it possible for us to obtain some quantitative estimates on the difference between  $\rho_m$ and $\rho_\infty$ for large $m$. $(\textbf{A2})$ is a technical assumption, and will be used explicitly in the proof of Lemma \ref{lemma:estimate_m_1} in section \ref{sec:m_to_infty}. The assumption $(\textbf{A1})$ will be only used in section 4 to link $\rho_{\infty}$ with the free boundary problem $(P)$, and in section 5 to obtain convergence results as $t\to\infty$. 
 
We denote by $\mathcal{P}_2(\mathbb{R}^d)$ the space of Borel probability measures on $\mathbb{R}^d$ with finite second moment, i.e., the set of probability measures $\rho(x)$ such that $\int_{\mathbb{R}^d} \rho(x) |x|^2 dx < \infty$. For a probability density $\rho \in \mathcal{P}_2(\mathbb{R}^d)$, we define its ``free energy'' $E_m[\rho]$ as
\begin{equation}
E_m[\rho] := \mathcal{S}_m[\rho] +\int_{ \mathbb{R}^d } \rho(x) \Phi(x) dx ~~\text{ for }1<m\leq \infty,
\label{def:E_m}
\end{equation}
where $\int_{ \mathbb{R}^d } \rho(x) \Phi(x) dx $ corresponds to the potential energy of $\rho$, and $\mathcal{S}_m[\rho]$ is its ``internal energy'', given by
\begin{equation}
\mathcal{S}_m[\rho] := 
\displaystyle\int_{\mathbb{R}^d} \frac{1}{m} \rho^m(x) dx~~ \text{ for }1<m<\infty,
\label{def:S_m}
\end{equation}
while $S_\infty$ is defined as
\begin{equation}
\mathcal{S}_\infty[\rho] := \begin{cases} 0 & \text{ for }\|\rho\|_{L^\infty(\mathbb{R}^d)} \leq 1\\
+\infty &\text{ otherwise. }\end{cases}
\label{def:S_infty}
\end{equation}

Next let us introduce the following discrete-time scheme (also called a minimizing movement scheme) introduced by \cite{jko}. We consider a time-step $h>0$, and initial data $\rho_0 \in \mathcal{P}_2(\mathbb{R}^d)$ satisfying $\|\rho_0\|_{L^\infty(\mathbb{R}^d)} \leq\nobreak 1$. For $1<m\leq \infty$, the sequence $(\rho_{m,h}^n)_{n\in \mathbb{N}}$ is recursively defined by $\rho_{m,h}^0 = \rho_0$ and 
\begin{equation}
\rho_{m,h}^{n+1} \in \arg\inf \left\{E_m[\rho] + \frac{1}{2h} W_2^2(\rho_{m,h}^n, \rho): \rho\in\mathcal{P}_2(\mathbb{R}^d)\right\},
\label{eq:def_jko}
\end{equation}
where $W_2(\cdot, \cdot)$ is the $2$-Wasserstein distance (for definition, see e.g. \cite{ags}.) We then define $\rho_{m,h}(x,t)$ as a function piecewise constant in time, given by
\begin{equation}
\rho_{m,h}(x,t) := \rho_{m,h}^{n}(x) \text{ for } t\in [nh, (n+1)h).
\label{eq:piecewise}
\end{equation}

Under the assumption in (\textbf{A3}) that $\Phi$ is semi-convex, one can check that for all $m>1$, the free energy $E_m[\rho]$ is $\lambda$-convex along the generalized geodesics with respect to 2-Wasserstein distance, where $\lambda$ is as given in (\textbf{A3}) (For the definition of generalized geodesics and $\lambda$-convexity, we refer to Appendix \ref{AppendixC}). One can then apply the theory of gradient flow solution developed in \cite{ags}, which gives the following existence and uniqueness results of the discrete solution, as well as a convergence result as $h\to 0$.

\begin{theorem}[\cite{ags}]\label{ags}
Let $1<m\leq \infty$ and suppose $\Phi$ satisfies \textup{(\textbf{A3})}. Moreover suppose $E_m[\rho_0]<\infty$, where $E_m$ be as given in \eqref{def:E_m}. Then for given $h>0$ the following holds for the sequence $(\rho_{m,h}^n)_{n\in\mathbb{N}}$ as defined in \eqref{eq:def_jko}: 
\begin{enumerate}[(a)]
\item Existence \& Uniqueness for discrete solutions (Section 2-3 of \cite{ags}): Let $\lambda$ be as defined in \textup{(\textbf{A3})}, and
let $h_0=-\frac{1}{\lambda}$ for $\lambda<0$, $h_0 = \infty$ for $\lambda\geq 0$. Then for $0<h<h_0$,  $\rho_{m,h}^n $ is uniquely defined for all $n\in \mathbb{N}$. 
\vspace{0.2cm}
\item Uniform convergence as $h\to 0$ (Theorem 4.0.7 -- 4.0.10 in \cite{ags}): Assume that $\Phi$ satisfies \textup{(\textbf{A2})}  in addition to \textup{(\textbf{A3})}, and consider initial data $\rho_0$ such that $E_m[\rho_0]\leq M$ for some constant $M$.  Let $\rho_{m,h}$ be as defined in \eqref{eq:piecewise}. Then for any $T>0$ and step size $0<h<1$, there exists some $\rho_m(t)$ (and $\rho_\infty(t)$ in the case $m=\infty$) in $C_W([0,T], \mathcal{P}_2(\mathbb{R}^d))$ such that
$$W_2(\rho_{m,h}(\cdot, t) , \rho_m(\cdot, t)) \leq C(\lambda) \sqrt{Mh} e^{-\lambda T} \text{ ~~for all }t\in [0,T],$$
where $\lambda$ is given by \textup{(\textbf{A3})}. 
Here we say $\rho\in C_W([0,T];\mathcal{P}_2(\R^d))$ if $\rho(\cdot,t)\in\mathcal{P}_2(\R^d)$ for each $0\leq t\leq T$ and 
$$
\rho(\cdot,t) \to \rho(\cdot,t_0)\hbox{ weakly in } \mathcal{P}_2(\R^d) \hbox{ as } t\to t_0 \hbox{ in }  [0,T].
$$

Moreover, for finite $m$, $\rho_m(x,t)$ coincides with the viscosity solution of \eqref{pme_drift}. 
\vspace{0.2cm}
\item Contraction in Wasserstein distance (Theorem 4.0.4 (iv) in \cite{ags}):  For a given $m$, consider the initial data $\rho_{01}, \rho_{02} \in \mathcal{P}_2(\mathbb{R}^d)$, with  $E_m[\rho_{0i}] < \infty$ for $i=1,2$. Let $\rho_1(x,t)$ and $\rho_2(x,t)$ denote the limit solutions as defined in part (b), with initial data
$\rho_{01}$ and $\rho_{02}$ respectively. Then we have the following stability result, where $\lambda$ is as given in \textup{(\textbf{A3})}:
$$W_2(\rho_1(\cdot,t), \rho_2(\cdot,t)) \leq e^{-\lambda t} W_2(\rho_{01}, \rho_{02}) \text{ for all }t\geq 0.$$

\end{enumerate}
\label{thm:collection}
\end{theorem}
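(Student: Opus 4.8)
\emph{Proof proposal.} Since the statement is attributed to \cite{ags}, the plan is to verify that the functional $E_m$ of \eqref{def:E_m} fits into the abstract theory of gradient flows of $\lambda$-convex functionals in $(\mathcal{P}_2(\R^d),W_2)$ developed there, uniformly in the range $1<m\le\infty$, and then to quote the relevant theorems. First I would record the structural properties of $E_m$. Writing $E_m=\mathcal{S}_m+\mathcal{V}_\Phi$ with $\mathcal{V}_\Phi[\rho]:=\int_{\R^d}\rho\,\Phi\,dx$, the potential term is $\lambda$-convex along every generalized geodesic precisely because $D^2\Phi\ge\lambda\,\mathrm{Id}$, which is \textup{(\textbf{A3})}. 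For the internal term I would invoke McCann's condition: if $F:[0,\infty)\to[0,\infty]$ is convex with $F(0)=0$ and $s\mapsto s^dF(s^{-d})$ is convex and nonincreasing, then $\rho\mapsto\int F(\rho)$ is convex along generalized geodesics. For $F_m(s)=\tfrac1m s^m$, $1<m<\infty$, one has $s^dF_m(s^{-d})=\tfrac1m s^{d(1-m)}$, convex and nonincreasing since $m>1$; for $m=\infty$ the relevant density is $F_\infty$ with $F_\infty(s)=0$ for $s\le 1$ and $F_\infty(s)=+\infty$ for $s>1$, and then $s^dF_\infty(s^{-d})$ is $+\infty$ on $(0,1)$ and $0$ on $[1,\infty)$, again convex and nonincreasing --- so the hard-congestion constraint $\{\|\rho\|_\infty\le1\}$ does define a geodesically convex set. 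Hence $\mathcal{S}_m$ is $0$-convex along generalized geodesics for every $1<m\le\infty$, and $E_m$ is $\lambda$-convex along generalized geodesics with the same $\lambda$ for all $m$. I would then check the remaining hypotheses of \cite{ags}: $E_m$ is proper (finite at $\rho_0$ by hypothesis), narrowly lower semicontinuous ($\mathcal{S}_m$ is narrowly l.s.c.\ as a convex integral functional, $\{\mu\le\mathcal{L}^d\}$ is narrowly closed, and $\mathcal{V}_\Phi$ is narrowly l.s.c.\ since $\Phi\ge0$ is continuous by \textup{(\textbf{A2})}), and coercive, i.e.\ $\inf_\rho\{E_m[\rho]+\tfrac{1}{2\tau}W_2^2(\rho,\rho_*)\}>-\infty$ for some small $\tau>0$ and a fixed $\rho_*\in\mathcal{P}_2(\R^d)$, which follows from $\Phi\ge0$ together with the quadratic lower bound on $\Phi$ coming from \textup{(\textbf{A3})}.

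With that in place, part (a) would follow from the minimizing-movement well-posedness of Sections 2--3 of \cite{ags}: for fixed $n$ the map $\rho\mapsto E_m[\rho]+\tfrac{1}{2h}W_2^2(\rho_{m,h}^n,\rho)$ is, along generalized geodesics based at $\rho_{m,h}^n$, the sum of the $\lambda$-convex $E_m$ and the $\tfrac1h$-convex penalization, hence $(\lambda+\tfrac1h)$-convex, which is strictly positive exactly for $0<h<h_0=-1/\lambda$ when $\lambda<0$ and for all $h>0$ when $\lambda\ge0$; existence of the minimizer in \eqref{eq:def_jko} comes from the direct method (narrow lower semicontinuity plus compactness of sublevels in $\mathcal{P}_2(\R^d)$) and uniqueness from this strict convexity. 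Parts (b) and (c) would then be read off from Theorem 4.0.4 of \cite{ags}, which, given the above, produces the $\mathrm{EVI}_\lambda$ gradient flow $\rho_m\in C_W([0,\infty);\mathcal{P}_2(\R^d))$ starting from $\rho_0$, its part (iv) being exactly the $e^{-\lambda t}$-contraction in (c), together with the quantitative minimizing-movement estimate of Theorems 4.0.9--4.0.11 of \cite{ags}, which yields $W_2(\rho_{m,h}(t),\rho_m(t))\le C(\lambda)\sqrt{E_m[\rho_0]\,h}\,e^{-\lambda T}$ on $[0,T]$ with $E_m[\rho_0]\le M$ by hypothesis. For the last clause of (b) I would use the classical identification of the $W_2$-gradient flow of $\mathcal{S}_m+\mathcal{V}_\Phi$ with the continuous weak solution of \eqref{pme_drift} --- the subdifferential of $E_m$ yields the velocity field $-\nabla\bigl(\tfrac{m}{m-1}\rho^{m-1}+\Phi\bigr)$, see \cite{o} --- combined with the fact from \cite{cv} and \cite{kl} that this weak solution coincides with the viscosity solution of \eqref{pme_drift}.

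The hard part will be the single non-routine point, namely establishing $\lambda$-convexity of $E_m$ along generalized geodesics \emph{uniformly up to $m=\infty$}: that is, the geodesic convexity of the constraint set $\{\|\rho\|_\infty\le1\}$ and, relatedly, the narrow closedness and lower semicontinuity of $\mathcal{S}_\infty$. Once these are in hand, together with the coercivity bound (which requires a little care only when $\lambda<0$), all three parts are direct applications of the $\cite{ags}$ machinery.
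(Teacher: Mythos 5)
Your proposal is correct and follows essentially the same route as the paper: one verifies that $E_m=\mathcal{S}_m+\int\rho\,\Phi\,dx$ is $\lambda$-convex along generalized geodesics uniformly in $1<m\le\infty$ and then quotes the existence/uniqueness, convergence, and contraction results of \cite{ags} (the paper does exactly this in Appendix C, via Propositions 9.3.2 and 9.3.9 and Lemma 9.2.7 of \cite{ags}, together with \cite{o}, \cite{cv}, \cite{kl} for the identification with the viscosity solution of \eqref{pme_drift}). The only point of divergence is the case $m=\infty$: you apply McCann's condition directly to the extended-valued integrand $F_\infty$, which requires a version of the displacement-convexity result for $[0,+\infty]$-valued densities (e.g.\ by approximation), whereas the paper obtains geodesic convexity of the constraint $\{\|\rho\|_\infty\le 1\}$ by passing to the limit $m\to\infty$ in the finite-$m$ convexity of the $L^m$ norms; both arguments are sound.
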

The above theorem yields the gradient flow solutions $\rho_m(\cdot, t)$ and $\rho_\infty(\cdot, t)$. In this section, our main goal is to prove that as $m\to\infty$, $\rho_m(\cdot, t)$ converges to $\rho_\infty(\cdot, t)$ uniformly in $t\in[0,T]$ in 2-Wasserstein distance.  Convergence rates will also be obtained in terms of $m$. Although the rate is not optimal, to the best of our knowledge, our result is the first that gives some explicit convergence rate as the exponent $m\to\infty$ in the porous medium equation. More precisely, our main theorem in this section is as follows:

\begin{theorem}\label{main2}
Let $\Phi$ satisfy \textup{(\textbf{A2})} and \textup{(\textbf{A3'})}, and consider $\rho_0 \in \mathcal{P}_2(\mathbb{R}^d)$ satisfying $\|\rho_0\|_\infty \leq 1$ and $\int \rho_0 \Phi dx \leq M$.  Let $\rho_m(t)$ and $\rho_\infty(t)$ be as given in Theorem \ref{thm:collection}(b) with the initial data $\rho_0$. Then for any $T>0$, we have
$$\lim_{m\to \infty} \sup_{t\in[0,T]}W_2(\rho_m(t), \rho_\infty(t)) = 0.$$
More precisely, we have the following convergence rate:
$$\sup_{t\in[0,T]}W_2(\rho_m(t), \rho_\infty(t)) \leq \frac{C(M,T,\|\Delta \Phi\|_\infty)}{m^{1/24}}$$
\end{theorem}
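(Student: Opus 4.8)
The plan is to never work with the continuum flows $\rho_m(t),\rho_\infty(t)$ directly, but to compare the discrete JKO iterates $\rho_{m,h}^n$ and $\rho_{\infty,h}^n$ of \eqref{eq:def_jko} step by step, then pass to the limit $h\to0$ via Theorem~\ref{thm:collection}(b), and only at the very end choose the step size $h=h(m)$ so as to optimize the resulting bound. We will use repeatedly that the common datum satisfies $\|\rho_0\|_\infty\le1$ and $E_m[\rho_0]=\mathcal S_m[\rho_0]+\int\rho_0\Phi\,dx\le \tfrac1m+M\le M+1$ uniformly in $m$; that $\|\rho_{\infty,h}^n\|_\infty\le1$ for every $n$ (since $\rho_{\infty,h}^n$ minimizes $E_\infty[\cdot]+\tfrac1{2h}W_2^2(\cdot,\rho_{\infty,h}^{n-1})$, and likewise $\|\rho_\infty(t)\|_\infty\le1$); and that all second moments $\int|x|^2\rho_{m,h}^n\,dx$ stay bounded by a constant $C(M,T,\|\Delta\Phi\|_\infty)$ along the scheme, a consequence of the a priori estimates of Lemma~\ref{lemma:estimate_m_1}.

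The heart of the matter is the one-step comparison, Proposition~\ref{prop:one_step_m_infty}. Writing $\mu_i=\rho_{i,h}^n$ and $\rho_i=\rho_{i,h}^{n+1}$ for $i\in\{m,\infty\}$, one tests the minimality of $\rho_m$ against the competitor $\rho_\infty$ — legitimate because $\|\rho_\infty\|_\infty\le1$, so $\mathcal S_m[\rho_\infty]\le\tfrac1m$ by \eqref{def:S_m} and hence $E_m[\rho_\infty]\le E_\infty[\rho_\infty]+\tfrac1m$ — and, conversely, one tests the minimality of $\rho_\infty$ against the sub-unit density $\widetilde\rho_m$ produced by Lemma~\ref{lemma:estimate_m_2}, which satisfies $\|\widetilde\rho_m\|_\infty\le1$, $W_2(\rho_m,\widetilde\rho_m)\le\eta_m$ and $E_m[\widetilde\rho_m]\le E_m[\rho_m]+\delta_m$ for errors $\eta_m,\delta_m\to0$ that are explicit (power-like) functions of $1/m$. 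Adding the two inequalities (sharpened by the $\lambda$-convexity of $E_m$ and $E_\infty$ along generalized geodesics, available under \textup{(\textbf{A3})}), the internal and potential energies cancel, and after gluing the optimal plans between $(\mu_m,\mu_\infty)$, $(\mu_m,\rho_m)$ and $(\mu_\infty,\rho_\infty)$ into a single coupling — the bounded second moments keeping every Wasserstein quantity under control — one is left with a recursion of the schematic form
\begin{equation*}
W_2^2(\rho_{m,h}^{n+1},\rho_{\infty,h}^{n+1})\ \le\ (1+Ch)\,W_2^2(\rho_{m,h}^n,\rho_{\infty,h}^n)\ +\ C\Big(\tfrac hm+h\,\delta_m+\eta_m\Big),
\end{equation*}
with $C=C(M,T,\|\Delta\Phi\|_\infty,\lambda)$; crucially the term $\eta_m$, coming from the replacement of $\rho_m$ by $\widetilde\rho_m$, carries no factor $h$.

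Next I would iterate this recursion over $n=0,\dots,N-1$ with $N=\lceil T/h\rceil$. Since $\rho_{m,h}^0=\rho_{\infty,h}^0=\rho_0$, the initial discrepancy vanishes, so discrete Gronwall gives
\begin{equation*}
\sup_{0\le n\le N}W_2^2(\rho_{m,h}^n,\rho_{\infty,h}^n)\ \le\ e^{CT}\,N\cdot C\Big(\tfrac hm+h\,\delta_m+\eta_m\Big)\ \le\ C(T)\Big(\tfrac1m+\delta_m+\tfrac{\eta_m}{h}\Big),
\end{equation*}
which by \eqref{eq:piecewise} controls $\sup_{t\in[0,T]}W_2(\rho_{m,h}(t),\rho_{\infty,h}(t))$. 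Combining this through the triangle inequality with the JKO discretization estimate $W_2(\rho_{i,h}(t),\rho_i(t))\le C\sqrt{Mh}\,e^{-\lambda T}$ of Theorem~\ref{thm:collection}(b) (valid for finite $m$ and for $m=\infty$) yields
\begin{equation*}
\sup_{t\in[0,T]}W_2(\rho_m(t),\rho_\infty(t))\ \le\ C(M,T,\|\Delta\Phi\|_\infty)\Big(\sqrt h+\tfrac1{\sqrt m}+\sqrt{\delta_m}+\sqrt{\tfrac{\eta_m}{h}}\Big).
\end{equation*}
Finally I would balance the two $h$-dependent terms by choosing $h\sim\sqrt{\eta_m}$; inserting the explicit (polynomial-in-$1/m$) rates for $\eta_m,\delta_m$ from Lemmas~\ref{lemma:estimate_m_1}--\ref{lemma:estimate_m_2} then produces both the qualitative convergence and the asserted bound $C(M,T,\|\Delta\Phi\|_\infty)\,m^{-1/24}$.

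The main obstacle is the one-step estimate together with the bookkeeping of how its errors accumulate over the $N\sim T/h$ steps: the energy-comparison error is $O(h/m)$ per step and sums harmlessly to $O(1/m)$, but the error $\eta_m$ incurred each time one replaces the possibly super-unit minimizer $\rho_{m,h}^{n+1}$ by a genuinely admissible density for $E_\infty$ is not weighted by $h$, so it sums to $O(\eta_m/h)$ and forces $h$ to stay bounded below in terms of $\eta_m$. This competition — the JKO time-discretization wanting $h\to0$ while the $\|\rho_{m,h}\|_\infty$-truncation wants $h$ not too small — together with the non-sharpness of the quantitative control of $\eta_m,\delta_m$ in Lemma~\ref{lemma:estimate_m_2} (which rests only on the energy bound $\int(\rho_{m,h}^n)^m\,dx\lesssim m$, hence on weak control of $\{\rho_{m,h}^n>1\}$, and on the absence of a uniform confinement bound under the present hypotheses), is precisely what degrades the exponent from the naive $m^{-1/2}$ to $m^{-1/24}$. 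Making the gluing of the optimal plans and the quantitative forms of Lemmas~\ref{lemma:estimate_m_1}--\ref{lemma:estimate_m_2} precise is where the real work lies.
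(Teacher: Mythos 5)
Your overall architecture is the same as the paper's: compare the two JKO schemes step by step, use Theorem~\ref{thm:collection}(b) to pass from $\rho_{\cdot,h}$ to $\rho_{\cdot}$ at cost $C\sqrt h$, and finally tune $h=h(m)$. The gap is in the one-step recursion, which you present schematically but which is exactly where the difficulty sits. Your plan is to compare $\rho_{m,h}^{n+1}$ and $\rho_{\infty,h}^{n+1}$ \emph{directly}, by testing the minimality of each against a competitor built from the other and adding the two inequalities. Two things go wrong. First, the one-step comparison of the $E_m$- and $E_\infty$-minimizers (Proposition~\ref{prop:one_step_m_infty}, and any variant of your ``add the two minimality inequalities'' argument) requires the \emph{common} base point to satisfy $\|\cdot\|_\infty\le 1$, so that it is admissible for $E_\infty$ and has $\mathcal S_m\le 1/m$; for $n\ge 1$ the base points $\rho_{m,h}^n$ and $\rho_{\infty,h}^n$ are different, and $\rho_{m,h}^n$ is in general \emph{not} bounded by $1$. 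Second, summing the two minimality inequalities with different base points leaves you with $W_2^2(\rho_\infty^{n+1},\rho_m^n)+W_2^2(\tilde\rho_m^{n+1},\rho_\infty^n)$ on the right and $W_2^2(\rho_m^{n+1},\rho_m^n)+W_2^2(\rho_\infty^{n+1},\rho_\infty^n)$ on the left; extracting $W_2^2(\rho_m^{n+1},\rho_\infty^{n+1})\le(1+Ch)W_2^2(\rho_m^n,\rho_\infty^n)+\dots$ from this is a quadrilateral-type inequality that is not a consequence of the triangle inequality or of ``gluing optimal plans''; it is precisely the content of the (nontrivial) convexity machinery of \cite{ags}. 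The paper sidesteps both problems with the tree construction of Figure~\ref{fig:tree}: it inserts the auxiliary measure $\eta^n$ (one $E_m$-step from $\rho_\infty^{n-1}$, whose base point \emph{is} admissible), applies Proposition~\ref{prop:one_step_m_infty} to the pair $(\eta^n,\rho_\infty^n)$, and applies Lemma 4.2.4 of \cite{ags} --- a same-functional, two-base-point estimate --- to the pair $(\rho_m^n,\eta^n)$. You need either this decoupling or a genuine proof of your quadrilateral step.

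A smaller but telling symptom: your recursion charges the truncation error $\eta_m$ once per step with no $h^{-1}$ weight, which after Gronwall and optimization would yield a rate better than $m^{-1/24}$ (roughly $m^{-1/16}$ with $\eta_m\sim m^{-1/4}$). In fact the per-step error is a $W_2$-perturbation of the term $\tfrac1{2h}W_2^2(\cdot,\rho_\infty^n)$ (equivalently, it enters the unsquared recursion additively as $\delta=Cm^{-1/8}$ and hence the squared recursion as $\delta^2/h$ per step), which is what forces $\delta\le h^{3/2}$, i.e.\ $m\ge h^{-12}$, and degrades the final exponent to $1/24$. Also, Lemma~\ref{lemma:estimate_m_1} does not give second-moment bounds; it only controls $\int(\mu_m-1)_+$, and fortunately no uniform second-moment bound is actually needed in the argument.
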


We point out that under the additional assumption (\textbf{A1}) and the assumption that $\rho_0 = \chi_{\Omega_0}$, we can combine the results in Theorem \ref{main2} with Theorem \ref{convergence}, and immediately obtain that $\rho_\infty$ must coincide with $\chi_{\Omega_t}$ almost everywhere, which gives Theorem \ref{thm:coincide}.  Without these two additional assumptions, Theorem \ref{main2} still holds, but  our approach fails to yield the connection between $\rho_\infty$ with the free boundary problem (P). Thus a further characterization of $\rho_{\infty}$ beyond as a weak solution of \eqref{weak_eqn} remains open in the general context.

The rest of this section will be devoted to proving Theorem \ref{main2}. In section \ref{subsec:one-step}, we consider the discrete JKO scheme \eqref{eq:def_jko} for $E_m$ and $E_\infty$ respectively, with the same initial data $\|\rho_0\|_\infty \leq 1$. We show that if we run the JKO scheme for one step only, then their Wasserstein distance is small.
Once we have the one-step estimate, we are finally ready to prove Theorem \ref{main2} in section \ref{subsec:proof_main2}, which says the Wasserstein distance between the continuous gradient flow solutions $\rho_m$ and $\rho_\infty$ also goes to zero as $m\to\infty$, with an explicit rate in terms of $m$.

\subsection{One-step estimate for large $m$}
\label{subsec:one-step}
We consider the initial data $\rho_0 \in \mathcal{P}_2(\mathbb{R}^d)$ satisfying $\|\rho_0\|_\infty \leq 1$ and with finite potential energy, and let $h$ be some fixed small time step. Then for any $2<m\leq \infty$, we define $\mu_m$ (and $\mu_\infty$ in the case $m=\infty$) as follows:
\begin{equation}
\mu_m := \underset{\rho \in \mathcal{P}_{2}(\mathbb{R}^d)} {\argmin}\left[ E_m[\rho] +  \frac{1}{2h} W_2^2(\rho_0, \rho) \right].
\label{eq:one_step_rho_m}
\end{equation}
Our main result in this subsection is Proposition \ref{prop:one_step_m_infty}, which says that the Wasserstein distance between $\mu_m$ and $\mu_\infty$ is of order $O(m^{-1/8})$ for large $m$.  To show that we first establish the following two technical lemmas concerning $\mu_m$ for $2<m<\infty$.

\begin{lemma} Let $2<m<\infty$, and let $\Phi$ satisfy \textup{(\textbf{A2})} and \textup{(\textbf{A3})}, and consider the initial data $\rho_0 \in \mathcal{P}_2(\mathbb{R}^d)$ satisfying $\|\rho_0\|_\infty \leq 1$ and $\int \rho\Phi \leq M$. Letting $\mu_{m}$ be defined as in \eqref{eq:one_step_rho_m}, the following estimate holds (where $a_+ := \max\{a, 0\}$):
$$\int_{\mathbb{R}^d} (\mu_m - 1)_+ dx \leq  2\sqrt{\frac{M+1}{m}}.$$
\label{lemma:estimate_m_1}
\end{lemma}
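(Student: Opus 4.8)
The plan is to use the minimizer $\mu_m$ as a competitor against the comparison density $\rho_0$ itself in the variational problem \eqref{eq:one_step_rho_m}, extract from the resulting energy inequality an upper bound on the internal energy $\mathcal{S}_m[\mu_m] = \frac1m \int \mu_m^m\,dx$, and then convert that integral bound into the stated $L^1$ bound on the ``overshoot'' $(\mu_m-1)_+$. First I would write down the minimality inequality: since $\mu_m$ is a minimizer,
\begin{equation}
\mathcal{S}_m[\mu_m] + \int \mu_m \Phi\,dx + \frac{1}{2h} W_2^2(\rho_0,\mu_m) \;\le\; \mathcal{S}_m[\rho_0] + \int \rho_0 \Phi\,dx,
\end{equation}
where the right-hand side used $W_2(\rho_0,\rho_0)=0$. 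Dropping the nonnegative Wasserstein term and using $\Phi \ge 0$ from \textup{(\textbf{A2})} to discard $\int \mu_m \Phi\,dx$, I get $\mathcal{S}_m[\mu_m] \le \mathcal{S}_m[\rho_0] + \int \rho_0\Phi\,dx \le \mathcal{S}_m[\rho_0] + M$. Since $\|\rho_0\|_\infty \le 1$ and $\rho_0$ is a probability density, $\mathcal{S}_m[\rho_0] = \frac1m\int \rho_0^m\,dx \le \frac1m\int \rho_0\,dx = \frac1m$. Hence $\frac1m \int \mu_m^m\,dx \le \frac{M+1}{m}$, i.e. $\int \mu_m^m\,dx \le M+1$.

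Next I would pass from this $L^m$-type bound to the $L^1$ bound on $(\mu_m-1)_+$. On the set $\{\mu_m > 1\}$ we have $\mu_m^m \ge \mu_m \ge (\mu_m-1)_+$ pointwise, but more usefully $\mu_m^m \ge \mu_m^2$ there, so $\int_{\{\mu_m>1\}} \mu_m^2\,dx \le M+1$. Let $A := |\{\mu_m > 1\}|$. By Cauchy--Schwarz (or Jensen on the probability space $\frac{dx}{A}$ restricted to $\{\mu_m>1\}$),
\begin{equation}
\int_{\{\mu_m>1\}} \mu_m\,dx \;\le\; A^{1/2}\left(\int_{\{\mu_m>1\}} \mu_m^2\,dx\right)^{1/2} \;\le\; A^{1/2}\sqrt{M+1}.
\end{equation}
On the other hand $A \le \int_{\{\mu_m>1\}} \mu_m\,dx \le \int \mu_m\,dx = 1$, so $A \le 1$ and thus $A^{1/2} \le 1$; combining, $\int_{\{\mu_m>1\}} \mu_m\,dx \le \sqrt{M+1}$. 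Finally $\int (\mu_m-1)_+\,dx = \int_{\{\mu_m>1\}}(\mu_m - 1)\,dx \le \int_{\{\mu_m>1\}} \mu_m\,dx \le \sqrt{M+1}$. This already gives the conclusion with constant $1$ rather than $2$; the factor $2$ (and the $1/\sqrt m$ rather than the cruder bound) presumably comes from a slightly different, more quantitative estimate — e.g. using $\mu_m^m \ge 1 + m(\mu_m - 1) \ge m(\mu_m-1)$ on $\{\mu_m>1\}$ by convexity of $t\mapsto t^m$, which yields $m\int(\mu_m-1)_+\,dx \le \int \mu_m^m\,dx \le M+1$, hence $\int(\mu_m-1)_+\,dx \le \frac{M+1}{m}$, an even stronger bound; the stated form with $\sqrt{(M+1)/m}$ is then a weaker but perhaps more robustly provable version, possibly needed because one only controls $\frac1m\int\mu_m^m$ up to the Wasserstein term which must be reinstated and estimated when $\rho_0$ has large second moment.

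The main obstacle I anticipate is not the algebra above but making sure the competitor comparison is legitimate and that one is not secretly using a bound (like finiteness of $\int \mu_m^m$ or that $\mu_m \in \mathcal{P}_2$ with the right second-moment control) that requires separate justification; in particular, if the intended route keeps the $\frac{1}{2h}W_2^2(\rho_0,\mu_m)$ term and trades it against a second-moment growth estimate, one needs an a priori bound $W_2^2(\rho_0,\mu_m) \le Ch$ (standard for one JKO step, following from the same minimality inequality together with a lower bound on $E_m$), and then the exponents $1/2$ and the constant $2$ emerge from balancing. I would therefore first settle on whether \textup{(\textbf{A2})} alone suffices to discard the potential term (it does, since $\Phi \ge 0$) so that the clean argument above goes through, and only invoke the finer convexity inequality $t^m \ge 1 + m(t-1)$ if the sharper rate is what the rest of the paper needs.
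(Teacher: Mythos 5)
Your overall strategy (use $\rho_0$ as a competitor, bound the internal energy of $\mu_m$, then convert to an $L^1$ bound on the overshoot) is the same as the paper's, but there is an algebra error at the first step that invalidates everything downstream. From $\mathcal{S}_m[\mu_m]\le \mathcal{S}_m[\rho_0]+M$ and $\mathcal{S}_m[\rho_0]\le \tfrac1m$ you conclude $\tfrac1m\int\mu_m^m\,dx\le\tfrac{M+1}{m}$; but $\tfrac1m+M=\tfrac{1+mM}{m}$, not $\tfrac{1+M}{m}$, so the correct conclusion is only $\int\mu_m^m\,dx\le m(M+1)$ (this is exactly the ``crude estimate'' the paper works with). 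With the corrected bound, your Cauchy--Schwarz step gives $\int_{\{\mu_m>1\}}\mu_m\,dx\le\sqrt{m(M+1)}$, which grows with $m$, and your alternative first-order convexity route $t^m\ge 1+m(t-1)$ gives only $\int(\mu_m-1)_+\,dx\le M+1$; neither yields any decay in $m$.

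The missing idea is to go to second order: for $t\ge 1$ and $m>2$ one has $t^m\ge 1+m(t-1)+\tfrac{m(m-1)}{2}(t-1)^2$, so
$\tfrac{m(m-1)}{2}\int(\mu_m-1)_+^2\,dx\le \int_{\{\mu_m\ge1\}}\mu_m^m\,dx\le m(M+1)$,
hence $\int(\mu_m-1)_+^2\,dx\le \tfrac{2(M+1)}{m-1}\le\tfrac{4(M+1)}{m}$. Since $|\{\mu_m\ge1\}|\le1$ (because $\mu_m$ has total mass $1$), Cauchy--Schwarz converts this $L^2$ bound into $\int(\mu_m-1)_+\,dx\le 2\sqrt{(M+1)/m}$; this is where both the factor $2$ and the exponent $1/2$ come from. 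Your speculation at the end about reinstating the Wasserstein term is not needed --- the entire content of the lemma is this second-order convexity step.
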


\begin{proof} 
Our proof is based on the following crude estimate: $\frac{1}{m} \int_{\mathbb{R}^d} (\mu_m)^m dx \leq M+1$. This inequality directly comes from the fact that $E_m[\mu_m] \leq E_m[\rho_0]$, together with the assumption (\textbf{A2}) that $\inf \Phi \geq 0$:
\begin{equation}
\int_{\mathbb{R}^d} \frac{1}{m} (\mu_m)^m dx \leq   \int_{\mathbb{R}^d} \rho_0 \Phi dx + \int_{\mathbb{R}^d} \frac{1}{m} (\rho_0)^m dx  \leq M+1,
\label{ineq:rho_m_rho_0}
\end{equation}
which upon rearranging gives
\begin{equation}
\int_{\mathbb{R}^d} (\mu_m)^m dx \leq m (M+1).
\label{ineq:A_2}
\end{equation}
Note that for $m> 2$, we have
\begin{equation}
\int_{\{\mu_m\geq1\}} (\mu_m)^m dx \geq \int_{\{\mu_m\geq1\}} \Big( 1 + m(\mu_m-1) + \frac{m(m-1)}{2}(\mu_m-1)^2\Big) dx.
\label{ineq:integral_rho>1}
\end{equation}
Combining the inequalities \eqref{ineq:A_2} and \eqref{ineq:integral_rho>1} together, we have
$$\int_{\mathbb{R}^d} (\mu_m - 1)_+^2 dx \leq \frac{2(M+1)}{m-1} \leq \frac{4(M+1)}{m}.$$
Finally, note that $|\{x: \mu_m(x) \geq 1\}|\leq 1$, and so the Cauchy-Schwarz inequality yields that
\[\int_{\mathbb{R}^d} (\mu_m - 1)_+ dx \leq 2\sqrt{\frac{M+1}{m}}.\qedhere\]
\end{proof}

The following lemma says that for large $m$, we can find a probability density $\tilde \mu_m$ that is close to $\mu_m$ in Wasserstein distance, has maximum density bounded by one, and has potential energy not much larger than $\mu_m$.

\begin{lemma}
\label{lemma:estimate_m_2}
Let $\Phi$ satisfy \textup{(\textbf{A2}) and (\textbf{A3'})}. Under the conditions of Lemma \ref{lemma:estimate_m_1}, there exists a probability density $\tilde \mu_m \in \mathcal{P}_2(\mathbb{R}^d),$ such that $\|\tilde\mu_m\|_{L^\infty(\mathbb{R}^d)} \leq 1$, 
\begin{equation}
\int_{\mathbb{R}^d}  \tilde \mu_m \Phi dx \leq \int_{\mathbb{R}^d} \mu_m\Phi dx+2 \|\Delta \Phi\|_\infty \sqrt{\frac{M+1}{m}},
\label{ineq:energy_difference}
\end{equation}
and $\tilde \mu_m$ is ``close'' to $\mu_m$ in the sense that
\begin{equation}
W_2(\mu_m, \tilde \mu_m) \leq \frac{2(M+1)^{1/4}}{m^{1/4}}.
\label{ineq:w2_small}
\end{equation}
\end{lemma}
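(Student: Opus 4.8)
The plan is to produce $\tilde\mu_m$ by capping $\mu_m$ at height $1$ and reinjecting the small overflow mass $\delta:=\int_{\R^d}(\mu_m-1)_+\,dx$ into the unsaturated region $\{\mu_m<1\}$, doing so at a fixed $O(1)$ length scale and in a center-of-mass preserving way. The quantitative inputs are the smallness of the overflow: Lemma~\ref{lemma:estimate_m_1} gives $\delta\le 2\sqrt{(M+1)/m}$, and its proof also yields the intermediate bound $\int_{\R^d}(\mu_m-1)_+^2\,dx\le 4(M+1)/m$. Some genuine rearrangement is unavoidable here: any crude fix (rescaling, or simply truncating and renormalizing) would either fail to be a probability measure or would push the supremum back above $1$, so the overflow really has to be transported down into free space, and the estimate of Lemma~\ref{lemma:estimate_m_1} is precisely what makes that transport cheap.

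For the construction, the key principle is that at a fixed length scale $R=O(1)$ (chosen with $\omega_d R^d$ safely larger than the total mass $1$), every ball contains strictly more spare capacity than the mass it must absorb, since $\mu_m$ is a probability density:
\[
\int_{B_R(x)}(1-\mu_m)_+\,dy-\int_{B_R(x)}(\mu_m-1)_+\,dy=\omega_d R^d-\int_{B_R(x)}\mu_m\,dy\ \ge\ \omega_d R^d-1\ >\ 0 .
\]
Exploiting this, one transports the overflow measure $(\mu_m-1)_+\,dx$ onto a sub-measure $\sigma\le(1-\mu_m)_+\,dx$ of equal mass by a redistribution in which (i) each increment of mass moves a distance $\lesssim R$, and (ii) the center of mass of the moved mass is preserved (for instance by moving each increment symmetrically about its own location, which is possible because the ``bad'' directions in $B_R$ have measure at most $\tfrac{2}{\omega_d R^d}|B_R|$). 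Setting $\tilde\mu_m:=\min(\mu_m,1)+\sigma$, one has $0\le\tilde\mu_m\le1$ and $\int\tilde\mu_m=(1-\delta)+\delta=1$, so $\tilde\mu_m\in\mathcal{P}_2(\R^d)$ with the required $L^\infty$ bound; and the coupling that leaves $\min(\mu_m,1)$ fixed and realizes the redistribution has cost $\lesssim R^2\delta$, whence $W_2(\mu_m,\tilde\mu_m)^2\lesssim\delta\le 2\sqrt{(M+1)/m}$, which, with the scale $R$ calibrated by the normalization $\|\mu_m\|_{L^1}=1$, is exactly \eqref{ineq:w2_small}.

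For the potential-energy bound \eqref{ineq:energy_difference} I would write $\int\Phi\,d\tilde\mu_m-\int\Phi\,d\mu_m=\int(\Phi(y)-\Phi(x))\,d\gamma(x,y)$ for the redistribution plan $\gamma$, and Taylor-expand $\Phi$ locally. The center-of-mass preservation now pays off: the first-order term $\nabla\Phi(\text{local center})\cdot\int(y-x)\,d\gamma_{\mathrm{loc}}$ drops out on each piece — this step is essential, since under \textup{(\textbf{A2})} and \textup{(\textbf{A3'})} the gradient $\nabla\Phi$ need not be bounded — leaving a second-order term, controlled by $D^2\Phi$ (bounded above by \textup{(\textbf{A3})} together with \textup{(\textbf{A3'})}) against the transported second moments, whose total is $O(\delta)$. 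Arranging in addition that the redistribution spreads the overflow isotropically reduces the Hessian contraction to its trace, so only $\|\Delta\Phi\|_\infty$ enters, and carrying the constants ($R^2$ is essentially forced to be comparable to $2$) produces the factor $2$ in \eqref{ineq:energy_difference}. A cleaner repackaging is to define $\tilde\mu_m$ as the $W_2$-projection of $\mu_m$ onto the geodesically convex set $\{\rho\in\mathcal{P}_2(\R^d):\|\rho\|_\infty\le1\}$: the competitor just built then bounds $W_2(\mu_m,\tilde\mu_m)$ for free, and the ``sandpile'' structure of this projection yields the energy comparison.

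The main obstacle is the tension among the three requirements: capping at $1$ is the only route to $\|\tilde\mu_m\|_\infty\le1$, but then the reinjected overflow must simultaneously (i) land where there is spare capacity, (ii) travel only $O(1)$ so that $W_2$ stays $O(\sqrt\delta)$, and (iii) not climb uphill in $\Phi$ by more than $O(\delta)$. Requirement (i) would break down at small scales because the over-saturated set $\{\mu_m>1\}$ can have large measure — one controls only its overflow mass, not its volume — which is exactly why the fixed, $O(1)$ ``volume $>$ mass'' scale is the right device; and (iii) is delicate precisely because only $\Delta\Phi$, not $\nabla\Phi$ or the full Hessian norm, is assumed bounded, which is what forces the center-of-mass preserving and isotropic choice of the redistribution. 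What remains — a measurable, capacity-respecting realization of the redistribution (a routine selection/marriage argument) together with the bookkeeping of the Taylor remainders — is routine.
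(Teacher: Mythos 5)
Your overall strategy---cap the density, reinject the small overflow at an $O(1)$ length scale, and use symmetry to kill the first-order term in $\Phi$ so that only $\Delta\Phi$ survives---is the right one, but your realization differs from the paper's at the decisive point. The paper does \emph{not} truncate at height $1$: it truncates at height $1-a$ with $a=2\sqrt{(M+1)/m}$, writes $\mu_m=\min\{\mu_m,1-a\}+\mu_m^2$ with $\int\mu_m^2\le 2a$, and sets $\tilde\mu_m=\min\{\mu_m,1-a\}+g*\mu_m^2$, where $g=\tfrac12\chi_{B(0,R)}$ is normalized to integrate to $1$. Young's inequality gives $\|g*\mu_m^2\|_\infty\le 2a\cdot\tfrac12=a$, so the reinjected mass fits under the ceiling \emph{everywhere}: there is no capacity constraint, no saturated set to avoid, and no selection or marriage argument. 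Because $g$ is exactly radial, the center of mass is exactly preserved and the redistribution is exactly isotropic, so the energy estimate reduces to the mean-value bound $\bigl|\fint_{B(x,R)}\Phi\,dy-\Phi(x)\bigr|\le\tfrac12\|\Delta\Phi\|_\infty$ and the constant $2\|\Delta\Phi\|_\infty$ drops out. The lowered truncation level is the one idea your proposal is missing, and it is what collapses the whole construction to a two-line convolution.

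The concrete problem in your version is the isotropy step. Once you cap at $1$ and require $\sigma\le(1-\mu_m)_+$, the redistribution of each increment must avoid the saturated set $\{\mu_m\ge1\}\cap B_R(x)$, an arbitrary measurable set of positive measure; you cannot simultaneously avoid it and spread the increment in a rotation-invariant way, so the reduction of the Hessian contraction to its trace does not happen as described. Your antipodal-pairing device does rescue exact center-of-mass preservation (so the unboundedness of $\nabla\Phi$ is genuinely handled), but the surviving second-order term is then a contraction of $D^2\Phi$ against an anisotropic second-moment tensor, controlled by $\|D^2\Phi\|_\infty$---finite under (\textbf{A3})+(\textbf{A3'}) since $\lambda I\le D^2\Phi$ and $\mathrm{tr}\,D^2\Phi\le\|\Delta\Phi\|_\infty$ give $\|D^2\Phi\|\le\|\Delta\Phi\|_\infty+(d-1)|\lambda|$---but not by $\|\Delta\Phi\|_\infty$ alone. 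So your route yields the lemma with $2\|\Delta\Phi\|_\infty$ in \eqref{ineq:energy_difference} replaced by a constant depending also on $d$ and $\lambda$ (still sufficient for Proposition \ref{prop:one_step_m_infty} after adjusting constants), not the inequality as stated; moreover your $W_2$ bound \eqref{ineq:w2_small} needs $R\le\sqrt2$ while the pairing argument needs $\omega_dR^d>2$, so the constant $2$ is dimension-sensitive. The remaining ``routine'' capacity-respecting selection is also substantially more work than the paper's convolution, so I would strongly recommend adopting the $1-a$ truncation.
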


\begin{proof}
Due to the previous lemma, $\int_{\mathbb{R}^d} (\mu_m - 1)_+ dx \leq 2\sqrt{\frac{M+1}{m}}$ for all $2<m<\infty$ and $h>0$. We denote by $a:= 2\sqrt{\frac{M+1}{m}}$ for short, and note that $a$ is small for large $m$. Next we will give a explicit construction of  $\tilde \mu_m$, such that it satisfies all the requirements.

We begin with breaking $\mu_m$ into the sum
 $$\mu_m(x) = \mu_m^1(x) +\mu_m^2(x),$$ where
$$\mu_m^1(x) := \min\{\mu_m(x), 1-a\}, \quad\quad \mu_m^2(x) := \big(\mu_m(x) - (1-a)\big)_+.$$

The idea is to construct $\tilde \mu_m$ by keeping $\mu_m^1$ and modifying $\mu_m^2$.  We first make the observation that $\mu_m^2$ only contains a small amount of mass: more precisely, 
\begin{equation}
\int_{\mathbb{R}^d} \mu_m^2(x)dx \leq 2a.
\label{ineq:eta_2}
\end{equation} This is due to the following two facts.  First, due to  Lemma \ref{lemma:estimate_m_1}, the mass of $\mu_m$ above 1 cannot exceed $a$. Second, we claim $|\{\mu_m >1-a\}| \leq 1$. To show the claim, suppose not, then we have $\int \min\{\mu_m, 1-a\}dx > 1-a$. As a result, $\int \mu_m dx > (1-a) + a =1$, where the $(1-a)$ corresponds to the mass below $(1-a)$, and $a$ corresponds to the mass squeezed between $(1-a)$ and $1$ due to our (false) assumption.

Let us now construct $\tilde \mu_m$ as follows:
\begin{equation}
\tilde \mu_m(x) := \mu_m^1(x) + (g*\mu_m^2)(x),\label{def:tilde_rho_m}
\end{equation}
where * denotes convolution and  $g(x) := \frac{1}{2}\chi_{B(0,R)},$
where $R(d)$ is the dimensional constant chosen such that $\int_{\mathbb{R}^d} g(x) dx=1$. Note that although $R(d)$ depends on $d$, we indeed have $R(d) \leq 1$ for all $d\geq 1$.

We claim that $\tilde \mu_m$ constructed in \eqref{def:tilde_rho_m} satisfies all the requirements stated in the theorem. First note that  the facts $\int g = 1$ and $g\geq 0$ imply that $\tilde\mu_m$ is nonnegative and has the same mass as $\mu_m$. To show that $\|\tilde \mu_m\|_\infty \leq 1$, it suffices to check $\|g*\mu_m^2\|_\infty\leq a$. Since the mass of $\mu_m^2$ is less than $2a$, this inequality is a direct consequence of Young's inequality: 
$$\|g*\mu_m^2\|_\infty \leq \|\mu_m^2\|_1 \|g\|_\infty \leq 2a \cdot \frac{1}{2} = a.$$

Next we verify that the inequality \eqref{ineq:energy_difference} holds, which is equivalent to
$$\int_{\mathbb{R}^d} (g*\mu_m^2) \Phi dx \leq \int_{\mathbb{R}^d}\mu_m^2 \Phi dx+\|\Delta \Phi\|_\infty a.$$
This can be rewritten as
$$\int_{\mathbb{R}^d} \mu_m^2(x) \Big[ (g*\Phi)(x) - \Phi(x)\Big] dx \leq \|\Delta \Phi\|_\infty a.$$
Since $\mu_m^2$ has mass less than $2a$, it suffices to show that
$$\left| \fint_{B(x,R(d))} \Phi(y) dy - \Phi(x) \right| \leq \frac{1}{2}\|\Delta \Phi\|_\infty \text{ for all }x\in\mathbb{R}^d,$$
where we used the fact that $R(d)\leq 1$ to get the right hand side, and note that $\|\Delta \Phi\|_\infty$ is finite due to (\textbf{A3'}). The proof of this inequality is similar to the proof of the mean value property for harmonic functions, and hence is omitted here.

Finally it remains to show \eqref{ineq:w2_small}, which is equivalent to
\begin{equation}
W_2(\mu_m, \tilde \mu_m)\leq \sqrt{2a}.
\label{ineq:w2_small_2}
\end{equation}
We now heuristically describe a transport plan, which is not necessarily optimal.  First, we keep the mass of $\mu_m^1$ at its original location, so that no transportation cost is induced.  Second, for for every ``particle'' located at $x$ in $\mu_m^2$, the transport plan is to distribute it evenly in the disk $B(x,R(d))$. (Again recall that $R(d)\leq 1$ for any dimension $d\geq 1$.) Since the mass of $\mu_m^2$ is no more than $2a$, the total cost of the transportation plan is bounded by $2aR(d)^2$, which immediately implies \eqref{ineq:w2_small_2}. 
\end{proof}

Now we are ready to state the following one-step estimation, which controls the Wasserstein distance between $\mu_m$ and $\mu_\infty$:

\begin{proposition}
\label{prop:one_step_m_infty}
Let  $\Phi$ satisfy \textup{(\textbf{A2})}  and \textup{(\textbf{A3'})}, and consider the initial data $\rho_0 \in \mathcal{P}_2(\mathbb{R}^d)$ with $\|\rho_0\|_\infty \leq 1$ and $\int \rho_0\Phi dx \leq M$. Let $\lambda$ be as given in \textup{(\textbf{A3})}, and $\lambda^- := -\min\{0,\lambda\}$. For any $0<h<\frac{1}{32\lambda^-}$, let $\mu_m$ and $\mu_\infty$ be as defined in \eqref{eq:one_step_rho_m} for the cases $m$ finite and $m=\infty$ respectively. 
Then the following inequality holds:
$$W_2(\mu_m, \mu_\infty) \leq \frac{1}{m^{1/8}}C(M).$$
\end{proposition}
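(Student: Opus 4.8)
Throughout write $J_m[\rho] := E_m[\rho] + \frac{1}{2h}W_2^2(\rho_0,\rho)$ for $2<m\le\infty$, so that $\mu_m$ and $\mu_\infty$ are the minimizers of $J_m$ and $J_\infty$ respectively. The plan is to compare $\mu_\infty$ not directly with $\mu_m$ (which is awkward, since $\|\mu_m\|_\infty$ may exceed $1$ and then $\mu_m$ is inadmissible for $J_\infty$) but with the surrogate density $\tilde\mu_m$ produced by Lemma~\ref{lemma:estimate_m_2}: it satisfies $\|\tilde\mu_m\|_\infty\le1$, $\int\tilde\mu_m\Phi\,dx\le\int\mu_m\Phi\,dx+2\|\Delta\Phi\|_\infty\sqrt{(M+1)/m}$, and $W_2(\mu_m,\tilde\mu_m)\le2(M+1)^{1/4}m^{-1/4}\le2(M+1)^{1/4}m^{-1/8}$. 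By the triangle inequality it therefore suffices to show $W_2(\mu_\infty,\tilde\mu_m)\le C(M)\,m^{-1/8}$.

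For this I would first exploit strong convexity. Under \textup{(\textbf{A3})} the functional $E_\infty$ is $\lambda$-convex along generalized geodesics, $\rho\mapsto\frac{1}{2h}W_2^2(\rho_0,\rho)$ is $\frac1h$-convex along generalized geodesics based at $\rho_0$, and $\lambda+\frac1h>0$ because $h<\frac{1}{32\lambda^-}$. Applying the standard variational inequality for the minimizing-movement step \cite{ags} along the generalized geodesic from the minimizer $\mu_\infty$ to the admissible competitor $\tilde\mu_m$ gives
\[ \frac{\lambda+1/h}{2}\,W_2^2(\mu_\infty,\tilde\mu_m)\ \le\ J_\infty[\tilde\mu_m]-J_\infty[\mu_\infty]. \]
The core of the argument is to bound the energy gap on the right by $C(M)\,m^{-1/4}$, uniformly for small $h$. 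Here I would combine four ingredients: (i) minimality of $\mu_m$ for $J_m$ together with $\mathcal{S}_m[\mu_\infty]\le\frac1m$ (from $\|\mu_\infty\|_\infty\le1$), which gives $J_m[\mu_m]\le J_m[\mu_\infty]\le J_\infty[\mu_\infty]+\frac1m$; (ii) Lemma~\ref{lemma:estimate_m_2} and $\mathcal{S}_m\ge0$, to replace $E_\infty[\tilde\mu_m]=\int\tilde\mu_m\Phi\,dx$ by $E_m[\mu_m]$ up to an additive $2\|\Delta\Phi\|_\infty\sqrt{(M+1)/m}$; (iii) the triangle inequality $W_2^2(\rho_0,\tilde\mu_m)\le W_2^2(\rho_0,\mu_m)+2W_2(\rho_0,\mu_m)W_2(\mu_m,\tilde\mu_m)+W_2^2(\mu_m,\tilde\mu_m)$, so the transport part of $J_\infty[\tilde\mu_m]$ is the transport part of $J_m[\mu_m]$ plus a controlled error; and (iv) the a priori estimate $W_2^2(\rho_0,\mu_m)\le2h(M+1)$, which follows from $J_m[\mu_m]\le J_m[\rho_0]=E_m[\rho_0]\le M+\frac1m$ (using $\|\rho_0\|_\infty\le1$ and \textup{(\textbf{A2})}) and $E_m\ge0$ (again by \textup{(\textbf{A2})}). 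Putting (i)--(iv) together yields
\[ J_\infty[\tilde\mu_m]-J_\infty[\mu_\infty]\ \le\ \frac1m+2\|\Delta\Phi\|_\infty\sqrt{\tfrac{M+1}{m}}+\frac{\sqrt{2h(M+1)}}{h}\,W_2(\mu_m,\tilde\mu_m)+\frac{1}{2h}\,W_2^2(\mu_m,\tilde\mu_m). \]

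To conclude I would divide by $\tfrac{\lambda+1/h}{2}$ and use that $h<\tfrac{1}{32\lambda^-}$ forces $\tfrac{2}{\lambda+1/h}\le3h$ and $\tfrac{1}{h(\lambda+1/h)}\le\tfrac{32}{31}$; combined with $W_2(\mu_m,\tilde\mu_m)\le2(M+1)^{1/4}m^{-1/4}$ and $h\le1$ (which costs nothing for our purposes, since $h\to0$ in the application to Theorem~\ref{main2}; the constant then depends in addition on $\|\Delta\Phi\|_\infty$, bounded by \textup{(\textbf{A3'})}), every term is bounded by $C(M,\|\Delta\Phi\|_\infty)\,m^{-1/4}$, the slowest-decaying contribution being $\tfrac{1}{h(\lambda+1/h)}\sqrt{2h(M+1)}\,W_2(\mu_m,\tilde\mu_m)\lesssim(M+1)^{3/4}m^{-1/4}$. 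Hence $W_2^2(\mu_\infty,\tilde\mu_m)\le C(M)\,m^{-1/4}$, so $W_2(\mu_\infty,\tilde\mu_m)\le C(M)\,m^{-1/8}$, and the triangle inequality with the bound on $W_2(\mu_m,\tilde\mu_m)$ finishes the proof.

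I expect the main obstacle to be ingredients (iii)--(iv): the transport-cost difference comes with a factor $\tfrac1h$ that must be annihilated exactly by the coercivity constant $(\lambda+\tfrac1h)^{-1}\sim h$ coming from strong convexity, so that the one-step estimate stays bounded as $h\to0$; this is precisely why the sharp a priori bound $W_2^2(\rho_0,\mu_m)\le 2h(M+1)$ is needed (a crude $h$-independent bound would not suffice), and why the comparison must be routed through $\tilde\mu_m$ rather than $\mu_m$ — the construction of $\tilde\mu_m$ localizes the overshoot $(\mu_m-1)_+$ and thereby introduces exactly the $m^{-1/4}$ transport cost of Lemmas~\ref{lemma:estimate_m_1}--\ref{lemma:estimate_m_2} that ultimately fixes the $m^{-1/8}$ rate.
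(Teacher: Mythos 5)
Your proof is correct, but it takes a genuinely different route from the paper's. The paper argues by contradiction: assuming $W_2(\mu_m,\mu_\infty)=Am^{-1/8}$ with $A$ large, it forms the midpoint $\eta$ of the generalized geodesic (based at $\rho_0$) between $\tilde\mu_m$ and $\mu_\infty$ and shows, via the $\lambda$-convexity of $\int\rho\Phi$, the convexity of $\mathcal S_m$ and the $1$-convexity of $W_2^2(\rho_0,\cdot)$ along that geodesic, that $\eta$ would strictly beat at least one of the two minimizers in the \emph{sum} $J_m[\cdot]+J_\infty[\cdot]$ of the two JKO functionals. You instead isolate the $m=\infty$ problem, use the strong convexity of $J_\infty$ along generalized geodesics based at $\rho_0$ (modulus $\lambda+1/h>0$) to get the quantitative variational inequality $\tfrac{\lambda+1/h}{2}W_2^2(\mu_\infty,\tilde\mu_m)\le J_\infty[\tilde\mu_m]-J_\infty[\mu_\infty]$, and then transfer information from the $m$-problem through the minimality of $\mu_m$ (your step (i)) rather than through the combined functional. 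Both arguments route the comparison through $\tilde\mu_m$ and lean on exactly the same ingredients — Lemmas~\ref{lemma:estimate_m_1}--\ref{lemma:estimate_m_2}, the a priori bound $W_2^2(\rho_0,\mu_m)\le 2h(M+1)$, and the AGS convexity results along generalized geodesics (including $\|\mu_t\|_\infty\le1$ along the geodesic, needed so that $J_\infty[\mu_t]$ is finite) — and both produce the same $m^{-1/8}$ rate from the $m^{-1/4}$ transport cost of the surrogate. Your direct version is arguably cleaner: it avoids the choice of a large threshold $A_0$ and makes explicit where each power of $h$ is cancelled. Two small caveats, neither fatal: your final constant depends on $\|\Delta\Phi\|_\infty$ as well as $M$ (consistent with Theorem~\ref{main2}, and the paper's proof has the same dependence), and the $h$-uniformity of the bound requires $h$ bounded above by a fixed constant, which you acknowledge and which the paper also implicitly imposes (its proof restricts to $h<\tfrac{1}{32\|\Delta\Phi\|_\infty}$); this is harmless since the proposition is only applied for small $h$, or for fixed $h$ with $m\to\infty$.
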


\begin{proof}
Let us fix $M$ and $d$. Suppose the statement is false.  Then for an arbitrarily large $A_0 > 0$, there exist $m>2$, $0<h<\frac{1}{32\|\Delta\Phi\|_\infty}$  such that 
\begin{equation}
W_2(\mu_m, \mu_\infty) = A m^{-1/8}, \text{ where }A> A_0.\label{assumption:A}
\end{equation} 
To get a contradiction, we will construct a new probability measure $\eta \in \mathcal{P}_2(\mathbb{R}^d)$ with $\| \eta\|_{L^\infty(\mathbb{R}^d)} \leq 1$ such that the following inequality holds if $A_0$ is chosen to be sufficiently large:
\begin{equation}
\left[ E_m[\eta] +  \frac{1}{2h} W_2^2(\rho_0, \eta) \right] + \left[ E_\infty[\eta] +  \frac{1}{2h} W_2^2(\rho_0, \eta) \right] < \left[ E_m[\mu_m] +  \frac{1}{2h} W_2^2(\rho_0, \mu_m) \right] +  \left[ E_\infty[\mu_\infty] +  \frac{1}{2h} W_2^2(\rho_0, \mu_\infty) \right] . \label{ineq:eta}
\end{equation}
This means that $\eta$ would beat at least one of the minimizers in \eqref{eq:one_step_rho_m} for some $m$ ($m$ may either be finite or $+\infty$), contradicting the definition of $\mu_m$ and $\mu_\infty$.

The probability density $\eta$ is constructed as follows.  Let $\tilde \mu_m$ be the probability density constructed in Lemma \ref{lemma:estimate_m_2},  and we denote by $\tilde{T}_m$ the optimal transport map such that $(\tilde{T}_m)_\# \rho_0 = \tilde \mu_m$.  Similarly, let $T_\infty$ be the optimal transport map such that $(T_\infty)_\# \rho_0 = \mu_\infty$. Then $\eta$ is defined as \begin{equation}
\eta = \Big(\frac{1}{2} \tilde T_m + \frac{1}{2} T_\infty\Big) \# \rho_0.
\label{def:eta}
\end{equation} 
 $\eta$ is thus the midpoint between $\tilde\mu_m$ and $\mu_\infty$ on their generalized geodesics, as defined in Sec 9.2 of \cite{ags}.

Next we will prove that $\eta$ satisfies the inequality \eqref{ineq:eta}. This is done by proving the inequalities \eqref{ineq:entropy_eta}--\eqref{ineq:distance_eta}:
\begin{eqnarray}
\mathcal{S}_m[\eta] + \mathcal{S}_\infty[\eta] &\leq& \mathcal{S}_m[\mu_m] + \mathcal{S}_\infty[\mu_\infty] + \dfrac{1}{m}, \label{ineq:entropy_eta}\\
2\int_{\mathbb{R}^d} \eta \Phi dx &\leq& \int_{\mathbb{R}^d}\mu_m \Phi dx + \int_{\mathbb{R}^d}\mu_\infty \Phi dx + \dfrac{C(M)(\|\Delta \Phi\|_\infty-\lambda) }{\sqrt{m}} - \frac{2A^2\lambda}{m^{1/4}}, \label{ineq:energy_eta}\\
\frac{1}{h} W_2^2(\rho_0, \eta) &\leq& \frac{1}{2h} W_2^2(\rho_0, \mu_m) + \frac{1}{2h} W_2^2(\rho_0, \mu_\infty) +\frac{C(M)- A^2/8 }{h m^{1/4}}. \label{ineq:distance_eta}
\end{eqnarray}
If $A_0$ is chosen to be a sufficiently large number depending only on $M$ and $d$,  since $A>A_0$, the sum of these three  inequalities implies \eqref{ineq:eta} (where we make use of the assumption that $h\leq \frac{1}{32\lambda^-}$), thereby yielding a contradiction.

To show \eqref{ineq:entropy_eta}, it suffices to prove that $\|\eta\|_\infty \leq 1$, since then $S_m[\eta]= \frac{1}{m}\int \eta^m dx \leq \frac{1}{m}$ and $S_\infty[\eta] = 0$. Proposition 9.3.9 of \cite{ags} states that the $L^p$ norm with $p>1$ is convex along the generalized geodesics, and thus 
$$2\|\eta\|_p \leq \|\tilde \mu_m\|_p + \|\mu_\infty\|_p\hbox{ for all }p>1.
$$ Sending $p\to \infty$ in the above inequality yields $\|\eta\|_\infty \leq 1$, since both $\|\tilde \mu_m\|_\infty$ and $\|\mu_\infty\|_\infty$ are bounded by 1. 

\eqref{ineq:energy_eta} comes from the semi-convexity of $\Phi$ given by (\textbf{A3}) (which is a consequence of (\textbf{A3'})). Let $\lambda \in \mathbb{R}$ be as given in (\textbf{A3}). Proposition 9.3.2 in \cite{ags} yields that $\int \rho\Phi dx$ is a $\lambda$-convex functional of $\rho$ along any generalized geodesic, and thus
\begin{equation}
\begin{split}
2\int_{\mathbb{R}^d} \eta \Phi dx &\leq \int_{\mathbb{R}^d} \tilde \mu_m \Phi dx + \int_{\mathbb{R}^d}  \mu_\infty \Phi dx - \frac{1}{4}\lambda W_2^2(\tilde \mu_m, \mu_\infty)\\
&\leq \int_{\mathbb{R}^d} \mu_m \Phi dx + \int_{\mathbb{R}^d}  \mu_\infty \Phi dx + \frac{C(M)\|\Delta \Phi\|_\infty }{m^{1/2}} - \frac{1}{4} \lambda W_2^2(\tilde \mu_m, \mu_\infty),
\label{ineq:energy_eta_1}
\end{split}
\end{equation}
where the last line comes from \eqref{ineq:energy_difference}. Next let us estimate $W_2^2(\tilde \mu_m, \mu_\infty)$.  Due to our assumption \eqref{assumption:A} in the beginning of this proof and the inequality \eqref{ineq:w2_small}, we have
\begin{equation*}
W_2(\tilde\mu_m, \mu_\infty)\leq W_2(\mu_m, \mu_\infty) + W_2(\mu_m, \tilde \mu_m)\leq Am^{-1/8} + C(M)m^{-1/4}.
\end{equation*}
We take the square of the above inequality and apply the inequality $(a+b)^2 \leq 2a^2 + 2b^2$, and obtain
$$W_2^2(\tilde\mu_m, \mu_\infty) \leq 2A^2m^{-1/4} + C(M)m^{-1/2}.
$$
 Plugging this inequality into \eqref{ineq:energy_eta_1} yields \eqref{ineq:energy_eta}.

Finally it remains to show \eqref{ineq:distance_eta}. Due to Lemma 9.2.1 of \cite{ags} $W_2^2(\rho_0,\cdot)$ is 1-convex along generalized geodesics, and thus
\begin{equation}
W_2^2(\rho_0, \eta) \leq ~\frac{1}{2}W_2^2(\rho_0, \tilde \mu_m) + \frac{1}{2} W_2^2(\rho_0, \mu_\infty) - \frac{1}{4}W_2^2(\tilde \mu_m, \mu_\infty).
\label{ineq:distance}
\end{equation}
Now, by the triangle inequality, we have
\begin{equation*}
\begin{split}
W_2^2(\rho_0, \eta) \leq &~\frac{1}{2} \Big( W_2(\rho_0, \mu_m) + W_2(\mu_m, \tilde \mu_m) \Big)^2 + \frac{1}{2}  W_2^2(\rho_0, \mu_\infty) - \frac{1}{4}\Big( W_2(\mu_m, \mu_\infty) - W_2(\mu_m, \tilde \mu_m )\Big)^2\\
\leq & ~\frac{1}{2}W_2^2(\rho_0, \mu_m) + \frac{1}{2}W_2^2(\rho_0, \mu_\infty) - \frac{1}{4} W_2^2(\mu_m, \mu_\infty) + \frac{C(M)}{2m^{1/4}} (1+ W_2(\mu_m, \mu_\infty))\\
\leq & ~\frac{1}{2}W_2^2(\rho_0, \mu_m)^2 + \frac{1}{2}W_2^2(\rho_0, \mu_\infty) + \frac{C(M) - A^2/8}{m^{1/4}}.
\end{split}
\end{equation*}

For the second inequality we used the fact that $W_2(\mu_m, \tilde \mu_m) \leq C(M)m^{-1/4}$ due to Lemma \ref{lemma:estimate_m_2} as well as that $W(\rho_0, \mu_m) \leq C(M)$  for all $m$ and $h$ (otherwise $\mu_m$ would fail to be a minimizer of $\Psi_m$).  For the third inequality we use the assumption that $W_2(\mu_m, \mu_\infty) = A m^{-1/8}$. Finally, dividing both sides of the above inequality by $h$ yields \eqref{ineq:distance_eta}.
\end{proof}

\subsection{Convergence of the continuum solutions as $m\to\infty$}
\label{subsec:proof_main2}
In this subsection, we give a proof of Theorem~\ref{main2}. The proof is done by combining the one-step estimation results in section \ref{subsec:one-step} with the convergence results for discrete solutions as $h\to 0$.
\begin{proof}[\textbf{Proof of Theorem \ref{main2}}]
1.  Note that the assumptions on $\rho_0$ immediately imply that $E_m[\rho_0] \leq M+1$ for all $2<m\leq \infty$. This enables us to apply Theorem \ref{thm:collection}(b): For all time steps $h$ satisfying $0<h<h_0$ (where $h_0$ is a small constant depending on $M, T, \|\Delta \Phi\|_\infty$), we have
\begin{equation}
W_2(\rho_m(t), \rho_{m,h}(t)) \leq C(\lambda)\sqrt{M} e^{-\lambda T}\sqrt{h}\\
=: C\sqrt{h}\text{ for all }t\in[0,T],
\label{ineq:time_step}
\end{equation}
and this inequality holds for both finite $m$ and $m=\infty$.
For notational simplicity, the various constants $C$ appearing in this proof may depend on $M, \| \Delta \Phi\|_\infty, T$, and the value of $C$ may differ from line to line. 

2. Now we fix the small time step $h$ such that $0<h<h_0$, and our goal is to show that  
\begin{equation}
W_2(\rho_{m,h}(t), \rho_{\infty, h}(t)) \leq C\sqrt{h} \text{ for all } t\in[0,T] \text{ when }m \geq Ch^{-12}.
\label{ineq:step2}
\end{equation}  Before we prove this inequality, let us point out the proof is finished once we obtain this: by combining \eqref{ineq:step2} with the inequality \eqref{ineq:time_step} (and note that \eqref{ineq:time_step} holds for both finite $m$ case and $m=\infty$), one immediately has
$$W_2(\rho_{m}(t), \rho_{\infty}(t)) \leq C\sqrt{h}  \text{ for all }t\in[0,T] ~~~\text{ given that }m\geq Ch^{-12},$$
which concludes the proof and would give the rate $W_2(\rho_{m}(t), \rho_{\infty}(t)) \leq C m^{-1/24}$ for $t\in[0,T]$.

3. To prove the inequality \eqref{ineq:step2} in step 2, note that it is equivalent to prove
 \begin{equation}\label{claim}
 W_2(\rho_{m, h}^n, \rho_{\infty, h}^n)\leq C\sqrt{h} \text{ ~~for all }n\leq \frac{T}{h}.
 \end{equation}

From now on we will denote $\rho_{m,h}^n$  by $\rho_m^n$ (and denote $\rho_{\infty, h}^n$ by $\rho_\infty^n$) for notational simplicity.  Proposition \ref{prop:one_step_m_infty} then shows that $W_2(\rho_m^1, \rho_\infty^1)$ is small for sufficiently large $m$. To deal with the case $n>1$ we consider the tree structure as illustrated in Figure \ref{fig:tree}.

\begin{figure} 
\begin{tikzpicture}[>=latex, thick]
\tikzstyle{line} = [draw, -latex]
\tikzstyle{arrow2} = [draw, latex-latex, blue!70]
\tikzstyle{arrow3} = [draw, latex-latex, orange]
\tikzstyle{white_bg}=[rectangle,fill=white]

\pgfmathsetmacro{\h}{2.2}
\pgfmathsetmacro{\w}{1}
\pgfmathsetmacro{\y}{0.25cm}
\pgfmathsetmacro{\x}{0.3cm}

\draw (-5, 0) node[black] (rho_0) {\Large{$\rho_0$}};
\draw (rho_0)+(\h,\w) node[black] (rho_m1) {\Large{$\rho_m^1$}};
\draw (rho_m1)+(\h,\w) node[black] (rho_m2) {\Large{$\rho_m^2$}};
\draw (rho_m2)+(\h,\w) node[black] (rho_m3) {\Large{$\rho_m^3$}};
\draw (rho_m3)+(\h,\w) node[black] (rho_m4) {\Large{$\cdots$}};

\path [line] (rho_0) -- node[pos = 0.45, yshift=\y, black] {$m$} (rho_m1) ;
\path [line] (rho_m1) -- node[pos = 0.45, yshift=\y, black] {$m$} (rho_m2) ;
\path [line] (rho_m2) -- node[pos = 0.45, yshift=\y, black] {$m$} (rho_m3) ;
\path [line] (rho_m3) -- node[pos = 0.45, yshift=\y, black] {$m$} (rho_m4);

\draw (rho_0)+(\h,-\w) node[black] (rho_i1) {\Large{$\rho_\infty^1$}};
\draw (rho_i1)+(\h,-\w) node[black] (rho_i2) {\Large{$\rho_\infty^2$}};
\draw (rho_i2)+(\h,-\w) node[black] (rho_i3) {\Large{$\rho_\infty^3$}};
\draw (rho_i3)+(\h,-\w) node[black] (rho_i4) {\Large{$\cdots$}};

\path [line] (rho_0) -- node[pos = 0.45, yshift=-\y, black] {$\infty$} (rho_i1) ;
\path [line] (rho_i1) -- node[pos = 0.45, yshift=-\y, black] {$\infty$} (rho_i2) ;
\path [line] (rho_i2) -- node[pos = 0.45, yshift=-\y, black] {$\infty$} (rho_i3) ;
\path [line] (rho_i3) -- node[pos = 0.45, yshift=-\y, black] {$\infty$} (rho_i4) ;

\draw (rho_i1)+(\h,\w) node[black] (eta_2) {\Large{$\eta^2$}};
\path [line] (rho_i1) -- node[pos = 0.45, yshift=\y, black] {$m$} (eta_2) ;

\draw (rho_i2)+(\h,\w) node[black] (eta_3) {\Large{$\eta^3$}};
\path [line] (rho_i2) -- node[pos = 0.45, yshift=\y, black] {$m$} (eta_3) ;

\draw (rho_i3)+(\h,\w) node[black] (eta_4) {\Large{$\cdots$}};
\path [line] (rho_i3) -- node[pos = 0.45, yshift=\y, black] {$m$} (eta_4) ;

\path [arrow3] (rho_i2) to  node[white_bg, xshift=-1.6, pos = 0.5] {$\leq \delta$} (eta_2);
\path [arrow3] (rho_i3) to  node[white_bg, xshift=-1.6, pos = 0.5] {$\leq \delta$} (eta_3);

\path [arrow2] (rho_m1) to [bend left=5] node[pos = 0.5, xshift=0.48cm] {$d_1$\textcolor{orange}{$\leq \delta$}} (rho_i1);
\path [arrow2] (rho_m2) to [bend left=17] node[pos = 0.5, xshift=\y] {$d_2$} (rho_i2);
\path [arrow2] (rho_m3) to [bend left=20] node[pos = 0.5, xshift=\y] {$d_3$} (rho_i3);
\end{tikzpicture}

\caption{Illustration of the tree structure. \label{fig:tree}}
\end{figure}
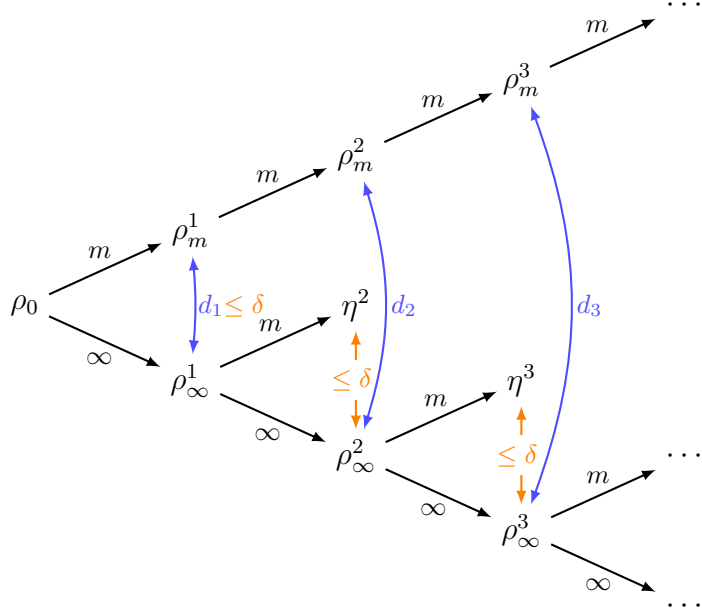
Here for $n\geq 2$, $\eta^n$ is defined as below:
$$\eta^n :=\underset{\rho \in \mathcal{P}_2(\mathbb{R}^d)}{\argmin} \left\{E_m[\rho] + \frac{1}{2h} W_2^2(\rho_\infty^{n-1}, \rho) \right\} ~~\text{ for } n\geq 2.$$
We point out that for $n\geq 2$, $\|\rho_\infty^{n-1}\|_\infty \leq 1$ holds by definition, and in addition we have $\int \rho_\infty^{n-1} \Phi dx \leq \int \rho_0\Phi dx \leq M$. Hence by taking $\rho_\infty^{n-1}$ as the initial data, the one-step estimate in Proposition \ref{prop:one_step_m_infty} yields that
\begin{equation}
W_2(\rho_\infty^n, \eta^n) \leq \delta := C(M) m^{-1/8} \text{ for all } n\geq 2.
\label{def:delta}
\end{equation}
Let us denote $d_n := W_2(\rho_m^n, \rho_\infty^n)$, which satisfies
\begin{equation}
d_n \leq W_2(\rho_m^n, \eta^n) + W_2(\eta^n, \rho_\infty^n)\leq W_2(\rho_m^n, \eta^n) + \delta,
\label{ineq:d_n1}
\end{equation}
where $\delta$ is as defined in \eqref{def:delta}. Now it remains to control $W_2(\rho_m^n, \eta^n)$. Note that $\rho_m^n$ and $\eta^n$ are minimizers given by the discrete-time scheme \eqref{eq:def_jko} with the same free energy functional $E_m$, but with different initial data $\rho_m^{n-1}$ and $\rho_\infty^{n-1}$. To estimate $d_n$ in terms of $d_{n-1}$, we use Lemma 4.2.4 of \cite{ags} which states that the Wasserstein distance between two discrete solutions does not grow too fast. More precisely, it gives the following inequality
\begin{equation}
\begin{split}
W_2^2(\rho_m^n, \eta^n) &\leq e^{-2 \lambda^- h}\left[W_2^2(\rho_m^{n-1}, \rho_\infty^{n-1}) + h \big( E_m[\rho_m^{n-1}] - E_m[\rho_m^{n}]\big)\right]\\
&\leq e^{-2 \lambda^- h}\left(d_{n-1}^2 + h a_{n-1}\right),
\end{split}
\label{ineq:d_n2}
\end{equation}
where $\lambda^- := -\min\{\lambda,0\}$, with $\lambda$ as given in \textup{(\textbf{A3})}. We denote $a_{n-1} := E_m[\rho_m^{n-1}] - E_m[\rho_m^n]$, which satisfies the following properties:
\begin{equation}
a_n \geq 0 \text{ for all }n \in \mathbb{N}^+, \text{ and }\sum_{n=0}^\infty a_n \leq M+1.
\label{assumption:a_n}
\end{equation}
Finally, we plug  \eqref{ineq:d_n2} into \eqref{ineq:d_n1} to obtain the following family of inequalities:
\begin{eqnarray}
\nonumber d_1 &\leq& \delta\\
d_n &\leq& e^{-2 \lambda^- h}\sqrt{d_{n-1}^2 + h a_{n-1} }+ \delta \quad\text{for }n=2,3,\ldots.
\label{ineq:dn}
\end{eqnarray}

4. We next focus on the inequality \eqref{ineq:dn}, and our goal is to show that $d_{\frac{T}{h}} \leq C\sqrt{h}$ for $\delta$ sufficiently small (more precisely, $\delta \leq h^{3/2} $ would be enough).  By taking the square of \eqref{ineq:dn} and applying the inequality $2ab \leq ha^2 + \frac{b^2}{h}$, we obtain
\begin{eqnarray}
\nonumber d_n^2 &\leq& (1+h)e^{-4\lambda^- h} (d_{n-1}^2 + h a_{n-1}) + (1+\frac{1}{h})\delta^2\\
&\leq&  (1+ C h) d_{n-1}^2 + h \underbrace{(2a_{n-1} + 2h)}_{:=b_{n-1}},
\label{ineq:dn_2}
\end{eqnarray}
where in the last line we let $\delta \leq h^{3/2}$ so that $(1+\frac{1}{h})\delta^2 \leq 2h^2$. Also note that $b_n := 2a_n + 2h$ satisfies $\sum_{n=0}^{T/h} b_n \leq 2(M+T+1)$. Now by dividing by $(1+Ch)^n$ on both sides of \eqref{ineq:dn_2} and summing the inequality from $2$ to $n$, we obtain that
$$d_n^2 \leq d_1^2 (1+Ch)^{n-1} + \sum_{k=1}^n h b_k (1+Ch)^{n-k} \text{ for all }n.
$$
Hence as a result, we see that $W_2(\rho_m^{T/h}, \rho_\infty^{T/h}) = d_{T/h}$ satisfies
\begin{equation}
d_{T/h} \leq \sqrt{e^{CT} h^3 + 2(M+T+1)e^{CT} h} \leq C\sqrt{h}
\end{equation}
as long as $\delta \leq h^{3/2}$ (recall that $\delta =Cm^{-1/8}$, hence it is equivalent with $m > h^{-12}$), and so we are done.
\end{proof}

\section{Comparison principle and long-time behavior for gradient flow solutions}

\subsection{Comparison principle for the discrete-time solutions}
\label{subsec:comparison}

In the beginning of section \ref{sec:m_to_infty}, we have defined the discrete-time scheme \eqref{eq:def_jko} for the porous medium equation with drift \eqref{pme_drift}. Since the comparison principle for the viscosity solutions of \eqref{pme_drift} is well-known (see e.g. \cite{kl}), it is natural to ask whether the comparison principle holds for the discrete-time solutions generated by \eqref{eq:def_jko} as well. In this section we prove that this is indeed true, but the proof is quite different from the continuous case. In fact comparison principle-type results have been shown between discrete gradient flow solutions with $L^2$ distances, for instance in \cite{chambolle},  \cite{gk}, etc. The novelty in our result is that we address the discrete gradient flow solutions with $W_2$ distances, for which nonlocal perturbation arguments are necessary.

In order to define the scheme for two ordered initial data, we need to consider non-negative measures which do not necessarily integrate to 1. We denote by $\mathcal{P}_{2,A}(\mathbb{R}^d)$ the set of non-negative measures which integrate to $A>0$ and have finite second moment.  We also generalize the Wasserstein distance $W_2$ as follows:  For two regular measures $\rho_1, \rho_2 \in \mathcal{P}_{2,A}(\mathbb{R}^d)$, we define $W_2(\rho_1, \rho_2)$ as
$$W_2^2(\rho_1, \rho_2) :=\inf_{T\#\rho_1 = \rho_2}\int_{\mathbb{R}^d} |T(x)-x|^2 \rho_1(x) dx.$$

Next we state the comparison result. 
\begin{theorem} Let $\Phi$ satisfy \textup{(\textbf{A3})}.  For $2<m\leq \infty$, consider the two densities $\rho_{01}\in \mathcal{P}_{2,M_1}(\mathbb{R}^d)$, $\rho_{02} \in \mathcal{P}_{2,M_2}(\mathbb{R}^d)$ with the property $M_1\leq M_2$ and $\rho_{01} \leq \rho_{02}$ a.e. (In the case $m=\infty$, we require in addition that $\|\rho_{0i}\|_\infty \leq 1$ for $i=1,2$).   For  given $h>0$, let $\rho_1, \rho_2$ be the respective minimizers of the following schemes:
\begin{equation}
\rho_i := \underset{\rho \in \mathcal{P}_{2,M_i}(\mathbb{R}^d)} {\argmin}\mathcal{F}_i(\rho):= \underset{\rho \in \mathcal{P}_{2,M_i}(\mathbb{R}^d)} {\argmin} \left[ E_m[\rho] + \frac{1}{2h}W_2^2(\rho, \rho_{0i})\right] \quad\text{ for }i=1,2.
\label{def:rho_i}
\end{equation}
Then $\rho_1 \leq \rho_2$ almost everywhere.
\label{thm:comparison}
\end{theorem}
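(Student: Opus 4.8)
The plan is to argue by contradiction, using the uniqueness of the minimizers $\rho_1,\rho_2$ (strict convexity of $E_m$ in the linear structure for $m<\infty$, and $\lambda$-convexity along generalized geodesics, Theorem~\ref{thm:collection}, for $m=\infty$). Suppose the conclusion fails, so $A:=\{\rho_1>\rho_2\}$ has positive measure and $\delta_0:=\int_A(\rho_1-\rho_2)\,dx>0$. Since $M_1\le M_2$, the complementary gap set $B:=\{\rho_1<\rho_2\}$ carries surplus mass $\int_B(\rho_2-\rho_1)\,dx=\delta_0+(M_2-M_1)\ge\delta_0$. I would construct competitors $\tilde\rho_1\in\mathcal P_{2,M_1}(\mathbb R^d)$, $\tilde\rho_2\in\mathcal P_{2,M_2}(\mathbb R^d)$ (with $\|\tilde\rho_i\|_\infty\le 1$ in the case $m=\infty$) with $\tilde\rho_1\neq\rho_1$ and
$$\mathcal F_1(\tilde\rho_1)+\mathcal F_2(\tilde\rho_2)\le\mathcal F_1(\rho_1)+\mathcal F_2(\rho_2).$$
Once this is established we are done: since $\rho_i$ is the unique minimizer of $\mathcal F_i$ on $\mathcal P_{2,M_i}(\mathbb R^d)$ we have $\mathcal F_i(\tilde\rho_i)\ge\mathcal F_i(\rho_i)$, with \emph{strict} inequality for $i=1$ because $\tilde\rho_1\neq\rho_1$, and adding the two contradicts the displayed inequality. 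Note that this \emph{sandwiching} mechanism works even though no single competitor can beat a minimizer of its own functional: it is the \emph{sum} over the two problems that is controlled, as in the $L^2$ arguments of \cite{chambolle,gk}, but the $W_2$ terms force a genuinely nonlocal construction.

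\textbf{Construction of the competitors.} Write $w:=(\rho_1-\rho_2)_+$ (supported in $A$, mass $\delta_0$) and $w_B:=(\rho_2-\rho_1)_+$ (supported in $B$, mass $\ge\delta_0$), and let $\gamma_i$ be an optimal plan from $\rho_{0i}$ to $\rho_i$. Disintegrating $\gamma_1$ along the splitting $\rho_1=\min\{\rho_1,\rho_2\}+w$, let $\pi\le\gamma_1$ be the part whose second marginal is $w$; its first marginal $\alpha$ is a submeasure of $\rho_{01}$ of mass $\delta_0$. The crucial point is that, \emph{because $\rho_{01}\le\rho_{02}$}, $\alpha$ is also a submeasure of $\rho_{02}$, so the part of $\gamma_2$ with first marginal $\alpha$ is well defined; transporting $\alpha$ through it (followed, if necessary, by a rearrangement that pushes the resulting mass into $B$ under the profile $w_B$) yields a mass-$\delta_0$ measure $\beta$ with $\beta\le w_B$, reached from $\rho_{02}$ from a source $\beta'\le\rho_{02}$. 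I then set
$$\tilde\rho_1:=\min\{\rho_1,\rho_2\}+\beta,\qquad \tilde\rho_2:=\rho_2-\beta+w .$$
Then $\tilde\rho_1+\tilde\rho_2=\rho_1+\rho_2$, the masses are $M_1$ and $M_2$, one checks $0\le\tilde\rho_i\le\max\{\rho_1,\rho_2\}\le 1$ in the case $m=\infty$ (on $B$ one uses $\rho_1+\beta\le\rho_2$, on $A$ one uses $\rho_2+w=\rho_1$), and $\tilde\rho_1\neq\rho_1$ since $\beta$ and $w$ have disjoint supports. Transporting the unchanged pieces by the corresponding restrictions of $\gamma_1,\gamma_2$ and re-routing the freed source masses ($\alpha$ to $\beta$ and $\beta'$ to $w$) along the maps prescribed above produces admissible plans for $\tilde\rho_1$ and $\tilde\rho_2$; when $\beta$ coincides with the $\gamma_2$-image of $\alpha$ the total transport cost is \emph{exactly} $W_2^2(\rho_1,\rho_{01})+W_2^2(\rho_2,\rho_{02})$, and in general it should not exceed it.

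\textbf{Energy comparison and the main obstacle.} It remains to compare the two halves of the functional for $(\tilde\rho_1,\tilde\rho_2)$ and $(\rho_1,\rho_2)$. The potential part is automatic since $\int(\tilde\rho_1+\tilde\rho_2)\Phi\,dx=\int(\rho_1+\rho_2)\Phi\,dx$. The internal part is controlled by the pointwise convexity of $t\mapsto\frac1m t^m$ (this is where $m>2$ is used, to have this function twice differentiable with $s''(0)=0$): at each point $(\tilde\rho_1,\tilde\rho_2)$ has the same sum as $(\rho_1,\rho_2)$ but, by the choice $\beta\le\min\{w,w_B\}$ pointwise, a no-larger spread $|\tilde\rho_1-\tilde\rho_2|\le|\rho_1-\rho_2|$, hence $\mathcal S_m[\tilde\rho_1]+\mathcal S_m[\tilde\rho_2]\le\mathcal S_m[\rho_1]+\mathcal S_m[\rho_2]$ (while $\mathcal S_\infty$ contributes $0$ on both sides once $\|\tilde\rho_i\|_\infty\le 1$ is known). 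Putting these together with the transport estimate gives the required inequality for $\mathcal F_1+\mathcal F_2$. The delicate point — the genuine departure from the $L^2$ setting — is to choose the rearranged chunk $\beta$ \emph{and} its transport simultaneously so that (i) $\beta$ has mass $\delta_0$ with $\beta\le w$ on $A$ and $\beta\le w_B$ on $B$ (needed for the convexity estimate and the $L^\infty$ bound), and (ii) the cost of re-routing does not increase beyond what the convexity gain can absorb; this is exactly where $\rho_{01}\le\rho_{02}$ has to be exploited in a nonlocal way. Measure-theoretic care is also needed (disintegration of possibly non-unique optimal plans, the sets $\{\rho_i=0\}$), and the borderline case $m=\infty$ either requires carrying the constraint $\|\cdot\|_\infty\le1$ through the construction as above or, alternatively, can be obtained by passing to the limit $m\to\infty$ in the already-established comparison using the one-step convergence $\mu_m\to\mu_\infty$ of Proposition~\ref{prop:one_step_m_infty}.
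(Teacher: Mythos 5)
Your overall architecture (argue by contradiction, exchange mass between the two problems, control the \emph{sum} $\mathcal F_1+\mathcal F_2$, exploit $\rho_{01}\leq\rho_{02}$ to make the sources comparable, and use convexity of $t\mapsto t^m$ to compare internal energies at fixed sum) is exactly the right spirit, and your fallback of obtaining $m=\infty$ from finite $m$ via Proposition~\ref{prop:one_step_m_infty} is precisely what the paper does. The gap is in the Wasserstein step, and it is the heart of the matter, not a technicality. Your construction of $\beta$ faces a dilemma you do not resolve: if $\beta$ is the image of $\alpha$ under (the disintegration of) $\gamma_2$, the total transport cost is preserved, but then $\beta$ is merely some submeasure of $\rho_2$ with no reason to satisfy $\beta\leq(\rho_2-\rho_1)_+$ pointwise, so both the spread estimate $|\tilde\rho_1-\tilde\rho_2|\leq|\rho_1-\rho_2|$ and the constraint $\|\tilde\rho_1\|_\infty\leq1$ can fail; if instead you post-compose with a rearrangement forcing $\beta\leq(\rho_2-\rho_1)_+$, the assertion that the re-routed cost ``should not exceed'' the original is unsupported. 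Indeed the swap replaces the couplings $(\alpha\to w)$, $(\beta'\to\beta)$ by $(\alpha\to\beta)$, $(\beta'\to w)$, and for quadratic cost such an exchange changes the cost by cross terms of the form $2(x_1-x_2)\cdot(y_1-y_2)$ with no sign; cyclical monotonicity, if anything, suggests that destination swaps \emph{increase} cost. So the key inequality $\mathcal F_1(\tilde\rho_1)+\mathcal F_2(\tilde\rho_2)\leq\mathcal F_1(\rho_1)+\mathcal F_2(\rho_2)$ is not established.

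The paper's proof sidesteps exactly this by making the exchange infinitesimal rather than global. It selects a positive-measure set $B$ of \emph{common} sources (using the mass-counting consequence of $\rho_{01}\leq\rho_{02}$, namely $|T_1^{-1}(\Omega)\setminus T_2^{-1}(\Omega)|>0$) on which $T_1$ lands in $\{\rho_1>\rho_2+\delta\}$ and $T_2$ lands in $\{\rho_1\leq\rho_2\}$, takes $\rho_\epsilon=\epsilon\rho_{01}\chi_B$, and writes the two minimality inequalities obtained by swapping the destination of $\rho_\epsilon$ between $T_1$ and $T_2$ (Lemma~\ref{lemma:part}). In the sum of these two inequalities the transport costs $E[T_1]$ and $E[T_2]$ appear on both sides and cancel \emph{exactly} — no cost comparison is ever needed — while the internal-energy terms contribute a first-order gain of size $\epsilon\,(\delta/2)^{m-1}\|\rho_1\|_{L^1(B)}$ against $O(\epsilon^2)$ errors (this is where $m>2$ and the truncation $\rho_i\leq1/\delta$ enter), yielding the contradiction for small $\epsilon$. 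If you want to salvage a global construction you would have to prove the cost inequality for your swapped plans, which I do not believe is true in general; the linearization in $\epsilon$ is what makes the argument close.
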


\begin{remark}
The proof of above theorem does not directly use the semi-convexity of $\Phi$, except to guarantee the existence of the $\rho_i$.
\end{remark}

Before we prove Theorem \ref{thm:comparison}, we first state and prove the following simple lemma, which  can be informally stated as follows: Given that $\rho_1$ is the minimizer for the discrete scheme in \eqref{def:rho_i} and $T_1$ is the optimal map between $\rho_{01}$ and $\rho_1$, if a part of $\rho_{01}$ is forced to be transferred by the map $T_1$, then $T_1$ is still the optimal map for the rest of $\rho_{01}$. 
\begin{lemma}
Let $2<m<\infty$, and let $h$ and $\rho_{01}$ be as given in Theorem \ref{thm:comparison}. We denote by $\rho_1$ the minimizer of $\mathcal{F}_1$ as given by \eqref{def:rho_i}, and let $T_1$ be the optimal mapping such that $T_1 \# \rho_{01} = \rho_1$.  Consider an arbitrary function $\eta: \mathbb{R}^d \to \mathbb{R}$ such that $0 \leq \eta(x) \leq 1$ for all $x\in \mathbb{R}^d$, and let $\varphi(x) := T_1 \# ((1-\eta)\rho_{01})$. Then $T_1 \# (\eta \rho_{01})$ minimizes 
$$ 
\mathcal{\tilde F}(\rho) := \int_{\mathbb{R}^d} \left( \frac{1}{m}(\varphi + \rho)^m + \rho \Phi \right) dx + \frac{1}{2h} W_2^2(\eta \rho_{01}, \rho)
 $$
among all $\rho \in \mathcal{P}_{2,\tilde M} (\mathbb{R}^d)$, where $\tilde M = \int_{\mathbb{R}^d} \eta \rho_{01} dx$.
\label{lemma:part}
\end{lemma}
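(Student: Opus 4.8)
The plan is to reduce the asserted minimality to the minimality of $\rho_1$ for $\mathcal{F}_1$, by splitting every relevant quantity along the decomposition $\rho_{01}=\eta\rho_{01}+(1-\eta)\rho_{01}$. Two elementary facts are needed at the outset. First, since the pushforward operation is additive and monotone, $\varphi+T_1\#(\eta\rho_{01})=T_1\#\rho_{01}=\rho_1$ as measures, and both $\varphi$ and $T_1\#(\eta\rho_{01})$ are dominated by $\rho_1$; in particular they are absolutely continuous with densities in $L^m$, so $\tilde{\mathcal{F}}\big(T_1\#(\eta\rho_{01})\big)<\infty$. Second, the restriction of an optimal map stays optimal: the plan $(\mathrm{id}\times T_1)\#\rho_{01}$ has cyclically monotone support, hence so does its submeasure $(\mathrm{id}\times T_1)\#(\eta\rho_{01})$ (because $\eta\rho_{01}\le\rho_{01}$), and since all measures involved have finite second moments, a plan with cyclically monotone support is optimal (see \cite{ags}). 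Therefore
\[
W_2^2\big(\eta\rho_{01},\,T_1\#(\eta\rho_{01})\big)=\int_{\mathbb{R}^d}|T_1(x)-x|^2\,\eta(x)\rho_{01}(x)\,dx.
\]

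Next I would rewrite $\mathcal{F}_1(\rho_1)$ using $\rho_1=\varphi+T_1\#(\eta\rho_{01})$: its internal energy is $\frac1m\int(\varphi+T_1\#(\eta\rho_{01}))^m\,dx$, its potential energy splits as $\int\varphi\Phi\,dx+\int T_1\#(\eta\rho_{01})\,\Phi\,dx$, and $W_2^2(\rho_{01},\rho_1)=\int|T_1(x)-x|^2\rho_{01}(x)\,dx$ splits along $\eta$ and $1-\eta$. Combining these with the displayed identity gives
\[
\mathcal{F}_1(\rho_1)=K+\tilde{\mathcal{F}}\big(T_1\#(\eta\rho_{01})\big),\qquad K:=\int_{\mathbb{R}^d}\varphi\Phi\,dx+\frac{1}{2h}\int_{\mathbb{R}^d}|T_1(x)-x|^2(1-\eta(x))\rho_{01}(x)\,dx,
\]
where $K$ does not depend on the argument of $\tilde{\mathcal{F}}$.

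Finally, let $\rho^\ast\in\mathcal{P}_{2,\tilde M}(\mathbb{R}^d)$ be arbitrary; we may assume $\tilde{\mathcal{F}}(\rho^\ast)<\infty$, the other case being vacuous. Then $\varphi+\rho^\ast\in\mathcal{P}_{2,M_1}(\mathbb{R}^d)$ (its total mass is $\int(1-\eta)\rho_{01}+\tilde M=M_1$ and its second moment is finite), so it is admissible for $\mathcal{F}_1$. Gluing the transport map $T_1$ on $(1-\eta)\rho_{01}$ with an optimal plan from $\eta\rho_{01}$ to $\rho^\ast$ produces a (not necessarily optimal) plan from $\rho_{01}$ to $\varphi+\rho^\ast$, whence $W_2^2(\rho_{01},\varphi+\rho^\ast)\le\int|T_1(x)-x|^2(1-\eta(x))\rho_{01}(x)\,dx+W_2^2(\eta\rho_{01},\rho^\ast)$. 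Splitting the potential energy of $\varphi+\rho^\ast$ exactly as before then yields
\[
\mathcal{F}_1(\varphi+\rho^\ast)\le K+\tilde{\mathcal{F}}(\rho^\ast).
\]
Since $\rho_1$ minimizes $\mathcal{F}_1$ over $\mathcal{P}_{2,M_1}(\mathbb{R}^d)$, we conclude $\tilde{\mathcal{F}}\big(T_1\#(\eta\rho_{01})\big)=\mathcal{F}_1(\rho_1)-K\le\mathcal{F}_1(\varphi+\rho^\ast)-K\le\tilde{\mathcal{F}}(\rho^\ast)$, which is the claim.

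The only step that is not pure bookkeeping is the stability of optimality under restriction of the source measure; I expect this — together with checking that the glued plan is admissible and that the densities in sight lie in $L^m$ by domination by $\rho_1$ — to be the main (though modest) technical point. Everything else follows from additivity of pushforward, additivity of the internal and potential energies under the decomposition $\rho_1=\varphi+T_1\#(\eta\rho_{01})$, and the definition of $W_2^2$ as an infimum over transport plans.
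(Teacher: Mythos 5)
Your proof is correct and follows essentially the same route as the paper's: decompose $\rho_1=\varphi+T_1\#(\eta\rho_{01})$, glue $T_1$ acting on $(1-\eta)\rho_{01}$ with a competitor's plan on $\eta\rho_{01}$ to build an admissible competitor for $\mathcal{F}_1$, and invoke the minimality of $\rho_1$. The only difference is cosmetic: the paper argues by contradiction, whereas you argue directly and therefore spell out the restriction property of optimal plans, which the paper uses implicitly in the final equality $\tilde{\mathcal F}(T_1\#(\eta\rho_{01}))+\int\varphi\Phi\,dx+\frac{1}{2h}W_2^2((1-\eta)\rho_{01},\varphi)=\mathcal{F}_1(\rho_1)$.
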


\begin{proof}
Suppose that the minimum of $\mathcal{\tilde F}$ is achieved by another measure $\tilde \rho \in \mathcal{P}_{2,\tilde M} (\mathbb{R}^d)$, such that $\mathcal{\tilde F}(\tilde \rho) < \mathcal{\tilde F}(T_1 \# (\eta \rho_{01}))$. We denote by $\tilde T$ the optimal map such that $\tilde T \# (\eta \rho) = \tilde \rho$. The claim is then that we can find a better transfer plan of $\rho_{01}$ than $\rho_1$ in \eqref{def:rho_i}, yielding a contradiction. We construct the transfer plan as follows. 

First, we separate $\rho_{01}$ into two parts: $\eta \rho_{01}$ and $(1-\eta) \rho_{01}$. Then we use $T_1$ to push forward $ (1-\eta) \rho_{01}$, and use $\tilde T$ to push forward $\eta \rho_{01}$. The resulting measure would be equal to $\varphi + \tilde \rho$. Then it follows that 
\begin{equation}\label{ineq:wts_f}
\begin{split}
\mathcal{F}_1(\varphi+\tilde \rho)  &=  \int_{\mathbb{R}^d} \Big( \frac{1}{m}(\varphi + \tilde \rho)^m + (\varphi + \tilde\rho) \Phi \Big) dx + \frac{1}{2h} W_2^2(\rho_{01}, (\varphi + \tilde \rho))\\
&\leq   \int_{\mathbb{R}^d} \Big( \frac{1}{m}(\varphi + \tilde \rho)^m + (\varphi + \tilde\rho) \Phi \Big) dx + \frac{1}{2h} W_2^2(\eta \rho_{01}, \tilde \rho) + \frac{1}{2h} W_2^2((1-\eta)\rho_{01}, \varphi)\\
&= \mathcal{\tilde F}(\tilde \rho) + \int_{\mathbb{R}^d}  \varphi \Phi dx +  \frac{1}{2h} W_2^2((1-\eta)\rho_{01}, \varphi)\\
&< \mathcal{\tilde F}(T_1 \# (\eta \rho)) + \int_{\mathbb{R}^d}  \varphi \Phi dx +  \frac{1}{2h} W_2^2((1-\eta)\rho_{01}, \varphi)\\
&= \mathcal{F}_1(T_1 \# \rho) = \mathcal{F}_1(\rho_1),
\end{split}
\end{equation}
which contradicts the fact that $\rho_1$ is the minimizer of \eqref{def:rho_i} and so we are done.
\end{proof}

\noindent\begin{proof}[\textbf{Proof of  Theorem \ref{thm:comparison}}]

First we point out that  once we prove the comparison result for all \\$2<m<\infty$, it will be automatically true for the case $m=\infty$ as well, due to the one-step estimate in Proposition \ref{prop:one_step_m_infty}. To see this,  let us denote by $\rho_{i,m}$ the minimizer $\rho_i$ when the free energy is $E_m$.  Then Proposition \ref{prop:one_step_m_infty} gives us $\rho_{i,m} \to \rho_{i,\infty}$ as $m\to \infty$ in Wasserstein distance. Therefore if we know that $\rho_{1,m} \leq \rho_{2,m}$ a.e. for all $2<m<\infty$ then it directly follows that $\rho_{1,\infty} \leq \rho_{2,\infty}$ a.e.

Due to the above discussion, it suffices to prove the comparison principle for any fixed $m$ satisfying $2<m<\infty$.  Let $T_i$ denote the optimal map such that $T_i \# \rho_{0i} = \rho_i$ for $i=1,2$.  We prove by contradiction and suppose $\Omega := \{\rho_1>\rho_2\}$ has non-zero measure. We first claim that
 \begin{equation}
 |T_1^{-1}(\Omega) \backslash T_2^{-1}(\Omega)| > 0,
\label{ineq:omega} 
 \end{equation}
which directly follows from the inequality below:
\begin{equation}
\begin{split}
\int_{T_1^{-1}(\Omega)} \rho_{01} dx &= \int_{\Omega} \rho_1 dx \text{\quad(since $T_1\#\rho_{01} = \rho_1$)}\\
&> \int_{\Omega} \rho_2 dx \text{\quad(from the definition of $\Omega$ and the assumption that $|\Omega|>0$)}\\
&= \int_{T_2^{-1}(\Omega)} \rho_{02} dx  \text{\quad(since $T_2\#\rho_{02} = \rho_2$)}\\
&\geq \int_{T_2^{-1}(\Omega)} \rho_{01} dx \text{\quad(since $\rho_{01}\leq\rho_{02}$)}.
\end{split}
\end{equation}

Let $\Omega_\delta = \{x\in\mathbb{R}^d: \rho_1(x) > \rho_2(x)+\delta\}$, and let $A_\delta = \{x\in\mathbb{R}^d:\rho_1(T_1(x)) \leq \frac{1}{\delta}, \rho_2(T_2(x)) \leq \frac{1}{\delta}\}$. Since $\cup_{\delta>0}\Omega_\delta=\Omega$ and $\cup_{\delta>0}A_\delta =\mathbb{R}^d$,  \eqref{ineq:omega} yields that
\begin{equation}
 \Big|\big(T_1^{-1}(\Omega_\delta) \cap A_\delta \big) \backslash T_2^{-1}(\Omega)\Big| > 0 \hbox{ for sufficiently small } \delta>0.
\label{ineq:omega_delta}
\end{equation}
From now on we fix $\delta$ such that the above inequality is true, and denote 
$$B := \big(T_1^{-1}(\Omega_\delta) \cap A_\delta\big) \backslash T_2^{-1}(\Omega).$$

By definition of the set $B$, it immediately follows that $T_1$ maps $B$ into the set where $\rho_2 + \delta<\rho_1\leq\frac{1}{\delta}$, while $T_2$ maps $B$ into the set where $\rho_1 \leq \rho_2 \leq \frac{1}{\delta}$. (Note that these inequalities hold in the a.e. sense). These facts are illustrated in Figure \ref{fig:definition_B}.

\begin{figure}[h!]

\begin{tikzpicture}
\pgfmathsetmacro{\scale}{0.8}
\definecolor{rho1}{RGB}{150,0,188}
\definecolor{rho2}{RGB}{0,150,255}

\draw[black] (-6.5*\scale,0)--(0.5* \scale,0);

\draw [rho1, thick] (-5.3*\scale,0) [out = 80, in = 210] to (-4*\scale, 2.1*\scale) [out = 30, in = 180] to (-3*\scale,2.4*\scale) [out = 0, in = 120] to (-0.8*\scale,0);
\fill[pattern=north east lines wide, pattern color= rho1] (-4*\scale,0) -- (-4*\scale, 2.1*\scale) [out = 30, in = 180] to (-3*\scale,2.4*\scale)--  (-3*\scale,0) -- cycle;

\draw [rho2, thick] (-5.8*\scale,0) [out = 65, in = 210] to (-4*\scale, 2.6*\scale) [out = 30, in = 180] to (-3*\scale,2.9*\scale) [out = 0, in = 130] to (0,0);
\fill[pattern=north west lines wide, pattern color= rho2] (-4*\scale,0) -- (-4*\scale, 2.6*\scale) [out = 30, in = 180] to (-3*\scale,2.9*\scale)--  (-3*\scale,0) -- cycle;

\draw (-1*\scale,2.1*\scale) node[rho2] {\large{$\rho_{02}$}};
\draw (-1.7*\scale,1*\scale) node[rho1] {\large{$\rho_{01}$}};

\draw[dashed, gray] (-3*\scale,0)--(-3*\scale,2.9*\scale);
\draw[dashed, gray] (-4*\scale,0)--(-4*\scale,2.6*\scale);

\draw[thick] (-3*\scale,3.1*\scale) to [color = rho2, looseness = 0.7] (4*\scale,2.7*\scale);
   \draw[color=rho2,
    decoration={markings,mark=at position 1 with {\arrow[very thick]{>}}},
    postaction={decorate}
    ]
    (3.9*\scale,2.8*\scale) -- (4*\scale,2.7*\scale);
 \draw (0.5*\scale, 3.5*\scale) node[rho2] {{${T_2}_\# \rho_{02} = \rho_2$}};

\draw[thick] (-3*\scale,-0.5*\scale) to [color = rho1, bend right, looseness = 0.7] (6*\scale,-1*\scale);
   \draw[color=rho1,
    decoration={markings,mark=at position 1 with {\arrow[very thick]{>}}},
    postaction={decorate}
    ]
    (5.9*\scale,-1.05*\scale) -- (6*\scale,-1*\scale);
 \draw (1.5*\scale, -1.3*\scale) node[rho1] {{${T_1}_\# \rho_{01} = \rho_1$}};

\draw [decorate,decoration={brace, amplitude=6pt},xshift=0pt,yshift=-2pt]
(-3*\scale,0) -- (-4*\scale,0)node [black,midway,yshift=-12pt] {
$B$};
\draw[black] (2*\scale,0)--(9*\scale,0);
\draw [rho2, thick] (2.3*\scale,0) [out = 80, in = 210] to (3.5*\scale, 2.2*\scale) [out = 30, in = 170] to (4.8*\scale,2.5*\scale) [out = -10, in = 130] to (8.5*\scale,0);
\fill[pattern=north west lines wide, pattern color= rho2] (3.5*\scale,0) -- (3.5*\scale, 2.2*\scale) [out = 30, in = 170] to (4.8*\scale,2.5*\scale) --  (4.8*\scale,0) -- cycle;
\draw[dashed, gray] (3.5*\scale,0)--(3.5*\scale,2.2*\scale);
\draw[dashed, gray] (4.8*\scale,0)--(4.8*\scale,2.5*\scale);
\draw [decorate,decoration={brace, amplitude=6pt},xshift=0pt,yshift=-2pt]
(4.8*\scale,0) -- (3.5*\scale,0)node [black,midway,yshift=-13pt] {
$T_2(B)$};

\draw [rho1, thick] (3.7*\scale,0) [out = 60, in = 210] to (6*\scale, 2.4*\scale) [out = 30, in = 135] to (7*\scale,2.3*\scale) [out = -45, in = 100] to (8*\scale,0);
\fill[pattern=north east lines wide, pattern color= rho1] (6*\scale,0) -- (6*\scale, 2.4*\scale) [out = 30, in = 135] to (7*\scale,2.3*\scale) --  (7*\scale,0) -- cycle;
\draw[dashed, gray]  (6*\scale,0) -- (6*\scale, 2.4*\scale);
\draw[dashed, gray] (7*\scale,2.3*\scale) --  (7*\scale,0);
\draw [decorate,decoration={brace, amplitude=6pt},xshift=0pt,yshift=-2pt]
(7*\scale,0) -- (6*\scale,0)node [black,midway,yshift=-13pt] {
$T_1(B)$};
\draw (8*\scale,1.5*\scale) node[rho1] {\large{$\rho_{1}$}};
\draw (2.9*\scale,2.1*\scale) node[rho2] {\large{$\rho_{2}$}};
\end{tikzpicture}

\caption{Illustration of the set $B, T_1(B)$ and $T_2(B)$. Recall that $T_i$ is the optimal map between $\rho_{0i}$ and $\rho_i$ for $i=1,2$. Moreover, the set $B$ is chosen such that $T_1(B) \subset \{ \frac{1}{\delta} \geq \rho_1 > \rho_2+\delta\}$, while $T_2(B) \subset \{ \rho_1 \leq \rho_2 \leq \frac{1}{\delta}\}$.\label{fig:definition_B}}
\end{figure}
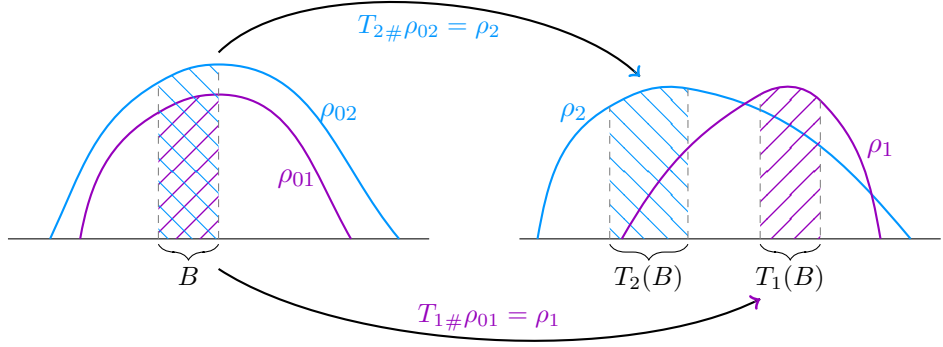
Let $\rho_\epsilon := \epsilon\rho_{01}\chi_{B}$, where $0<\epsilon\ll \delta$ is a sufficiently small number to be determined later. Let $\varphi_1 := T_1 \# (\rho_{01} - \rho_{\epsilon})= (1-\epsilon)\rho_1 \chi_{T_1(B)}$, then by applying Lemma~\ref{lemma:part} to the optimal plan $T_1$ in comparison to $T_2$, and subtracting $\frac{1}{m}\varphi_1^m$ on both sides, we arrive at the following inequality:
\begin{equation}
\begin{split}
&\int_{\mathbb{R}^d} \left( \frac{1}{m} (\varphi_1 + T_1 \# \rho_\epsilon)^m - \frac{1}{m} \varphi_1^m \right) dx + E[T_1]\\
\leq & \int_{\mathbb{R}^d} \left( \frac{1}{m} (\varphi_1 + T_2 \# \rho_\epsilon)^m - \frac{1}{m} \varphi_1^m \right) dx + E[T_2],
\end{split}
\label{ineq:T1_2}
\end{equation}

where
$$
E[T_i] := \int_{\mathbb{R}^d} \Big((T_i \# \rho_\epsilon)\Phi + \frac{1}{2h} |T_i(x) - x|^2 \rho_\epsilon \Big) dx, ~~i = 1,2.
$$

Next we state a simple algebraic inequality without proof. For all real numbers $a$ and $b$ satisfying $0<b< a< \frac{1}{\delta}$ and $m>2$, we have
\begin{equation}
a^{m-1}b \leq \frac{1}{m} (a+b)^m - \frac{1}{m} a^m \leq a^{m-1}b + Cb^2,
\label{ineq:algebraic}
\end{equation}
where the constant $C$ only depends on  $m$ and $\delta$. Using \eqref{ineq:algebraic},  \eqref{ineq:T1_2} yields that
\begin{equation}
\begin{split}
 \int_{\mathbb{R}^d} \varphi_1^{m-1} (T_1 \# \rho_\epsilon) dx + E[T_1] 
&\leq  \int_{\mathbb{R}^d} \varphi_1^{m-1} (T_2 \# \rho_\epsilon) dx + E[T_2] + \int_{T_2(B)} C(m,\delta) \epsilon^2 \rho_2^2 dx
\\&\leq \int_{\mathbb{R}^d} \varphi_1^{m-1} (T_2 \# \rho_\epsilon) dx + E[T_2] + C(m,\delta) \epsilon^2 \text{~~~ (since $\rho_2 \leq \frac{1}{\delta}$ in $T_2(B)$)}\\
 & \leq \int_{\mathbb{R}^d} \rho_1^{m-1} (T_2 \# \rho_\epsilon) dx + E[T_2] + C(m,\delta) \epsilon^2  \text{~~ (since $\varphi_1 \leq \rho_1$)}\\
 & \leq \int_{\mathbb{R}^d} \rho_2^{m-1} (T_2 \# \rho_\epsilon) dx + E[T_2] +C(m,\delta) \epsilon^2   \end{split}
 \label{ineq:T1_3}
 \end{equation}
where the last inequality holds since $\rho_1 \leq \rho_2$ in $T_2(B)$, and $\text{ supp} (T_2 \# \rho_\epsilon) \subset T_2(B)$.

Similarly, we define $\varphi_2 := T_2 \# (\rho_{02} - \rho_{\epsilon})$, and note that $\rho_{2}$ is the minimizer to \eqref{def:rho_i}. We then apply Lemma~\ref{lemma:part} to the optimal plan $T_2$ in comparison to $T_1$, and an argument parallel to that above yields the following inequality for $\varphi_2$:
\begin{equation}
\begin{split}
 \int_{\mathbb{R}^d} \varphi_2^{m-1} (T_2 \# \rho_\epsilon) dx + E[T_2] &\leq \int_{\mathbb{R}^d} \varphi_2^{m-1} (T_1 \# \rho_\epsilon) dx + E[T_1] + C(m,\delta) \epsilon^2\\
 &\leq \int_{\mathbb{R}^d} \rho_2^{m-1} (T_1 \# \rho_\epsilon) dx + E[T_1] + C(m,\delta) \epsilon^2 \text{~~ (since $\varphi_2 \leq \rho_2$)} 
 \end{split}
 \label{ineq:T2}
 \end{equation}
  Note that in the set $T_2(B)$, $\rho_2$ is bounded above by $\frac{1}{\delta}$, hence $\varphi_2$ is just smaller than $\rho_2$ by order $\epsilon$ in this set, namely $\rho_2 < \varphi_2 + \epsilon/\delta$ in $T_2(B)$. Combining this with the fact that the integral of $T_2 \# \rho_\epsilon$ is also of order $\epsilon$, and the assumption that $m>2$, we have the following: 
\begin{equation}
\int_{\mathbb{R}^d} \rho_2^{m-1} (T_2 \# \rho_\epsilon) dx \leq  \int_{\mathbb{R}^d} \varphi_2^{m-1} (T_2 \# \rho_\epsilon) dx+ C(m,\delta) \epsilon^2.
\label{ineq:link}
\end{equation}
\eqref{ineq:link} provides us a link between the RHS of \eqref{ineq:T1_3} and the LHS of \eqref{ineq:T2}, and so we arrive at
\begin{equation}
 \int_{\mathbb{R}^d} \varphi_1^{m-1} (T_1 \# \rho_\epsilon) dx \leq 
 \int_{\mathbb{R}^d} \rho_2^{m-1} (T_1 \# \rho_\epsilon) dx + C(m,\delta) \epsilon^2. 
 \label{ineq:combination}
 \end{equation}

Next we show that \eqref{ineq:combination} leads to a contradiction if $\epsilon$ is chosen to be small enough. First, recall that $\phi_1 = (1-\epsilon) \rho_1 \chi_{T_1(B)}$, and $\rho_1 > \rho_2+\delta$ in $T_1(B)$. Hence if we let $\epsilon$ be sufficiently small, we would have $\phi_1 > \rho_2 + \frac{\delta}{2}$ in $T_1(B)$.  Then we have
 \begin{equation}
 \begin{split}
 \int_{\mathbb{R}^d} \varphi_1^{m-1} (T_1 \# \rho_\epsilon) dx &\geq 
 \int_{\mathbb{R}^d} (\rho_2+\frac{\delta}{2})^{m-1} (T_1 \# \rho_\epsilon) dx\\
&\geq  \int_{\mathbb{R}^d} \left(\rho_2^{m-1}+(\frac{\delta}{2})^{m-1}\right)(T_1 \# \rho_\epsilon) dx\\
 &\geq \int_{\mathbb{R}^d}  \rho_2^{m-1} (T_1 \# \rho_\epsilon) dx + \epsilon (\frac{\delta}{2})^{m-1} \|\rho_1\|_{L^1(B)},
 \end{split}
 \end{equation}
 which contradicts \eqref{ineq:combination} when we fix $\delta$ and let $\epsilon$ be sufficiently small.  This concludes the proof.
 \end{proof}

\begin{remark}
By sending the time step $h\to 0$, the comparison principle for discrete solutions immediately leads to a comparison principle for gradient flow solutions. Also, although we only prove the comparison principle for the energy $\int \rho^m dx$ with $2<m\leq \infty$, the proof can indeed be easily extended for $1<m\leq \infty$, and also the case when the entropy part is given by$\int \rho \log \rho dx$. 
\end{remark}

\subsection{Confinement result and long-time behavior.}
In this subsection, we show some applications of the comparison principle for discrete JKO solutions. 
The first application is the following confinement result for discrete solutions (hence continuous gradient flow solutions as well), given that $\Phi\to+\infty$ as $|x|\to\infty$. 

\begin{corollary}\label{confinement}
Let $2<m\leq \infty$ and let $\Phi(x)$ satisfy \textup{(\textbf{A3})} and the additional assumption that $\lim_{|x|\to\infty}\Phi(x) =+ \infty$. Assume the initial data $\rho_0 \in L^\infty(\mathbb{R}^d)$ has compact support, and if $m=\infty$ we assume in addition that $\|\rho_0\|_\infty \leq 1$. Then the support for the discrete solution $\rho_{m,h}^n$ will stay bounded for all $n$, where the bound of the support does not depend on $n$ or $h$.
\end{corollary}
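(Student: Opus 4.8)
The plan is to dominate the discrete solution $\rho_{m,h}^n$, uniformly in $n$ and in the step size $h$, by a single time-independent profile $\rho_S$ that has compact support and lies above $\rho_0$, and then to propagate this domination through the scheme by the comparison principle of Theorem~\ref{thm:comparison}. Write $M:=\int_{\mathbb{R}^d}\rho_0\,dx$, so that each $\rho_{m,h}^n$ belongs to $\mathcal{P}_{2,M}(\mathbb{R}^d)$, and fix $h$ in the admissible range $0<h<h_0$ of Theorem~\ref{thm:collection}(a).

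To construct $\rho_S$, note first that semi-convexity of $\Phi$ together with $\Phi(x)\to+\infty$ as $|x|\to\infty$ forces $\Phi$ to be continuous and coercive, so every sublevel set $\{\Phi\le C\}$ is compact. For $2<m<\infty$, the direct method together with the convexity of $E_m$ along linear interpolations on each mass class yields, for each $C\in\mathbb{R}$, the unique global minimizer $\rho_C$ of $E_m$ over $\mathcal{P}_{2,A(C)}(\mathbb{R}^d)$, where $A(C):=\int_{\mathbb{R}^d}\rho_C\,dx$; solving its Euler--Lagrange equation $\rho_C^{m-1}+\Phi\equiv C$ on $\{\rho_C>0\}$ gives $\rho_C=(C-\Phi)_+^{1/(m-1)}$, supported in the compact set $\{\Phi\le C\}$. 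In the case $m=\infty$ one instead takes $\rho_C:=\chi_{\{\Phi\le C\}}$, which, after choosing $C$ outside the at most countable set of levels where $|\{\Phi=C\}|>0$, is by the bathtub principle the unique global minimizer of $E_\infty$ over $\mathcal{P}_{2,|\{\Phi\le C\}|}(\mathbb{R}^d)$. Finally, choose $C$ large enough that $\supp\rho_0\subset\{\Phi\le C\}$ and $\rho_C\ge\|\rho_0\|_\infty$ on $\supp\rho_0$ --- possible since $\Phi$ is bounded on the compact set $\supp\rho_0$, and in the case $m=\infty$ using $\|\rho_0\|_\infty\le1$ --- and set $\rho_S:=\rho_C$, $M_S:=\int_{\mathbb{R}^d}\rho_S\,dx$. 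Then $\rho_0\le\rho_S$ a.e.\ and $M\le M_S$.

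The crucial point is that $\rho_S$ is a fixed point of the JKO scheme for \emph{every} time step: since $\rho_S$ is the global minimizer of $E_m$ over $\mathcal{P}_{2,M_S}(\mathbb{R}^d)$ and $W_2^2(\cdot,\rho_S)\ge0$ vanishes only at $\rho_S$, for any $h>0$ the map $\rho\mapsto E_m[\rho]+\tfrac1{2h}W_2^2(\rho,\rho_S)$ has its unique minimum over $\mathcal{P}_{2,M_S}(\mathbb{R}^d)$ at $\rho_S$. We now show $\rho_{m,h}^n\le\rho_S$ a.e.\ by induction on $n$. The case $n=0$ holds by construction. Assuming $\rho_{m,h}^n\le\rho_S$ a.e., apply Theorem~\ref{thm:comparison} with $\rho_{01}=\rho_{m,h}^n\in\mathcal{P}_{2,M}(\mathbb{R}^d)$ and $\rho_{02}=\rho_S\in\mathcal{P}_{2,M_S}(\mathbb{R}^d)$, at the same step size $h$: its hypotheses $M\le M_S$, $\rho_{01}\le\rho_{02}$ a.e., and (when $m=\infty$) $\|\rho_{0i}\|_\infty\le1$ all hold, the last because the $E_\infty$-scheme keeps densities $\le1$. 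Since the one-step minimizer associated with $\rho_{01}$ is $\rho_{m,h}^{n+1}$ and the one associated with $\rho_{02}=\rho_S$ is $\rho_S$ itself, we obtain $\rho_{m,h}^{n+1}\le\rho_S$ a.e. Hence $\supp\rho_{m,h}^n\subset\{\Phi\le C\}$ for all $n$ and all admissible $h$, a bounded set depending only on $m$, $d$, $\Phi$ and $\rho_0$. Letting $h\to0$ via Theorem~\ref{thm:collection}(b) gives the same confinement for the continuous gradient flow $\rho_m$.

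The one genuinely nontrivial step is verifying that $\rho_C$ (respectively $\chi_{\{\Phi\le C\}}$) is the global minimizer of $E_m$ on its mass class --- equivalently a fixed point of the scheme --- which relies on the convexity of $E_m$ and its Euler--Lagrange equation (respectively the bathtub principle, where one must avoid level sets of $\Phi$ of positive measure in the degenerate case $m=\infty$). Everything else is a direct consequence of Theorem~\ref{thm:comparison}.
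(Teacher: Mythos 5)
Your proposal is correct and follows essentially the same route as the paper: dominate $\rho_0$ by a compactly supported global minimizer of $E_m$ on a suitable mass class (which is automatically a fixed point of the JKO scheme for every $h$), and propagate the ordering inductively via Theorem~\ref{thm:comparison}. The only differences are cosmetic — you derive the minimizer's profile from the Euler--Lagrange equation and the bathtub principle where the paper cites \cite{cjmtu}, and your normalization $(C-\Phi)_+^{1/(m-1)}$ is in fact the one consistent with the paper's definition of $E_m$.
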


\begin{proof}
\emph{$\circ$ Case 1: $2<m<\infty$}. For any $A>0$, let us look for the global minimizer $\rho_A$ of the energy $E_m$ as defined by \eqref{approx} among $\mathcal{P}_{2,A}(\mathbb{R}^d)$. Due to \cite[Lemma 6]{cjmtu}, the global minimizer $\rho_A$ is given by 
$$\rho_A= \Big(\frac{m-1}{m}(C_A-\Phi(x))_+\Big)^{\tfrac{1}{m-1}},$$
where $C_A$ is chosen such that the total mass of $\rho_A$ is equal to $A$. Observe that for any $A>0$, such $\rho_A$ is also a stationary solution for the discrete JKO scheme, and it has a compact support. 

Therefore for any $\rho_0\in L^\infty(\mathbb{R}^d)$ with compact support, one can choose $A$ to be sufficiently large such that $\rho_0 \leq \rho_A$ a.e. Then one can apply Theorem \ref{thm:comparison} and obtain that $\rho_{m,h}^n \leq \rho_A$ a.e., hence the support of $\rho_{m,h}^n$ stays within the support of $\rho_A$ for all time steps.

\emph{$\circ$ Case 2: $m=\infty$}. 
In this case, we first point out that for any mass size $A>0$, the global minimizer of $E_\infty$ among $\mathcal{P}_{2,A}$ is given by some characteristic function $\chi_{S_A}$, where $S_A$ is the level set of the function $\Phi$, i.e.
$$S_A = \{x\in\mathbb{R}^d: \Phi(x) \geq C_A\},$$
and $C_A$ is chosen so that $\chi_{S_A}$ has mass $A$. Moreover, since $\chi_{S_A}$ is the global minimizer of $E_\infty$, it must be a stationary solution as well. \\[0.3cm]
Recall that $\rho_0$ has compact support, $\|\rho_0\|_\infty \leq 1$, and $\Phi(x) \to \infty$ as $|x|\to\infty$. Therefore if we let $A$ be sufficiently large, we will have $\text{supp}~\rho_0 \subset S_A$, which implies that $\rho_0 \leq \chi_{S_A}$. Since $\chi_{S_A}$ is a stationary solution, the comparison result in Theorem \ref{thm:comparison} immediately implies that $\text{supp}~\rho_{\infty, h}^n \subset S_A$ for all $n$ and $h$, and we are done.
\end{proof}

Lastly we briefly discuss the long time behavior of the gradient flow solution $\rho_m$ for $2<m\leq \infty$, when $\Phi$  is strictly convex and bounded below in $\mathbb{R}^d$.  In this case, one can easily obtain that the global minimizer for $E_\infty$ in $\mathcal{P}_2(\mathbb{R}^d)$ is $\rho_S := \chi_\mathcal{O}$, where $\mathcal{O} = \{x\in\mathbb{R}^d: \Phi(x) \leq C\},$ and $C$ is chosen such that $\chi_\mathcal{O}$ has mass 1. 
\begin{theorem}\label{long_time_convergence} 
Let $2<m\leq \infty$.  Let $\Phi$ be strictly convex and satisfy \textup{(\textbf{A2})} and \textup{(\textbf{A3')}}.  Assume the initial data $\rho_0 \in \mathcal{P}_2(\mathbb{R}^d)$ has compact support, and in addition satisfies $\|\rho_0\|_\infty \leq 1$ in the case $m=\infty$. For $2<m\leq \infty$, let $\rho_{m}$ be given as the gradient flow for $E_m$ with initial data $\rho_0$, as defined in Theorem \ref{thm:collection}(b). Then as $t\to\infty$, $\rho_{m}(\cdot,t)$ converges to the unique global minimizer $\rho_S$ of $E_m$ exponentially fast in 2-Wasserstein distance.
\end{theorem}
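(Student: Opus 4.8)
The plan is to exploit the confinement result (Corollary~\ref{confinement}): although $\Phi$ is only strictly convex and may fail to be uniformly convex on all of $\mathbb{R}^d$, the gradient flow stays in a fixed compact set on which $\Phi$ \emph{is} uniformly convex, and there the standard contraction estimate for $\lambda$-convex gradient flows produces exponential decay with a strictly positive rate.

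First I would record the relevant stationary profile. For $2<m<\infty$ the unique global minimizer of $E_m$ over $\mathcal{P}_2(\mathbb{R}^d)$ is $\rho_S=\bigl(\tfrac{m-1}{m}(C-\Phi)_+\bigr)^{1/(m-1)}$, and for $m=\infty$ it is $\rho_S=\chi_{\{\Phi\le C\}}$, where $C$ is determined by $\int\rho_S\,dx=1$; this is the explicit minimizer appearing in the proof of Corollary~\ref{confinement}, and strict convexity of $\Phi$ forces its uniqueness. Being a global minimizer, $\rho_S$ is in particular a stationary point of the gradient flow. Next, since $\rho_0\in L^\infty$ has compact support (with $\|\rho_0\|_\infty\le1$ when $m=\infty$) and $\Phi\to+\infty$, Corollary~\ref{confinement} yields a sublevel set $K:=\{\Phi\le C_*\}$ — convex since $\Phi$ is convex, and compact since $\Phi$ is coercive — such that $\operatorname{supp}\rho_{m,h}^n\subseteq K$ for all $n$ and $h$, hence $\operatorname{supp}\rho_m(\cdot,t)\subseteq K$ for all $t\ge0$; enlarging $C_*$ if needed we may also assume $\operatorname{supp}\rho_S\subseteq K$.

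The key point is the improved convexity on $K$. Because $\Phi\in C^2$ is strictly convex, $x\mapsto D^2\Phi(x)$ is continuous and positive definite, so by compactness $D^2\Phi\ge\lambda_K\,\mathrm{Id}$ on $K$ for some $\lambda_K>0$. Since $K$ is convex, a generalized geodesic joining two measures supported in $K$ stays supported in $K$; along such a geodesic $\rho\mapsto\int\rho\,\Phi\,dx$ is $\lambda_K$-convex (the argument behind Proposition~9.3.2 of \cite{ags} uses only the bound $D^2\Phi\ge\lambda_K$ on the supports involved), while $\mathcal{S}_m$ is convex along generalized geodesics — for finite $m$ this follows from the convexity of the $L^m$ norm (Proposition~9.3.9 of \cite{ags}), since $s\mapsto s^m$ is convex increasing, and for $m=\infty$ one sends $p\to\infty$ to see that $\|\cdot\|_\infty\le1$ is preserved along the geodesic. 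Hence $E_m$ is $\lambda_K$-convex along every generalized geodesic connecting measures supported in $K$. Now every iterate $\rho_{m,h}^n$ of the JKO scheme started at $\rho_0$ lies in $K$, as does $\rho_S$, so the discrete variational inequality underlying the contraction estimate of Theorem~\ref{thm:collection}(c) (Theorem~4.0.4 of \cite{ags}) — whose only use of $\lambda$-convexity is along generalized geodesics joining $\rho_{m,h}^n$, $\rho_{m,h}^{n+1}$ and the comparison measure — may be run with the positive constant $\lambda_K$ in place of the (possibly nonpositive) global constant of \textup{(\textbf{A3})}. Testing it with $\sigma=\rho_S$ and using $E_m[\rho_{m,h}^{n+1}]\ge E_m[\rho_S]$ gives $(1+\lambda_K h)\,W_2^2(\rho_{m,h}^{n+1},\rho_S)\le W_2^2(\rho_{m,h}^{n},\rho_S)$; iterating, letting $h\to0$ at fixed $t$ and using $W_2(\rho_{m,h}(\cdot,t),\rho_m(\cdot,t))\le C\sqrt h$ from Theorem~\ref{thm:collection}(b), we arrive at
\[
W_2(\rho_m(\cdot,t),\rho_S)\le e^{-\lambda_K t/2}\,W_2(\rho_0,\rho_S)\qquad(t\ge0),
\]
which is the claimed exponential convergence; the $m=\infty$ case is handled identically, using the $m=\infty$ parts of Corollary~\ref{confinement} and Theorem~\ref{thm:collection}.

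The step I expect to be the main obstacle is the localization used in the last paragraph: one must verify that, along the flow, the AGS contraction machinery really does only involve generalized geodesics supported in the compact set $K$, so that the global semiconvexity constant of \textup{(\textbf{A3})} may legitimately be replaced by the positive number $\lambda_K$. This is precisely where the confinement result is indispensable — it forces every iterate $\rho_{m,h}^n$, and hence every interpolating geodesic between consecutive iterates and $\rho_S$, to remain inside $K$, where $\Phi$ is genuinely uniformly convex. A secondary point to check is that $\rho_0$ has finite energy $E_m[\rho_0]<\infty$ so that Theorem~\ref{thm:collection} applies, which is immediate from $\rho_0\in L^\infty$ with compact support.
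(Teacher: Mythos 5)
Your proposal is correct and follows essentially the same route as the paper: confine the flow (and $\rho_S$) to a compact sublevel set of $\Phi$ via Corollary~\ref{confinement}, note that $\Phi$ is uniformly convex there, and then run the AGS contraction estimate of Theorem~\ref{thm:collection}(c) with the localized convexity constant. Your version is in fact more careful than the paper's at the one delicate point — checking that the generalized geodesics entering the contraction argument stay inside the (convex) sublevel set so that the global constant of \textup{(\textbf{A3})} may be replaced by $\lambda_K>0$ — which the paper leaves implicit; note only that both arguments tacitly assume strict convexity of a $C^2$ potential yields $D^2\Phi>0$ pointwise on the confinement set, which is a (shared, minor) overstatement.
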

\begin{proof}
If $\Phi$ is uniformly convex in $\mathbb{R}^d$, then there exists some $\lambda>0$, such that $D^2 \Phi(x) \geq \lambda I$ for all $x\in \mathbb{R}^d$. In this case we can directly apply the contraction result in  Theorem \ref{ags}(c) between $\rho_\infty(x, t)$ and $\rho_S(x)$ (where $\rho_S$ is the global minimizer for the free energy $E_m[\rho]$ in $\mathcal{P}_2(\mathbb{R}^d)$), which gives
$$ W_2(\rho_\infty(\cdot, t), \rho_S(\cdot)) \leq W_2(\rho_0, \rho_S) e^{-\lambda t},$$
and hence the 2-Wasserstein distance between $\rho_\infty(x,t)$ and $\rho_S(x)$ decays exponentially fast in $t$.\\
On the other hand, if $\Phi$ is strictly convex in $\mathbb{R}^d$ but not uniformly convex, we will make use of the confinement result in Corollary \ref{confinement}. As long as $\rho_0$ is compactly supported,  the proof of Corollary \ref{confinement} shows the support of $\rho_\infty(\cdot, t)$
will stay in some compact set $\mathcal{O}_A$ for all time, and indeed one can find an $\mathcal{O}_A$ such that it is independent of $m$ for all $2<m\leq \infty$. This confinement result allows us to apply the contraction result in Theorem \ref{ags} (c), which gives that
$$ W_2(\rho_\infty(\cdot, t), \rho_S(\cdot)) \leq W_2(\rho_0, \rho_S) e^{-\tilde \lambda t},$$
where $\tilde \lambda = \inf\{\lambda: D^2 \Phi(x) \geq \lambda I \text{ for all }x\in \mathcal{O}_A\}$ is a strictly positive constant depending on $\rho_0$ and $\Phi$. 
\end{proof}

Finally we remark that for finite $m$ and $\rho_m$, the corresponding result is shown in \cite{cjmtu}, where they use entropy dissipation methods. We suspect the convergence rate to be exponential in stronger norms instead of Wasserstein distance, but this issue is not pursued here.

\appendix

\section{ Constructing a (PME-D)$_m$ subsolution}
\begin{lemma}\label{appendix} Fix $\epsilon > 0$, $m > 0$, a number $\gamma$, a point $x'$, and vector $\vec n$.  Then if $\gamma > \nabla \Phi(x') \cdot \vec n$, there exists a positive constant $\eta$ depending on $\epsilon$ so that we can construct a classical subsolution $S$ of $(PME-D)_m$  in  $E_\eta := B_{\eta}(x') \times [-\eta, \eta]$ with $(x',0)$ on its free boundary with outward normal $\vec n$ , which moves with normal velocity $\gamma$. Further, $S$ will be an ``almost" supersolution near $(x',0)$ in the following sense:
\[\gamma \ge  |\nabla S| + \nabla S \cdot \nabla \Phi -\epsilon \hbox{ at } (x',0). \]

\end{lemma}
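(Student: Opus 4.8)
The plan is to build the subsolution $S$ explicitly in self-similar/traveling-wave form near the free boundary and then verify the two required inequalities by a direct computation, using the continuity of $\nabla\Phi$ to absorb error terms into $\epsilon$. Since we are working in a neighborhood of the single point $(x',0)$, I would first set up coordinates so that $\vec n = e_d$ and $x' = 0$ (a translation and rotation), and write $\nu\cdot\nabla\Phi(0) =: \beta < \gamma$ by hypothesis. The natural ansatz is to take $S$ to depend, to leading order, only on the signed distance $s := x\cdot\vec n - \gamma t$ to the moving hyperplane, with a linear profile $S \approx a\,(s)_-$ — equivalently $S(x,t) = \big(a(\gamma t - x\cdot\vec n) + (\text{lower order})\big)_+$ — where the slope $a>0$ is chosen below. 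For the pure traveling front $S = a(\gamma t - x\cdot \vec n)_+$ one checks in $\{S>0\}$ that $S_t = a\gamma$, $|\nabla S|^2 = a^2$, $\Delta S = 0$, and $\nabla S\cdot\nabla\Phi = -a\,\partial_\nu\Phi$; so the subsolution inequality for (PME-D)$_m$,
$$
\phi_t - (m-1)\phi(\Delta\phi+\Delta\Phi) - |\nabla\phi|^2 - \nabla\phi\cdot\nabla\Phi \le 0,
$$
becomes (near the free boundary, where $\phi\to 0$ kills the $(m-1)\phi(\cdots)$ term) essentially $a\gamma - a^2 + a\,\partial_\nu\Phi \lesssim 0$, i.e. $\gamma \le a - \partial_\nu\Phi$. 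Choosing the slope $a$ slightly larger than $\gamma + \beta$ — say $a = \gamma + \beta + \epsilon$ — makes this hold with room to spare at $(0,0)$, and hence (by continuity of $\Delta\Phi$, $\nabla\Phi$ and smallness of $\phi$) in a full neighborhood $E_\eta$ for $\eta$ small depending on $\epsilon$. With this same choice, at $(0,0)$ we have $|\nabla S| + \nabla S\cdot\nabla\Phi = a - a\cdot 0 \cdot(\text{along }\nu)$... more precisely $|\nabla S|(0,0) = a$ and $\nabla S\cdot\nabla\Phi(0,0) = -a\beta$; taking $a$ close enough to $\gamma+\beta$ (within $\epsilon/C$ for a controlled constant $C = C(\|\nabla\Phi\|_{L^\infty(B_1(x'))})$) gives $\gamma \ge |\nabla S| + \nabla S\cdot\nabla\Phi - \epsilon$ at $(x',0)$, which is the ``almost supersolution'' condition.

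There is one genuine wrinkle: a flat traveling front has $\Delta S = 0$, but a \emph{classical} subsolution in the sense used here must have a $C^{2,1}$ free boundary and strictly positive gradient there, and we also want the interior support at $t=0$ to sit inside a prescribed ball $B_0$ (this is how Lemma~\ref{PMEDfact} uses it). To arrange both, I would bend the front slightly: replace the hyperplane $\{x\cdot\vec n = \gamma t\}$ by a sphere of large radius $\kappa^{-1}$ tangent to it at $(0,0)$, i.e. use $S(x,t) = a\big(\text{(signed distance to the moving sphere)}\big)_+$ with curvature $\kappa>0$ chosen small. This introduces a curvature term of size $O(\kappa)$ into $\Delta S$ and into the normal velocity, but since $\kappa$ can be taken as small as we like (depending on $\epsilon$) after fixing $a$, all such terms are lower order and get absorbed exactly as the $\nabla\Phi$-oscillation terms are; meanwhile the finite radius guarantees the free boundary is a smooth hypersurface with nonvanishing gradient, moving with normal velocity $\gamma + O(\kappa)$, which we can re-normalize to be exactly $\gamma$ at $(x',0)$ by adjusting $a$ within the allowed $\epsilon$-tolerance. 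The curvature also makes $\{S(\cdot,0)>0\}$ a ball, which we place inside $B_0$ by shrinking $\eta$.

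The main obstacle, then, is bookkeeping rather than any deep difficulty: one must carefully track three small parameters — the curvature $\kappa$ of the bent front, the oscillation of $\nabla\Phi$ and $\Delta\Phi$ over $E_\eta$, and the slack in the slope $a$ relative to $\gamma+\beta$ — and order the choices so that first $a$ is fixed (giving strict inequality in both the subsolution condition and the ``almost'' condition at the center point with a definite gap), then $\kappa$ is taken small enough that the curvature corrections are at most half that gap, and finally $\eta$ is taken small enough that the spatial variation of $\Phi$ over $E_\eta$ eats the remaining slack and keeps $\{S(\cdot,0)>0\}\subset B_0$. I expect the verification that the term $(m-1)S(\Delta S + \Delta\Phi)$ is harmless to require a word of care, since $\Delta\Phi$ need not be small; but because this term carries the factor $S$, which is $O(\eta)$ on $E_\eta$ and vanishes at the free boundary, and because it enters the subsolution inequality with a favorable sign whenever $\Delta S + \Delta\Phi \ge 0$ (and $\Delta S$ can be made $\ge -\|\Delta\Phi\|_\infty$ by the curvature choice), it too is controlled for $\eta$ small. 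This mirrors the barrier constructions in \cite{k} and \cite{kl}, the only change being the presence of the drift term $\nabla S\cdot\nabla\Phi$, which is exactly what forces the slope choice $a \approx \gamma + \nu\cdot\nabla\Phi(x')$ rather than $a\approx\gamma$.
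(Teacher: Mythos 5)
Your construction is sound in outline but takes a genuinely different route from the paper's. The paper starts from a Barenblatt profile $B$ of the pure porous medium equation with prescribed initial support and initial boundary speed, manufactures the subsolution deficit by taking the sup-convolution of $B$ over a ball of shrinking radius $r(t)=\mu-\nu t$ (which subtracts $\nu|\nabla B|$ from the time derivative), and then handles the drift by passing to the moving frame $S(x,t)=\tilde S(x+\nabla\Phi(x')t,t)$; its three small parameters are $\nu$, $\mu$, $\eta$ where yours are the slack in the slope, the curvature $\kappa$, and $\eta$. Your bent linear front achieves the same end more directly: since only one-sided inequalities are needed, there is no reason for the barrier to be close to an exact PME solution, and absorbing the drift into the choice of slope is equivalent to the paper's Galilean shift. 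Both arguments treat $(m-1)S(\Delta S+\Delta\Phi)$ identically, as an $O(\eta)$ term killed by the factor $S$, and your ordering of parameter choices (first the slope, then $\kappa$, then $\eta$) is the correct one.

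One point must be nailed down before your computation closes: the normal convention, because the positivity of the slope $a$ --- the non-degeneracy of the barrier, which is the entire content of the hypothesis $\gamma>\nabla\Phi(x')\cdot\vec n$ --- hinges on it. With your conventions ($\vec n$ outward, $\nabla S=-a\vec n$, $\beta=\vec n\cdot\nabla\Phi(x')$) the interior inequality forces $a\geq\gamma+\beta$, so $a>0$ requires $\gamma>-\beta$, which is \emph{not} the stated hypothesis $\gamma>\beta$. The lemma is in fact invoked in Lemma~\ref{PMEDfact} with $\vec n$ equal to the \emph{inward} normal $\nu$ of $\{\phi>0\}$ (see \eqref{gammaupperbound} and the definition of $\gamma_1$ there), for which the boundary law $V=|\nabla S|+\nu\cdot\nabla\Phi$ gives $a=\gamma-\nu\cdot\nabla\Phi(x')>0$ exactly under the stated hypothesis; the word ``outward'' in the statement is a slip that you have inherited. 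Relatedly, your check of the almost-supersolution condition uses $\nabla S\cdot\nabla\Phi=-a\beta$ literally, which with $a\approx\gamma+\beta$ gives $|\nabla S|+\nabla S\cdot\nabla\Phi\approx(\gamma+\beta)(1-\beta)$ and does not reduce to $\gamma$ up to $\epsilon$; the inequality actually used downstream is \eqref{almost}, with the unit normal ($\nu\cdot\nabla\Phi$) in place of $\nabla S\cdot\nabla\Phi$, and with that reading the choice $a\in(\gamma-\nu\cdot\nabla\Phi,\,\gamma-\nu\cdot\nabla\Phi+\epsilon]$ closes both inequalities at once. So: commit to the inward normal throughout, and your argument goes through.
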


\begin{proof}
Recall that  the Barenblatt profiles are given by
\[B(x, t; \tau, C) = \frac{(C(t+\tau)^{2\lambda} - K|x-x_0|^2)_+}{(t+\tau)}\]
where $C > 0$ and $\lambda = ((m-1)n+2)^{-1}$, $K = \lambda / 2$, and they are solutions of (PME)$_m$.  $C$ and $\tau$ are parameters that control the free boundary speed and initial support.

Now we change variables so that $\vec n$ is colinear with $x'$, and take $x_0 = 0$ .  Then it suffices to take $x_0 = 0$. We start with $B(x,t)$: a Barenblatt solution with initial support $B_{R}(0)$, and initial free boundary advancement speed $\xi$ where $R$ and $\xi$ will be determined later.  We fix $r(t) = \mu - \nu t$, with $\mu,\nu$ as yet unspecified.  Then we define 
\[\tilde{S}(x,t) = \sup_{y \in B_{r(t)}(x)} B(y,t) = B((1-r(t)/|x|)x,t) \hbox { in } E_{\eta},
 \]
where $\eta$ is for now much smaller than $R/2$.     

Note that 
\[\tilde{S}_t = B_t - r'(t) \nabla B \cdot \frac{x}{|x|} = B_t + r'(t) |\nabla B| = B_t - \nu |\nabla B|\]
Moreover in $E_\eta$, since $\eta < R/2$, then $E_\eta$ is bounded away from the origin and we get $1/|x| \le 2/R$.  Thus we find that
\begin{eqnarray*}
  \frac{\partial \tilde{S}}{\partial x_j} & = & 
   \frac{\partial B}{\partial x_j} +\mu  |\nabla B|O( 1/R).
\end{eqnarray*}
Thus $\nabla \tilde{S} = \nabla B + O(\mu)$ and since $|\nabla B|$ does not vary fast in $E$, we can repeat and find $\Delta \tilde{S} = \Delta B + O(\mu)$.  Using that $B$ is a $(PME)_m$ solution then gives 
\[\tilde{S}_t = B_t - \nu |\nabla B| = (m-1)\tilde{S}\Delta \tilde{S}  + |\nabla \tilde{S}|^2 -\nu |\nabla \tilde{S}| + O(\mu)\]
Next let us define
\[S(x,t) = \tilde{S}(x + \vec{b}t,t), \hbox{ where }\vec{b} = \nabla \Phi(x',0). \]
Then $S_t = \tilde{S}_t + \nabla \tilde{S} \cdot \vec{b}$, and one can conclude that 
\begin{eqnarray*}
  S_t & = & (m-1)S(\Delta S + \Delta \Phi) + |\nabla S|^2  + \nabla S \cdot \nabla \Phi - (m-1)S \Delta \Phi -\nu |\nabla S|+O(\mu)  
\end{eqnarray*}
Now in $E_\eta$, $S(x,t) \le 2\eta \sup_{E_\eta}|\nabla S| = O(\eta)$, so $(m-1)S \Delta \Phi = O(\eta)$.  Therefore 
\[S_t  = (m-1)S(\Delta S + \Delta \Phi) + |\nabla S|^2  +  r'(t) |\nabla S| + \nabla S \cdot \nabla \Phi +O(\mu) + O(\eta) \]

At this point we have to start picking our parameters carefully.  First, we can assume that $\epsilon$ is small enough so that $\epsilon < \inf_{E_\eta\cap \{S>0\}} |\nabla S| / 6$ for some small value of $\eta$. 
 Then we take 
\[\nu = \epsilon / 3,\;\; \xi = \gamma - \vec b \cdot x' + \nu > 0\]
Now we take $\eta, \mu$ small enough so that in $E_\eta$, 
\[|O(\mu) + O(\eta)|< \epsilon \inf_{E_\eta\cap \{S>0\}}|\nabla S|/3\]
and we set $R = |x'| - \mu = 1 - \mu$.  Now we refine $\eta$ so that
\[\sup_{E_\eta\cap \{S>0\}} |\nabla S| -  \inf_{E_\eta\cap \{S>0\}} |\nabla S| < \epsilon \inf_{E_\eta\cap \{S>0\}} |\nabla S|
\]
Then our choice of $\nu$ gives us the estimates
\[\nu |\nabla S| \ge \frac{\epsilon \inf_{E_\eta\cap \{S>0\}} |\nabla S|}{3 }\]
while also
\[\nu |\nabla S| \le \frac{\epsilon \sup_{E_\eta\cap \{S>0\}} |\nabla S|}{3} < \frac{\epsilon(\inf_{E_\eta\cap \{S>0\}} |\nabla S| + \epsilon)}{3} = \epsilon \inf_{E_\eta\cap \{S>0\}} |\nabla S| /3  + \epsilon^2/3 \le \epsilon \inf_{E_\eta\cap \{S>0\}} |\nabla S| / 2\]
where we used our assumption on $\epsilon$ small.
Thus we find that
\[ -\epsilon \inf_{E_\eta\cap \{S>0\}} |\nabla S| \le r'(t) |\nabla S| + O(\eta) + O(r(t)) \le 0\]
and so finally
\[(m-1)S(\Delta S + \Delta \Phi) + |\nabla S|^2 + \nabla S \cdot \nabla \Phi - \epsilon  \inf_{E_\eta\cap \{S>0\}} |\nabla S|\le S_t  \le (m-1)S(\Delta S + \Delta \Phi) + |\nabla S|^2 + \nabla S \cdot \nabla \Phi  \]
Then we are done, since it is clear that $(x',0)$ is on the free boundary of $S$ and the free boundary has initial velocity $\gamma$.

\end{proof}

\section{The support of (PME-D)$_m$ solutions have bounded jumps}\label{AppendixB}

\begin{theorem}\label{pmeBddSpeed}
Suppose $u_m$ is a solution to (PME-D)$_m$ in $\R^d$. Then for $K > 0$, there exist constants $r_{\max}, T>0$ only depending on $K, d$, and $\Phi$ near $x'$  such that the following holds for any $r_0<r_{\max}$:
Suppose $u_m(\cdot,t') = 0$ in $B_{r_0}(x')$ and $u_m \le K$ on the parabolic boundary of $B_{2r_0}(x') \times [t', t'+T]$.  Then we have that $u_m = 0$ in $B_{r_0/4}(x') \times[t', t' + T]$.
\end{theorem}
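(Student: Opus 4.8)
The plan is to prove the statement by comparison with an explicit radial barrier supersolution. After a translation I may assume $x'=0$ and $t'=0$. It then suffices to construct a supersolution $W$ of (PME-D)$_m$ on the parabolic cylinder $\overline{B_{2r_0}(0)}\times[0,T]$ with the following two properties: (a) $W\ge u_m$ on the parabolic boundary of the cylinder, i.e. $W\ge K$ on $\partial B_{2r_0}(0)\times[0,T]$ and $W(\cdot,0)\ge u_m(\cdot,0)$ on $\overline{B_{2r_0}(0)}$ (note $u_m(\cdot,0)=0$ on $\overline{B_{r_0}(0)}$ and $u_m(\cdot,0)\le K$ on $\overline{B_{2r_0}(0)}$, both being part of the hypothesis); (b) $W(\cdot,t)\equiv 0$ on $B_{r_0/4}(0)$ for every $t\in[0,T]$. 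Indeed, the comparison principle for (PME-D)$_m$ on a cylinder (see \cite{kl}) then forces $u_m\le W$ throughout, whence $u_m\equiv 0$ on $B_{r_0/4}(0)\times[0,T]$, which is the claim.

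For the barrier I would take
\[
W(x,t):=\min\Bigl\{\,C(t)\,\bigl(|x|-\rho(t)\bigr)_{+}^{2}\,,\ 2K\,\Bigr\},
\]
with $\rho$ affine and decreasing, $\rho(0)=r_0/2$, and $C$ increasing with $C(0)=4K/r_0^{2}$. Four constraints fix the parameters. (i) Since $C(0)(r_0/2)^{2}=K$, on $\{r_0\le|x|\le 2r_0\}$ one has $W(\cdot,0)\ge K\ge u_m(\cdot,0)$, while $W\ge 0=u_m(\cdot,0)$ on $\overline{B_{r_0}(0)}$; so (a) holds at $t=0$. (ii) On $\partial B_{2r_0}(0)$, $|x|-\rho(t)\ge 3r_0/2$, so $W=\min\{C(t)(3r_0/2)^2,2K\}=2K\ge K$ there for all $t$. (iii) The zero set of $W(\cdot,t)$ is $\overline{B_{\rho(t)}(0)}$, which contains $B_{r_0/4}(0)$ as long as $\rho(t)\ge r_0/4$; choosing $-\dot\rho=\|\nabla\Phi\|_{L^\infty(B_{2r_0}(0))}$ (needed for the free-boundary condition below), this holds on $[0,T]$ provided $T\le r_0/(4\|\nabla\Phi\|_{L^\infty(B_{2r_0}(0))})$. (iv) $C$ is taken to solve the Riccati ODE $\dot C=\widetilde C_1 C^{2}+\widetilde C_2 C$, where $\widetilde C_1,\widetilde C_2$ depend only on $m$, $d$ and $\|\Delta\Phi\|_{L^\infty(B_{2r_0}(0))}$; its solution stays finite on $[0,T]$ once $T<t^{*}:=\widetilde C_2^{-1}\ln\!\bigl(1+\widetilde C_2/(\widetilde C_1 C(0))\bigr)$. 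Fixing $r_{\max}$ small and then $T$ small makes (i)--(iv) simultaneously hold.

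The core of the argument — and the step I expect to be the main obstacle — is verifying that $W$ really is a supersolution of (PME-D)$_m$, including across its free boundary. In the open set $\{0<W<2K\}$, where $W=C(|x|-\rho)^{2}$, inserting $W$ into $\partial_t W\ge(m-1)W(\Delta W+\Delta\Phi)+|\nabla W|^{2}+\nabla W\cdot\nabla\Phi$ and using $|\nabla W|^{2}=4CW$, $0<\Delta W\le C\,\kappa(d)$, and $\hat x\cdot\nabla\Phi\le\|\nabla\Phi\|_{L^\infty(B_{2r_0}(0))}=-\dot\rho$ (with $\hat x=x/|x|$), one finds that $\partial_t W-\nabla W\cdot\nabla\Phi\ge\dot C(|x|-\rho)^{2}$, so the inequality reduces, after dividing by $(|x|-\rho)^{2}$, to precisely the Riccati inequality defining $C$. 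On the cap locus $\{W=2K\}$, where $W$ has only a Lipschitz corner, no smooth test function touches $W$ from below, and in any case $W$, as the minimum of the quadratic supersolution and the trivial constant supersolution $2K$, is a supersolution in the viscosity sense. At the free boundary $\{|x|=\rho(t)\}$ the quadratic vanishing gives $|\nabla W|=0$, so the (PME-D)$_m$ supersolution free-boundary condition amounts to requiring that the outward normal velocity of $\partial\{W>0\}$, namely $-\dot\rho$, dominate $\eta\cdot\nabla\Phi=\hat x\cdot\nabla\Phi$ — which is exactly how $-\dot\rho$ was chosen — and Lemma~\ref{PMEDfact} is the tool that converts this into a genuine obstruction to $u_m$ crossing $W$ there. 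The delicate part is precisely this last point — reconciling the Lipschitz quadratic barrier with the weak/viscosity formulation of (PME-D)$_m$, whose supersolution definition only directly constrains the interior, through Lemma~\ref{PMEDfact} — together with the bookkeeping of all the small parameters. Once $W$ is certified as a supersolution, property (b) and the comparison principle complete the proof.
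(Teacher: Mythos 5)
Your overall strategy (a radial supersolution barrier on the cylinder plus the comparison principle of \cite{kl}) is the same as the paper's, but your choice of profile creates a gap that is fatal for the theorem as stated. The quadratic ansatz $W=C(t)(|x|-\rho(t))_+^2$ has $\Delta W>0$, so the term $(m-1)W\Delta W$ in the supersolution inequality does not have a favorable sign and must be absorbed by $\dot C$. Carrying out your own computation, the Riccati coefficients are $\tilde C_1\sim (m-1)d$ and $\tilde C_2\sim (m-1)\|\Delta\Phi\|_\infty$, so the maximal existence time of $C$ is $t^*=O\bigl(1/(m\,C(0))\bigr)\to 0$ as $m\to\infty$. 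Hence your $T$ necessarily depends on $m$ and degenerates as $m\to\infty$, whereas the theorem asserts (and the application in Step 5 of the proof of Theorem~\ref{pmelimit} crucially uses, since there a single $T$ and $r_0$ are fixed and then the theorem is applied along a subsequence $m_k\to\infty$) that $r_{\max}$ and $T$ depend only on $K$, $d$ and $\Phi$. The paper avoids this by building the barrier from a radial profile solving $\Delta u=-\alpha$ with $\alpha\ge\sup_{B_{4r_0}}\Delta\Phi$, so that $\Delta v+\Delta\Phi\le 0$ on the positivity set and the $(m-1)v(\Delta v+\Delta\Phi)$ term is simply nonpositive for every $m$; the remaining inequality $v_t\ge|\nabla v|^2+\nabla v\cdot\nabla\Phi$ is $m$-independent. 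No time-dependent coefficient in front of a convex profile can reproduce this, so the quadratic ansatz cannot be repaired by tuning parameters.

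A second, independent problem is the one you yourself flag but do not resolve: your barrier has $|\nabla W|=0$ on $\partial\{W>0\}$. Lemma~\ref{PMEDfact} and the comparison machinery of \cite{kl} (classical free-boundary super/subsolutions, Lemma 2.6 and Theorem 2.25 there) all require the nondegeneracy condition \eqref{bdryTouchingCond}, in particular $|\nabla\phi|\ne 0$ at the contact point, so none of them applies to rule out $u_m$ crossing $W$ through the degenerate free boundary; the heuristic ``$|\nabla u_m|\le|\nabla W|=0$ at a touching point'' is precisely what has no viscosity-theoretic justification here. The paper sidesteps this by arranging $|\nabla v|\ge 1$ on the free boundary (the profile's gradient is of order $K/r_0$ near $r=r_0/2$) and letting the support advance at the explicit rate $L\sim r_0^{-1}\sup u'$, so that the free-boundary inequality $V\ge|\nabla v|+\eta\cdot\nabla\Phi$ holds with a nondegenerate gradient and the barrier is a bona fide classical free-boundary supersolution. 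To salvage your argument you would need to replace the quadratic by a profile that is superharmonic relative to $-\Delta\Phi$ and has a nonvanishing gradient at its free boundary — which is essentially the paper's construction.
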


\begin{proof}
1.  We may assume that $(x',t') = 0$. Now to prove this theorem, first we construct a supersolution of (PME-D)$_m$ in $B_{2r_0}(0)\times [0, T]$ where $T$ is yet to be determined. We start by constructing $u(r)$ on $B_{2r_0}(0)$ satisfying $u(r_0) = K$ and $u(r) = 0$ for $r \le r_0/2$.  We take $r_0$ small enough so that $\sup_{B_{4r_0}(0)} |\nabla \Phi(x) - \nabla \Phi(0)| < 1$.  Define $\alpha = \sup_{B_{4r_0}(0)} \Delta \Phi(x)$.  Then we solve that $\Delta u = -\alpha$ in the annulus $B_{2r_0}(0)\backslash B_{r_0/2}(0)$. This yields $u$ given by 
$$
u= \left\{\begin{array}{lr} \frac{C}{r^{d-2}} - \frac{\alpha r^2}{2d} + D & \mbox{d $\ne$ 2} \\ C \ln r - \alpha r^2/4 + D &\mbox{N $=$ 2} \end{array} \right.
$$
We proceed assuming $d >2$; the $d=2$ case is similar.  We choose $C$ and $D$ so that $u(r_0) = K, u(r_0/2) = 0$:
\begin{eqnarray*}
  C = -r_0^{d-2}\frac{K+3 \alpha r_0^2/8d}{2^{d-2}-1}, \quad
D = K+\alpha r_0^2/2d +\frac{K+3\alpha r_0^2/8d}{2^{d-2}-1}
\end{eqnarray*}
By taking derivatives it can be seen that $u$ has the largest derivative at $r_0/2$, and we then estimate:
\[u'(r_0/2)  \le C(d,K)/r_0 + C(d,\Phi) d r_0^2
\]
Further, we notice that $u' \ge O(1/r_0^{d-1})$ and so if we take $r_0$ small enough we have $u'(r) \ge 1$ when $r_0 / 2 \le r \le 4 r_0$.  Then this entails that $u(r) \ge u(r_0) = K$ if $r \in [r_0, 4r_0]$, and we are done finding $r_{\max}$ which is the largest value of $r_0$ that makes the desired estimates hold.

2.  Now let us define 
\[\tilde{u}(r,t) := u(R(t) r)\]
where $R(t)$ is a function to be determined with $R(0) = 1$, $1 \le R(t) \le 3/2$.  Then by construction of $u$,
\[\Delta \tilde{u} = R(t)^2 (\Delta u)(R(t)r) = -\alpha R(t)^2 \le -\alpha \le 0 \]
Further straightforward computation yields that $\tilde{u}_t \ge 2|\nabla\tilde{u}|^2$ holds if 
\[\frac{R'(t)}{ R(t)^2} > 2\frac{u'(R(t)r)}{r}\]
To this end, let us choose $R(t) = 1/(1-Lt)$, where 
\[L:=C(N,\Phi, K)/r_0^2 = \frac{8}{r_0} \sup_{r \in [r_0/2,4r_0]}  u'(r) \ge 2\sup_{r \in [r_0/2,8r_0/3]} \frac{u'(R(t)r)}{r}.\]

3.   Lastly we define
\[v(x,t) := \tilde u(x + \vec b t, t)\]
where $\vec b = \nabla \Phi(0,0)$.  We claim that $v$ is a (PME-D)$_m$ supersolution in $B_{2r_0}(0) \times [0,T(d,\Phi,K)]$ for any choice of $m$. To see this, note that
\begin{align*}
  v_t - \vec b \cdot \nabla v& \ge 2 |\nabla v|^2 \ge (m-1)v (\Delta v + \Delta \Phi) + |\nabla v|^2 + |\nabla v|^2.
\end{align*}
Now  if $r_0< r_{\max}$, we have that
\[|\nabla v|^2 - \nabla v \cdot (\nabla \Phi - \vec b) \ge |\nabla v|(|\nabla v| - |\vec b - \nabla \Phi| \ge |\nabla v|(|\nabla v| - 1)\]
But we know that $|\nabla v| = u'(r) \ge R(t) u' \ge u' \ge 1$, so the above quantity is positive.  Thus
\[ v_t \ge (m-1)v (\Delta v + \Delta \Phi) + |\nabla v|^2 + \nabla v \cdot \nabla \Phi
\]
and since $v_t \ge |\nabla v|^2 + \vec b \cdot \nabla v$ in general this holds at the boundary too.  Thus $v$ is a classical free boundary supersolution, and so by Lemma 2.6 in \cite{kl} a viscosity supersolution.

Lastly, $T$ is chosen so that both $|\vec b| T \le 2r_0/3$ and $T < 1/(3L)$, and lastly so that $T < r_0 / (12 |\vec b|)$.  The first condition ensures that the bounds on $\tilde u$ in $B_{8r_0/3}$ hold for $v$ in $B_{2r_0}$ and the second ensures that $R(t) \le 3/2$.  The last one ensures that if $|x| \le r_0/4$, $|x + \vec b t | \le |x | + r_0 / 12 \le r_0 / 3 \le r_0 / 2R(t)$ and hence 
\[v(x,t) = u(R(t)(x+\vec b t),t) = 0 \hbox{ in } |x|\leq r_0/4.\]
Now by construction $u_m \le v$ on the parabolic boundary of $B_{2r_0}(0)\times[0,T(d,\Phi,K)]$, so we can apply a comparison principle, Theorem 2.25 in \cite{kl}, to find that 
\[u_m \le v \mbox{ in } B_{2r_0}(0) \times [0,T(d,\Phi,K)]\]
Then observing the properties of $v$, we are done.
\end{proof}

\section{Some prior results on gradient flows }
\label{AppendixC}In this part of appendix, we state some results from \cite{ags}, concerning the existence and uniqueness of the discrete solution $\rho_{m,h}^n$ as defined in \eqref{eq:def_jko}, and the convergence as the time step $h\to 0$. 

The key step leading to these results is the $\lambda$-convexity of $E_m[\rho]$ for all $1<m\leq \infty$ along the generalized geodesics. Thus we first digress a little bit to state some definition and results from the optimal transport theory (see e.g. section 9.2 in \cite{ags}). Recall that $\mu\in\mathcal{P}_2(\mathbb{R}^d)$ is regular if $\mu\in L^p(\mathbb{R}^d)$ with some $p>1$. 

\begin{definition}[generalized geodesics] Let the reference measure $\mu^1 \in \mathcal{P}_2(\mathbb{R}^d)$ be regular. Let $\mu^2, \mu^3 \in \mathcal{P}_2(\mathbb{R}^d)$; then we can find two optimal transport maps $\boldsymbol{t}^2$ and $\boldsymbol{t}^3$ such that $\boldsymbol{t}^i_\# \mu^1 = \mu^i$ and $W_2^2(\mu_1, \mu^i) = \int_{\mathbb{R}^d} |\boldsymbol{t}^i(x)-x|^2 d\mu^1(x)$ for $i=2,3$. The generalized geodesics joining $\mu^2$ to $\mu^3$ (with base $\mu^1$) is defined as
\begin{equation}
\mu_t^{2\to 3} = (\boldsymbol{t}_t^{2\to 3})_\# \mu^1 \quad\text{where}\quad \boldsymbol{t}_t^{2\to 3} := (1-t) \boldsymbol{t}^2 + t\hspace{0.5mm}\boldsymbol{t}^3, \quad t\in[0,1].
\label{def_general_geo}
\end{equation}
\label{def:geodesics}
\end{definition}
 Using the notion of generalized geodesics, one can define a notion of semi-convexity (or {\it $\lambda$-convexity}) for energy functionals on $\mathcal{P}_2(\mathbb{R}^d)$:

\begin{definition}[$\lambda$-convexity along generalized geodesics]
Given $\lambda \in \mathbb{R}$, a functional $E$ is called \emph{$\lambda$-convex along the generalized geodesics} if for any $\mu_1, \mu_2$ and $\mu_3$ satisfying the conditions in Definition \ref{def_general_geo}, the following inequality holds
$$E[\mu_t^{2\to 3}] \leq \mathcal (1-t) E[\mu_2] + t E[\mu_3] - \frac{\lambda}{2} t(1-t) \int_{\mathbb{R}^d} |\boldsymbol{t}^2 - \boldsymbol{t}^3|^2 d\mu_1 \text{ for all }0\leq t\leq 1,$$
where $\mu_t^{2\to 3}$, $\boldsymbol{t}^2$ and $\boldsymbol{t}^3$ are as defined in Definition \ref{def:geodesics}.
\end{definition}

The following Lemma is a direct consequence of \cite[Sec 9.3]{ags}, which says that as long as $\Phi$ is semi-convex, the functional $E_m$ would be convex for all $1<m\leq \infty$. Since the case $m=\infty$ is not directly covered in the book, we provide a short proof below for the sake of completeness.

\begin{lemma}[\cite{ags}]
Let $\Phi$ satisfy \textup{(\textbf{A3})} , and let $E_m:\mathcal{P}_2(\mathbb{R}^d)\to \R$ be as defined as in \eqref{def:E_m}. Then $E_m$ is  $\lambda$-convex along general geodesics for all $1<m\leq \infty$.
\end{lemma}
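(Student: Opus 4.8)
The plan is to decompose $E_m = \mathcal{S}_m + \mathcal{V}$, where $\mathcal{V}[\rho] = \int_{\mathbb{R}^d}\rho\Phi\,dx$ is the potential energy, and to show that each piece is convex along generalized geodesics with the appropriate modulus. For the potential part, I would invoke Proposition 9.3.2 of \cite{ags}, which states that if $\Phi$ is $\lambda$-convex in the usual sense on $\mathbb{R}^d$ (which is exactly (\textbf{A3})), then $\mathcal{V}$ is $\lambda$-convex along every generalized geodesic $\mu_t^{2\to 3} = (\boldsymbol{t}_t^{2\to 3})_\#\mu^1$: indeed $\mathcal{V}[\mu_t^{2\to 3}] = \int \Phi\big((1-t)\boldsymbol{t}^2 + t\boldsymbol{t}^3\big)\,d\mu^1$, and applying the pointwise inequality $\Phi((1-t)a+tb)\le (1-t)\Phi(a)+t\Phi(b) - \tfrac{\lambda}{2}t(1-t)|a-b|^2$ under the integral sign gives the claim. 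For the internal energy part, I would use the McCann-type convexity criterion (Proposition 9.3.9 of \cite{ags}): the functional $\mathcal{S}_m[\rho] = \int \tfrac{1}{m}\rho^m\,dx$, being of the form $\int F(\rho)\,dx$ with $F(s) = s^m/m$, is convex (i.e. $0$-convex) along generalized geodesics provided the map $s\mapsto s^d F(s^{-d})$ is convex and nonincreasing on $(0,\infty)$; for $F(s)=s^m/m$ this function is $s^{d(1-m)}/m$, which for $m>1$ is convex and decreasing, so $\mathcal{S}_m$ is $0$-convex along generalized geodesics. Adding the two contributions yields that $E_m$ is $\lambda$-convex along generalized geodesics for every finite $m\in(1,\infty)$.

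The remaining, and only genuinely new, case is $m=\infty$, where $\mathcal{S}_\infty$ is the hard indicator $\mathcal{S}_\infty[\rho]=0$ if $\|\rho\|_\infty\le 1$ and $+\infty$ otherwise. Here I would argue directly that $\mathcal{S}_\infty$ is convex (hence $0$-convex) along generalized geodesics. Given a generalized geodesic with base $\mu^1$ regular and endpoints $\mu^2,\mu^3$ satisfying $\|\mu^2\|_\infty\le 1$ and $\|\mu^3\|_\infty\le 1$ (otherwise the right-hand side of the convexity inequality is $+\infty$ and there is nothing to prove), I must show $\|\mu_t^{2\to 3}\|_\infty\le 1$ for all $t\in[0,1]$. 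The cleanest route is to pass through the finite-$m$ functionals: by Proposition 9.3.9 of \cite{ags} the $L^p$ norm $\rho\mapsto\|\rho\|_{L^p}$ is convex along generalized geodesics for every $p\in(1,\infty)$ (this is the content of the statement used in the proof of Proposition~\ref{prop:one_step_m_infty}), so
$$
\|\mu_t^{2\to 3}\|_{L^p} \le (1-t)\|\mu^2\|_{L^p} + t\|\mu^3\|_{L^p} \le (1-t)\|\mu^2\|_{L^p} + t\|\mu^3\|_{L^p}.
$$
Since $\mu^2,\mu^3$ are probability densities with $L^\infty$ norm at most $1$, their $L^p$ norms are uniformly bounded by $1$ for every $p$ (as $\|\mu^i\|_{L^p}^p = \int (\mu^i)^p \le \int \mu^i = 1$), hence $\|\mu_t^{2\to 3}\|_{L^p}\le 1$ for all $p\in(1,\infty)$; letting $p\to\infty$ gives $\|\mu_t^{2\to 3}\|_{L^\infty}\le 1$. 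Therefore $\mathcal{S}_\infty[\mu_t^{2\to 3}] = 0 = (1-t)\mathcal{S}_\infty[\mu^2] + t\mathcal{S}_\infty[\mu^3]$, so $\mathcal{S}_\infty$ is $0$-convex along generalized geodesics, and adding the $\lambda$-convex potential term $\mathcal{V}$ finishes the case $m=\infty$.

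The main obstacle — and it is a mild one — is the $m=\infty$ argument: one cannot directly differentiate or use a pointwise convexity inequality because the integrand is a $\{0,+\infty\}$-valued indicator, so one needs the indirect $L^p$-interpolation passage above (or, alternatively, a direct measure-theoretic argument that the pushforward of a density under the interpolating map $(1-t)\boldsymbol{t}^2+t\boldsymbol{t}^3$ stays bounded by $1$, which is essentially what Proposition 9.3.9 encodes). Everything else is a direct citation of the convexity results in \cite[Sec 9.3]{ags} together with the elementary observation that sums of $\lambda_1$- and $\lambda_2$-convex functionals are $(\lambda_1+\lambda_2)$-convex, specialized here to $\lambda_1=\lambda$ (potential) and $\lambda_2=0$ (internal energy).
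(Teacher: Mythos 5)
Your proposal is correct and follows essentially the same route as the paper: Proposition 9.3.2 of \cite{ags} for the potential term, Proposition 9.3.9 for $\mathcal{S}_m$ with $m<\infty$, and for $m=\infty$ the same passage through the convexity of the $L^p$ norms along generalized geodesics followed by $p\to\infty$. The only differences are cosmetic (the paper bounds $\|\mu_t^{2\to3}\|_{L^p}$ by the minimum of the endpoint norms rather than by $1$, and your displayed inequality repeats its right-hand side), so there is nothing substantive to add.
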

\begin{proof}
Due to $(\textbf{A3})$, Proposition 9.3.2 of \cite{ags} gives the $\lambda$-convexity of the functional $\int_{\mathbb{R}^d} \rho \Phi dx$ along generalized geodesics. 

For a finite $m>1$, let $S_m$ be given by \eqref{def:S_m}. One can directly apply Proposition 9.3.9 in \cite{ags} to obtain the convexity of $\mathcal{S}_m$ along generalized geodesics. Since the sum of two $\lambda$-convex functionals is still $\lambda$ convex, we obtain the $\lambda$-convexity of $E_m$ for any finite $m>1$.

It remains to check that the functional $\mathcal{S}_\infty$ defined in \eqref{def:S_infty} is also $\lambda$-convex along generalized geodesics. To do this, let $\mu^i$, $i=1,2,3$ be as given in Definition \ref{def_general_geo}. It suffices to show that if $\|\mu^i\|_{L^{\infty}} \leq 1$ for $i=2,3$, then $\|\mu_t^{2\to 3} \|_{L^\infty} \leq 1$ for all $0<t<1$ as well.  Note that due to the $\lambda$-convexity of $S_m$ for all $m>1$, we obtain 
$$\|\mu_t^{2\to 3}\|_{L^m} \leq \min\{\|\mu^2\|_{L^m}, \|\mu^3\|_{L^m}\} \text{~~for all }m>1,$$
and sending $m\to \infty$ immediately yields the desired result.
\end{proof}

Once we have the $\lambda$-convexity of $E_m$, Lemma 9.2.7 in \cite{ags} guarantees that the Assumption 4.0.1 in \cite{ags} is satisfied, which leads to the existence, uniqueness and convergence results in Theorem \ref{ags}.

\end{document}